\RequirePackage{snapshot}
\documentclass[11pt]{article}
\usepackage{microtype}
\usepackage{units}
\usepackage{amsmath}
\usepackage{amsthm}
\usepackage{amssymb}
\usepackage{mathtools}
\usepackage[margin=1in]{geometry}
\usepackage{color}
\usepackage{url}
\usepackage{multicol}
\usepackage[hidelinks]{hyperref}
\usepackage{graphicx}
\usepackage{caption}
\usepackage{subcaption}
\usepackage{booktabs}
\usepackage{pgfplots}\pgfplotsset{compat=1.4}
\usepackage{subdepth}
\usepackage[sort,numbers,longnamesfirst]{natbib}

\newcommand{\inpar}[1]{ \left( #1 \right) } %

\newcommand{\bigo}[1]{\cO\!\left( #1 \right)}

\newcommand{\st}{\mathop{\hbox{subject to}}}
\newcommand{\gam}{\gamma}
\newcommand{\Diag}{\mbox{\rm Diag}\,}

\newcommand{\lam}{\lambda}

\newcommand{\di}{\hbox{dist}}
\newcommand{\sig}{\sigma}
\newcommand{\alf}{\alpha}
\newcommand{\del}{\delta}

\newcommand{\vp}{\varphi}
\newcommand{\vpen}{\vp_{en}}

\newcommand{\epi}{\mathrm{epi}\,}

\newcommand{\norm}[1]{\| #1 \|} %

\newcommand{\tnorm}[1]{\norm{#1}_2}
\newcommand{\inorm}[1]{\norm{#1}_\infty}
\newcommand{\onorm}[1]{\norm{#1}_1}

\newcommand{\ip}[2]{\left\langle #1,\, #2\right\rangle}
\newcommand{\set}[2]{\left\{#1\,\left|\ #2\right.\right\}}

\newcommand{\submax}{_{\max}}
\newcommand{\submin}{_{\min}}
\newcommand{\dom}[1]{\mathrm{dom}\,#1}

\newcommand{\cl}[1]{\mathrm{cl}\left(#1\right)}
\newcommand{\argmin}{\mathop{\mathrm{argmin}}}

\newcommand{\ball}{\bB}
\newcommand{\e}{\mathrm{e}}

\newcommand{\range}{\mathrm{range}\,}

\newcommand{\half}{\tfrac{1}{2}}

\newcommand{\abs}[1]{ \left| #1 \right| } %
\newcommand{\nucnorm}[1]{ \left| \left| #1 \right| \right|_*} %
\newcommand{\twonorm}[1]{ \left| \left| #1 \right| \right|_2 } %

\newcommand{\sign}{\mathsf{sgn}}

\DeclareMathOperator{\interior}{int}

\newcommand{\extR}{\overline\bR} %
\newcommand{\bR}{\mathbb{R}}

\newcommand{\Rn}{\bR^n}

\newcommand{\bB}{\mathbb{B}}
\newcommand{\cP}{\mathcal{P}}
\newcommand{\cQ}{\mathcal{Q}}

\newcommand{\cA}{{\mathcal{A}}}
\newcommand{\cD}{{\mathcal{D}}}

\newcommand{\cK}{{\mathcal{K}}}
\newcommand{\cS}{\mathcal{S}}

\newcommand{\cO}{\mathcal{O}}

\newcommand{\tr}{\mathrm{tr}\,}
\newcommand{\cC}{\mathcal{C}}

\newcommand{\cX}{\mathcal{X}}

\newcommand{\OPT}{\hbox{OPT}}
\newcommand{\chat}{\skew3\widehat c}
\newcommand{\yhat}{\skew2\widehat y}

\newcommand{\textt}[1]{\hbox{\qquad#1\qquad}}

\def\minim{\mathop{\hbox{minimize}}}
\def\maxim{\mathop{\hbox{maximize}}}
\def\minimize#1{\displaystyle\minim_{#1}}
\def\maximize#1{\displaystyle\maxim_{#1}}
\newcommand{\T}{^T\!}

\newcommand{\QPlam}{QP$_{\lambda}$}
\newcommand{\LStau}{LS$_{\tau}$}
\newcommand{\BPsig}{BP$_{\sigma}$}
\newcommand{\Psig}{\ensuremath{\cP_\sigma}}
\newcommand{\Qtau}{\ensuremath{\cQ_\tau}}

\theoremstyle{plain} %
\newtheorem{theorem}{Theorem}[section]
\newtheorem{lemma}[theorem]{Lemma}

\newtheorem{proposition}[theorem]{Proposition}

\theoremstyle{definition}
\newtheorem{defn}[theorem]{Definition}

\newtheorem{rem}[theorem]{Remark}

\theoremstyle{definition} %
\newtheorem{exa}[theorem]{Example}

\numberwithin{equation}{section}

\usepackage[vlined,ruled]{algorithm2e}
\SetKwComment{Comment}{[}{]}

\title{Level-set methods for convex optimization}

\author{Aleksandr Y. Aravkin\thanks{Department of Applied Mathematics,
    University of Washington, Seattle, WA 98195, USA;
    \url{sites.google.com/site/saravkin/}; Research supported by the
    Washington Research Foundation Data Science Professorship.}  \and
  James V. Burke\thanks{Department of Mathematics, University of
    Washington, Seattle, WA 98195, USA;
    \url{www.math.washington.edu/\~burke/ };
    Research supported in part by the NSF award DMS-1514559.}\and Dmitriy
  Drusvyatskiy\thanks{Department of Mathematics, University of
    Washington, Seattle, WA 98195, USA;
    \url{www.math.washington.edu/\~ddrusv/}; Research supported by the
    AFOSR YIP award FA9550-15-1-0237.} \and Michael
  P. Friedlander\thanks{Department of Mathematics, UC Davis, One
    Shields Ave, Davis, CA 95616; \url{www.math.ucdavis.edu/\~mpf/ };
    Research supported by the ONR award N00014-16-1-2242.}
  \and Scott Roy\thanks{Department of Mathematics, University of
    Washington, Seattle, WA 98195, USA; Research supported in part by
    the AFOSR YIP award FA9550-15-1-0237.}}

\begin{document}

\date{February 3, 2016}

\maketitle

\begin{quote}
  \begin{center}\bf Abstract \end{center}
  Convex optimization problems arising in applications often have
  favorable objective functions and complicated constraints, thereby precluding 
  first-order methods from being immediately applicable. We
  describe an approach that exchanges the roles of the objective and
  constraint functions, and instead approximately solves a sequence of
  parametric level-set problems.
  A zero-finding procedure, based on inexact function evaluations and
  possibly inexact derivative information, leads to an efficient
  solution scheme for the original problem.  We describe the
  theoretical and practical properties of this approach for a broad
  range of problems, including low-rank semidefinite optimization, sparse optimization, and generalized linear models 
  for inference.
\end{quote}

\clearpage
\tableofcontents

\clearpage
\section{Introduction}
\label{sec:intro}
To motivate the discussion, consider the typical problem 
of recovering a sparse vector $x$ that
approximately satisfies the linear system $Ax=b$. This
task often arises in applications, such as compressed sensing and model
selection.  Standard approaches, based on convex optimization, rely on
solving one of the following problem formulations.
\begin{center}
\begin{tabular}{c@{\qquad}c@{\qquad}c}
\toprule
 \BPsig & \LStau & \QPlam
 \\\midrule
  $\begin{array}[t]{cl}
        \displaystyle\min_{x} & \|x\|_{1}
       \\\textrm{s.t.} & \frac{1}{2}\|Ax-b\|^2_{2}\le\sigma
         \end{array}$
&$\begin{array}[t]{cl}
         \displaystyle\min_{x} & \frac{1}{2}\|Ax-b\|^2_{2}
       \\\textrm{s.t.} & \|x\|_{1}\le\tau
         \end{array}$
 & $\begin{array}[t]{cl}
         \displaystyle\min_{x} & \frac{1}{2}\|Ax-b\|^2_2+\lambda \|x\|_1
         \end{array}$
\\\bottomrule
\end{tabular}
\end{center}

\noindent Computationally, \BPsig\ is perceived to be the most
challenging of the three because of the complicated geometry of the
feasible region. For example, a projected- or proximal-gradient method
for \LStau\ or \QPlam\ requires relatively little cost per
iteration\footnote{Projection onto the ball $\{x:\norm{x}_1\le\tau\}$
  requires $\bigo{n\log n}$ operations; the proximal map for the
  function $\lambda\norm{x}_1$ requires $\bigo{n}$ operations.} beyond
forming the product $Ax$ or $A\T y$. In contrast, a comparable
first-order method for \BPsig, such as the alternating direction
method of multipliers
(ADMM)~\cite{GabayMercier:76,boyd2011distributed}, requires at each
iteration the solution of a linear least-squares
problem~\cite{bioucas2010alternating} and maintains iterates that are
both infeasible and suboptimal.  Consequently, problems \LStau\ and
\QPlam\ are most often solved in practice, and most algorithm
development and implementation targets these versions of the problem.
Nevertheless, the formulation \BPsig\ is often more natural, since the
parameter $\sigma$ plays an entirely transparent role, signifying an
acceptable tolerance on the data misfit.

This paper targets optimization problems generalizing the formulation \BPsig. Setting the stage, consider the pair of problems 
\begin{equation}
\label{eqn:ps}
\minimize{x\in\cX} \quad \varphi(x)
\quad\st\quad\rho(Ax-b)\leq \sigma,
\tag{\Psig} 
\end{equation}
and 
\begin{equation}
\label{eqn:sp}
\minimize{x\in\cX} \quad \rho(Ax-b)
\quad\st\quad\varphi(x)\leq \tau
\tag{\Qtau},
\end{equation}
where $\cX$ is a closed convex set,
$\varphi$ and
$\rho$ are (possibly infinite-valued) closed convex functions, and
$A$ is a linear map. Here, \Psig\ and \Qtau\ extend the problems \BPsig\ and \LStau\,, respectively. 
Such formulations are ubiquitous in contemporary optimization and its applications. %
Our working assumption is that 
the level-set problem \ref{eqn:sp} is easier to solve than
\ref{eqn:ps}---perhaps because it allows for a specialized algorithm
for its solution. In \S\ref{sec:pr_class}, we discuss a range of problems,
including nonsmooth regularization, conic optimization, and
generalized linear models, with this property.

Our main goal is to develop a practical and theoretically sound
algorithmic framework that can be used to harness existing algorithms
for \Qtau\ to efficiently solve the \Psig\ formulation.  As a
consequence, we make explicit the fact that in typical circumstances both problems are
essentially equivalent from the viewpoint of computational
complexity. Hence, there is no reason to avoid any one formulation
based on computational considerations alone.  This observation is very
significant in applications since, although the formulations \Psig\
and \Qtau\ as well as their Lagrangian (or penalty) formulation are,
in a sense, mathematically and computationally equivalent, they are
far from equivalent from a modeling perspective. To illustrate this
point, consider a scenario where we wish to compare the performance of
various regularizers $\varphi_j,\,j=1,\dots,k,$ for a range of values
of the model misfit $\rho(Ax-b)\le \sig_i,\,i=1,\dots,p$.  This is an
important task in machine learning applications where one wishes to
build a classifier based on training data.  In this scenario, the model
formulation \Psig\ is the only one that allows an apples-to-apples
comparison between regularizers $\varphi_i$ for a fixed level of
model misfit. We illustrate this point in \S\ref{sec:learning} on a regularized logistic regression problem.

\subsection{Approach}

The proposed approach, which we will formalize shortly, approximately solves \Psig\ in the sense that it generates a point
$x\in \mathcal{X}$ that is \emph{super-optimal} and
\emph{$\epsilon$-feasible}: 
\[
  \varphi(x)\leq \text{OPT} \textt{and} \rho(Ax-b)\leq \sigma+\epsilon,
\]
where OPT is the optimal value of \Psig. This terminology is used by Harchaoui, Juditsky, and Nemirovski~\cite{cond_grad}, and we adopt it here.
 The proposed strategy is based on
exchanging the roles of the objective and constraint functions in
\Psig, and approximately solving a sequence of level-set problems
\ref{eqn:sp} for varying parameters $\tau$.

How does one use approximate solutions of \Qtau\ to obtain a
super-optimal and $\epsilon$-feasible solution of \Psig, the target
problem?  We answer this by recasting the problem in terms of the value function
for \Qtau:
\begin{equation}\label{eq:value-func}
  v(\tau):=\min_{x\in\cX}\set{\rho(Ax-b)}{\varphi(x)\leq \tau}\ .
\end{equation}
The univariate function $v$ thus defined is nonincreasing and convex
	\cite[Theorem~5.3]{RTR}.	
Under the mild assumption that 
the constraint $\rho(Ax-b)\le\sigma$  is active
at any optimal solution of \ref{eqn:ps},
it is easy to see that
the value $\tau_*:=\mbox{OPT}$ satisfies the equation 
\begin{equation}\label{eqn:root}
v(\tau)=\sigma.
\end{equation}
Conversely, it is immediate that for any $\tau \le \tau_* $ satisfying 
$v(\tau) \le \sigma + \epsilon$,
solutions of \Qtau\ are super-optimal and $\epsilon$-feasible for
\Psig, as required.
In summary, we have translated the problem \Psig\ to that of
finding the minimal root of the nonlinear univariate equation \eqref{eqn:root}.
We show in \S\ref{sec:alg_fram} how approximate solutions of \Qtau\ can serve
as the basis of a root-finding procedure for this key equation. 
For more details about the relationship between \Psig, \Qtau, and their value functions, see Aravkin, Burke, and Friedlander
\cite[Theorem~2.1]{AravkinBurkeFriedlander:2013}.

Our technical assumptions on the problem \Psig\ are relatively few,
and so in principle the approach applies to a wide class of convex
optimization problems. In order to make this scheme practical,
however, it is essential that approximate solutions of
\Qtau\ 
 can be efficiently computed over a sequence of parameters $\tau$.
Hence, efficient implementations attempt to warm start
each new problem. It is thus desirable that the sequence of parameters
$\tau_{k}$ increases monotonically, since this guarantees that the
approximate solutions of $\mathcal{Q}_{\tau_k}$ are feasible for the next
problem in the sequence.  Bisection methods do not have this property,
and we therefore propose variants of secant and Newton methods that
accommodate inexact oracles for $v$ and exhibit the desired
monotonicity property. We prove that the resulting root-finding
procedures unconditionally have a global linear rate of convergence.
Coupled with an evaluation oracle for $v$ that has a cost that is
sublinear in $\epsilon$, we obtain an algorithm with an overall
cost that is also sublinear in $\epsilon$ (modulo a logarithmic
factor).

The outline of the manuscript is as follows. In \S\ref{sec:alg_fram},
we prove complexity bounds and convergence guarantees for the
level-set scheme. We note that the iteration bounds for the root
finding schemes are independent of the slope of $v$ at the root. This
implies that the proposed method is insensitive to the ``width'' of
the feasible region in \Psig.  Such methods are well-suited for
problems \Psig\ for which the Slater constraint qualification fails or
is close to failing; see Example~\ref{ex:edcp}. In \S\ref{sec:refinement},
we consider refinements to the overall method, focusing on linear
least-squares constraints and recovering feasibility.
Section~\ref{sec:pr_class} explores level-set methods in notable
optimization domains, including semi-definite programming, gauge optimization, regularized regression, and generalized linear models.
In \S\ref{case:stud}, we describe the specific steps needed to
implement the root-finding approach for some representative
applications, including low-rank matrix completion 
\cite{RecFazPar:10,ling:2015}, sensor-network localization
\cite{bis_ye_toh_wang,biswas2004semidefinite,biswas2006semidefinite},
and group detection via the elastic net~\cite{EN_2005}.

\subsection{Related work}
\label{sec:related-work}

The intuition behind the proposed framework has a distinguished
history, appearing even in antiquity. Perhaps the earliest instance is Queen Dido's problem and the
fabled origins of Carthage~\cite[Page 548]{EM2001}.  In short, the
problem is to find the maximum area that can be enclosed by an arc of
fixed length and a given line. The converse problem is to find an arc
of least length that traps a fixed area between a line and the
arc. Although these two problems reverse the objective and the constraint, 
the solution in each case is a
semi-circle. The interchange of constraint and
objective provides the foundation for the Markowitz mean-variance
portfolio theory~\cite{Mark1987}; the basic problem is to choose a
portfolio of financial instruments having a lower-bounded rate of
return that minimizes the volatility (variance) of the portfolio. The
converse problem is to maximize the rate of return with a bound on
volatility. Numerous other examples occur throughout history, and the
great variety of possible modern applications is formalized by the
inverse function theorem in Aravkin et al.~\cite[Theorem
2.1]{AravkinBurkeFriedlander:2013}.  More generally, the underlying
idea of the trade-offs between various objectives form the foundations
for multi-objective optimization~\cite{Mie99}.

In the context of numerical optimization, our work is motivated by the
widely-used SPGL1 algorithm
\cite{BergFriedlander:2011,BergFriedlander:2008} for the 1-norm
regularized least-squares problem and its extensions
\cite{AravkinBurkeFriedlander:2013}.  A shortcoming of the numerical
theory to date is the absence of practical complexity and convergence
guarantees.  In this work, we $(i)$ take a fresh new look at this
general framework, $(ii)$ provide rigorous convergence guarantees,
$(iii)$ further illustrate the vast applicability of the approach, and
$(iv)$ show how the proposed framework can be instantiated in concrete
circumstances.

Related ideas appear in Lemar\'echal, Nemirovskii, and Nesterov
\cite{preamb}, who develop their \emph{level} and \emph{truncated
  level} methods using bundle ideas for convex optimization
\cite{Lem75,Wol75}.  Their algorithm is similar in spirit since they
work with lower-level sets of the objective function.  They consider
the convex optimization problem
\[
  \minimize{x\in \mathcal{X}}
  \quad f_0(x)
  \quad\st\quad \mbox{$f_j(x)\le 0$ for $j=1,\dots,m$},
\]
where each function $f_j$ is convex and $\mathcal{X}$ is a nonempty
closed convex set.  The authors define the function
\[
g(\tau):=\min_{x\in \mathcal{X}}\ \max\ \{f_0(x)-\tau,\, f_1(x),\dots,\, f_m(x)\}.
\]
Their algorithm constructs the smallest solution $\tau_*$ to the
equation $g(\tau)=0$; then $\tau_*$ is the optimal value of the
original convex program. See also Nesterov~\cite[\S3.3.4]{Nes04b} for
a discussion.

More recently, Harchoui et al.~\cite{cond_grad}, in a paper inspired
by Lemar\'echal et al.~\cite{preamb}, present an algorithm focusing on
instances of the problem \Psig, where $\rho$ is smooth and $\varphi$
is a gauge of the intersection of a unit ball for a norm and a closed convex
cone. Their zero-finding method is coupled with the Frank-Wolfe
algorithm for generating lower bounds and affine minorants on the
value function.  In contrast, our root finding phase is agnostic to
the inner evaluation algorithm, as is the case in the approaches
described by Aravkin et al.~\cite{AravkinBurkeFriedlander:2013} and
van den Berg and
Friedlander~\cite{BergFriedlander:2011,BergFriedlander:2008}. Consequently,
we see that affine minorants are naturally obtained from dual
certificates in full generality. This is in particular the case for the affine minorants
derived from the Frank-Wolfe algorithm; see~\S\ref{sec:lower minorants via
  duality}.  This observation immediately opens the door to the use of other
primal-dual algorithms, and more generally, to algorithms for solving
the primal and dual problems in parallel. %

\subsection{Notation}
\label{sec:notation}
The notation we use is standard, and follows closely that in
Rockafellar's monograph \cite{RTR}.  The functions we consider take
values in the extended real line $\extR := \bR\cup\{+\infty\}$. For
any function $f\colon\Rn\to\extR$, we use the symbol
$[f \le \alpha] := \{x\in\Rn\colon f(x)\le\alpha\}$ to denote the
$\alpha$-sublevel set. The {\em domain} and the {\em epigraph} of $f$
are defined by
\begin{align*}
\dom f&:=\{x\in\Rn \colon f(x)< +\infty\}\quad\quad \textrm{ and }\quad\quad
\epi f:=\{(x,r)\in\Rn\times \mathbb{R}\colon r\geq f(x)\},
\end{align*}
respectively. We say that $f$ is closed if its epigraph $\epi f$ is a
closed set.  An {\em affine minorant} of $f$ is any affine function
$g$ satisfying $g(x)\leq f(x)$ for all $x$.  The {\em subdifferential}
of a convex function $f\colon\Rn\to\extR$ at a point $x\in \dom f$ is
the set
\[
\partial f(x):=
\set{v\in\bR^n}{f(y)\ge f(x)+\ip{v}{y-x} \mbox{ for all } y\in\bR^n}.
\]
The {\em Fenchel conjugate} of $f$ is the closed, convex function
$f^{\star}\colon\Rn\to\extR$ defined by
\[
 f^{\star}(y):=\sup_x \,\left\{\ip{x}{y}-f(x)\right\}.
\]
The subdifferential and the conjugate of a convex function $f$ are
related by the {\em Fenchel-Young inequality}: any two points $x$ and
$y$ satisfy the inequality
$$f(x)+f^{\star}(y)\geq \langle y,x\rangle.$$
Moreover, equality holds if and only if $y\in \partial f(x)$.
For any set $\cC$ in $\Rn$, we define the associated indicator
function
\[
 \delta_\cC(x) =
 \begin{cases}
    0      & \hbox{if $x\in\cC$},
 \\+\infty & \hbox{otherwise.}
 \end{cases}
\]
The conjugate of the indicator function is simply the support function
$\delta^{\star}_\cC(y) = \sup_{x\in\cC}\,\ip{x}{y}$. In particular,
for any norm $\norm{\cdot}$, the support function of the unit ball
$\{x \colon \norm{x}\leq 1\}$ is the dual norm. The $p$-norms and
corresponding closed unit balls are denoted by $\|\cdot\|_p$ and
$\bB_{p}$, respectively. For any convex cone $\cK$, the {\em dual cone}
is defined by
\[
  \cK^*:=\set{y}{\langle x,y\rangle\geq 0\mbox{ for all } x\in \cK}.
\]

We always endow the Euclidean space of real $m\times n$ matrices
$\mathbb{R}^{m\times n}$ with the trace product
$\langle X,Y\rangle:=\tr(X^TY)$ and the induced Frobenius norm
$\|X\|_F:=\sqrt{\langle X,X \rangle}$. For any matrix
$X\in \mathbb{R}^{m\times n}$, the symbols
$\sigma_1(X)\geq \sigma_2(X)\geq \cdots\geq \sigma_{\min\{m,n\}}(X)$
denote the singular values of $X$.  The Euclidean space of real
$n\times n$ symmetric matrices, written as $\mathcal{S}^n$, inherits
the trace product $\langle X,Y\rangle:=\tr(XY)$ and the corresponding
norm. For any symmetric matrix $X\in \mathcal{S}^{n}$, the symbols
$\lambda_1(X)\geq \lambda_2(X)\geq \cdots\geq \lambda_{n}(X)$ denote
the eigenvalues of $X$.  The closed, convex cone of $n\times n$
positive semi-definite matrices is denoted by
$\mathcal{S}^n_+=\{X\in \mathcal{S}^n: X\succeq 0\}$.
Both the nonnegative orthant $\Rn_+$ and the positive semi-definite
cone $\mathcal{S}^n_+$ are self-dual. The symbol $e\in \Rn$ denotes
the vector of all ones.

\section{Root-finding with inexact oracles}
\label{sec:alg_fram}

Approximate solutions of \Qtau\ are central to our algorithmic
framework, since this is the oracle through which we access $v$. The
available algorithms for \Qtau\ dictate the quality of the oracle. In
this section, we describe the complexity guarantees associated with
two types of oracles: an inexact-evaluation oracle that provides upper
and lower bounds on $v(\tau)$, and an affine minorant oracle that
additionally provides a global linear underestimator on $v$.  The
algorithms presented here apply to any convex nonincreasing function
$f:\bR_+\to\bR$ for which the equation $f(\tau)=0$ has a solution.  In
the following discussion, $\tau_*$ denotes a minimal root of
$f(\tau)=0$.  Given a tolerance $\epsilon >0$, the algorithms we
discuss yield a point $\tau\leq \tau_*$ satisfying
$0\leq f(\tau)\le\epsilon$.

\subsection{Inexact secant}\label{subsec:inexact}

Our first root-finding algorithm is an inexact secant method, and is
based on an oracle that provides upper and lower bounds on the value
$f(\tau)$. 

\begin{defn}[Inexact evaluation oracle]\label{defn:oracle}
  For a function $f:\bR_+\to\bR$, an \emph{inexact evaluation oracle}
  is a map $\cO_f$ that assigns to each pair
  $(\tau,\alpha)\in[f > 0]\times [1,\infty)$ real numbers $(\ell,u)$
  such that $0 < \ell\leq f(\tau)\leq u$ and $u/\ell \leq \alpha$.
\end{defn}

Note that this oracle guarantees a relative accuracy
$u/\ell\leq \alpha$, rather than one based on the absolute gap
$u-\ell$.  This allows the oracle to be increasingly inexact (and
presumably cheaper) for larger values of $f(\tau)$. 
The relative-accuracy condition is no less general than one
based on an absolute gap. In particular, it is readily verified that
for any numbers $l, u$ that satisfy
$0\leq \ell \leq f(\tau)\leq u$ and
$u-\ell\leq (1-1/\alpha)\epsilon$, either
\begin{itemize}
\item $\tau$ is an $\epsilon$-approximate root, i.e.,
  $f(\tau)\leq \epsilon$; or
\item the relative-accuracy condition $1\leq u/\ell\leq \alpha$ is valid.
\end{itemize}
Indeed, provided $f(\tau)> \epsilon$, we deduce
$u/\ell\leq 1+(1-1/\alpha)\epsilon/\ell\leq 1+(1-1/\alpha)u/\ell$,
which after rearranging terms yields the desired inequality
$u/\ell\leq \alpha$.  Hence, the cost of evaluating $f(\tau)$ within
an additive error directly translates into a cost of the same order
for evaluating $f(\tau)$ up to relative accuracy.
Algorithm~\ref{algo:secant} outlines a secant method based on the
inexact evaluation oracle. Theorem~\ref{thm:lin_conv_inex_imp}
establishes the corresponding global convergence guarantees; the proof appears in Appendix~\ref{appendix:proofs}.

\begin{algorithm}[t]
  \DontPrintSemicolon \KwData{A decreasing convex function
    $f:\bR_+\to\bR$ via an inexact evaluation oracle $\mathcal{O}_f$;
    target accuracy $\epsilon >0$; initial points $\tau_0,\tau_1$ with
    $0\le \tau_0<\tau_1$ such that $f(\tau_0)\geq f(\tau_1) > 0$;
    constant $\alpha\in (1,2)$.}

  $(\ell_0,u_0) \gets \mathcal{O}_f(\tau_0,\alpha)$\;
  $k\gets1$\;

  \While{$u_{k} > \epsilon$}{
    $(\ell_{k},u_{k})\gets\mathcal{O}_f(\tau_{k},\alpha)$
      \Comment*{oracle evaluation for lower/upper bounds}
    $u_k\gets \min\{u_k,u_{k-1}\}$
      \Comment*{ensure upper bound decreases}
    $s_k \gets (u_{k-1}- \ell_k)/(\tau_{k-1}-\tau_k)$
      \Comment*{slope of linear approximation}
    $\tau_{k+1} \gets \tau_k - \ell_k/s_k$
      \Comment*{secant iteration}
    $k\gets k+1$\;
  }
  \Return $\tau_{k}$\;
  \caption{Inexact secant method}
  \label{algo:secant}
\end{algorithm}

\begin{theorem}[Linear convergence of the inexact secant
  method]\label{thm:lin_conv_inex_imp}
  The inexact secant method (Algorithm~\ref{algo:secant}) terminates
  after at most
  \[
    k\leq \max \left\{ 2 + \log_{2/\alpha}\!
      \left( 2 C/\epsilon \right),\, 3 \right\}
  \]
  iterations, where $C := \max\{ |s_1|(\tau_*-\tau_1),\, \ell_1 \}$
  and $s_1:=(u_{0}- \ell_1)/(\tau_{0}-\tau_1)$.
\end{theorem}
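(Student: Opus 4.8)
The plan is to establish three facts in turn: that the iterates stay admissible and increase monotonically to $\tau_*$ from below, that the upper bounds $u_k$ obey an exact one-step recursion, and that this recursion forces $(\alpha/2)$-geometric decay of $u_k$. For \emph{feasibility} I would induct to show $\tau_k<\tau_*$ (so $f(\tau_k)>0$ and the oracle is well defined) and $\tau_k<\tau_{k+1}\le\tau_*$. The sign $s_k<0$ is forced by $f$ nonincreasing with $\tau_{k-1}<\tau_k$, since then $u_{k-1}\ge f(\tau_{k-1})>f(\tau_k)\ge\ell_k$; together with $\ell_k>0$ this gives $\tau_{k+1}=\tau_k-\ell_k/s_k>\tau_k$. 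The upper bound $\tau_{k+1}\le\tau_*$ is the convexity step: the line $L_k$ through $(\tau_{k-1},u_{k-1})$ and $(\tau_k,\ell_k)$ lies below $f$ on $[\tau_k,\infty)$, because its left value overestimates $f(\tau_{k-1})$ while its right value underestimates $f(\tau_k)$, so it is at least as steep as the true chord, which itself underestimates $f$ to the right by convexity. Evaluating at $\tau_*$ then gives $L_k(\tau_*)\le f(\tau_*)=0$, and since $L_k$ is decreasing with root $\tau_{k+1}$ this is equivalent to $\tau_{k+1}\le\tau_*$. On termination $u_k\le\epsilon$, so $f(\tau_k)\le u_k\le\epsilon$ while $\tau_k\le\tau_*$ forces $f(\tau_k)\ge0$, giving the claimed $0\le f(\tau_k)\le\epsilon$.

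Next I would record the \emph{exact recursion}. Writing $a_k:=\tau_k-\tau_{k-1}$ and $b_k:=\tau_{k+1}-\tau_k$, the three points $(\tau_{k-1},u_{k-1})$, $(\tau_k,\ell_k)$, $(\tau_{k+1},0)$ are collinear on $L_k$ by construction, so similar triangles give the identity $\ell_k=\tfrac{b_k}{a_k+b_k}\,u_{k-1}$. Combined with the oracle guarantee $u_k\le\alpha\ell_k$, this yields the clean bound $u_k\le\tfrac{\alpha b_k}{a_k+b_k}\,u_{k-1}$.

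The delicate step, which I expect to be the main obstacle, is converting this into a genuine slope-independent linear rate. One cannot simply bound $b_k/(a_k+b_k)\le\tfrac12$, since the steps $b_k=a_{k+1}$ need not be monotone (an explicit convex example shows the step sizes can grow). Instead I would apply $a_k+b_k\ge 2\sqrt{a_kb_k}$ to get $u_k\le\tfrac{\alpha}{2}\sqrt{b_k/a_k}\,u_{k-1}$ and track the potential $\Phi_k:=u_k/\sqrt{a_{k+1}}$, for which the square-root factors telescope into $\Phi_k\le\tfrac{\alpha}{2}\Phi_{k-1}$; unwinding gives $u_k\le(\alpha/2)^{k-1}u_1\sqrt{a_{k+1}/a_2}$. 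To control the residual geometric-mean factor I would use that the increasing iterates have limit at most $\tau_*$, hence the steps are summable with $\sum_{j\ge2}a_j\le\tau_*-\tau_1$; together with $u_1\le\alpha\ell_1=\alpha|s_1|a_2$ this gives $u_1\sqrt{a_{k+1}/a_2}=\alpha|s_1|\sqrt{a_2a_{k+1}}\le\alpha|s_1|\tfrac12(a_2+a_{k+1})\le|s_1|(\tau_*-\tau_1)$, which is exactly one of the two terms in $C$. Thus $u_k\le(\alpha/2)^{k-1}C$.

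Finally I would solve $u_k\le\epsilon$. The second branch $\ell_1$ of $C$ enters through the anchor $u_1\le\alpha\ell_1\le2C$; setting $(\alpha/2)^{k-1}\!\cdot 2C\le\epsilon$ and taking logarithms to base $2/\alpha>1$ produces $k\le 2+\log_{2/\alpha}(2C/\epsilon)$, with the $+2$ absorbing the offset between the index where the contraction begins and the termination test, and the outer $\max\{\cdot,3\}$ covering the degenerate regime where the logarithm is small or negative. I emphasize again that the recursion is exact and elementary; the real work is the geometric-decay argument, where the AM-GM/telescoping device together with summability of the steps both removes any dependence on the slope of $f$ at the root and pins down the precise constant $C$.
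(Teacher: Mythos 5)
Your proof is correct and is essentially the paper's argument in different packaging: your per-step inequality $u_k\le\frac{\alpha b_k}{a_k+b_k}u_{k-1}$ followed by AM-GM is exactly the paper's bound $\gamma_j\theta_j\le\gamma_j(\alpha-\gamma_j)\le\alpha^2/4$ on the product of the lower-bound ratio $\gamma_j=\ell_j/\ell_{j-1}$ and the slope ratio $\theta_j=s_j/s_{j-1}$, and your anchoring of the telescoped potential via $\ell_1=|s_1|a_2$ and $\sqrt{a_2a_{k+1}}\le\tfrac12(\tau_*-\tau_1)$ plays the role of the paper's two separately telescoped products anchored by $\ell_1$ and by $h_{k+1}\le\tau_*-\tau_1$. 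The only organizational difference is that the paper keeps the two products separate and combines them at the end via $\min\{A,B\}\le\sqrt{AB}$, whereas you fold them into the single potential $u_k/\sqrt{a_{k+1}}$; both routes yield the same $(\alpha/2)^{k-1}C$ decay and the stated iteration bound.
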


The iteration bound of the inexact secant method is indifferent to the
slope of the function $f$ at the minimal root $\tau_*$ because
termination depends on function values rather than proximity to
$\tau_*$.  The plots in Figure~\ref{fig:sec_degen} illustrate this
behavior: panel (a) shows the iterates for $f_1(\tau)=(\tau-1)^2-10$,
which has a nonzero slope at the minimal root
$\tau_*=1-\sqrt{10}\approx-2.2$ and so has a non-degenerate solution;
panel (c) shows the iterates for $f_2(\tau) = \tau^2$, which is
clearly degenerate at the solution. The algorithm behaves similarly on
both problems. When applied to the value function $v$ to find a root
of~\eqref{eqn:root}, the algorithm's indifference to degeneracy
translates to an insensitivity to the ``width''~\cite{ren_cond} of the
feasible region of \Psig\,---an unsurprising consequence of the fact
that the scheme maintains infeasible iterates for $\Psig$. Thus such
methods are well-suited for problems \Psig\ for which the Slater
constraint qualification is close to failing. On the other hand, for
non-degenerate problems, we can hope for superlinear convergence when
the function is evaluated with sufficient accuracy (see Theorem
\ref{Superlinear}).

Observe that the iteration bound in
Theorem~\ref{thm:lin_conv_inex_imp} is infinite for $\alpha\geq 2$.
Surprisingly, this is not an artifact of the proof. As illustrated by
Figure~\ref{fig:sec_degen}(b), the inexact secant method behaves
poorly for $\alpha$ close to $2$. Indeed, it can fail to converge linearly (or at all) to
the minimal root for any $\alpha \ge 2$, as the following example
shows. Consider the linear function $f(\tau)=-\tau$ with lower and
upper bounds $\ell_k:=-2\tau_k/(1+\alpha)$ and
$u_k:=-2\alpha \tau_k/(1+\alpha)$. A quick computation shows that the
quotients $q_k:=\tau_k/\tau_{k-1}$ of the iterates satisfy the
recurrence relation $q_{k+1}=(1-\alpha)/(q_k-\alpha)$. It is then
immediate that for all $\alpha \geq 2$, the quotients $q_k$ tend to
one, indicating that the method stalls.

\begin{figure}[t]
  \begin{tabular}{@{}ccc@{}}
   \includegraphics[width=.31\textwidth]{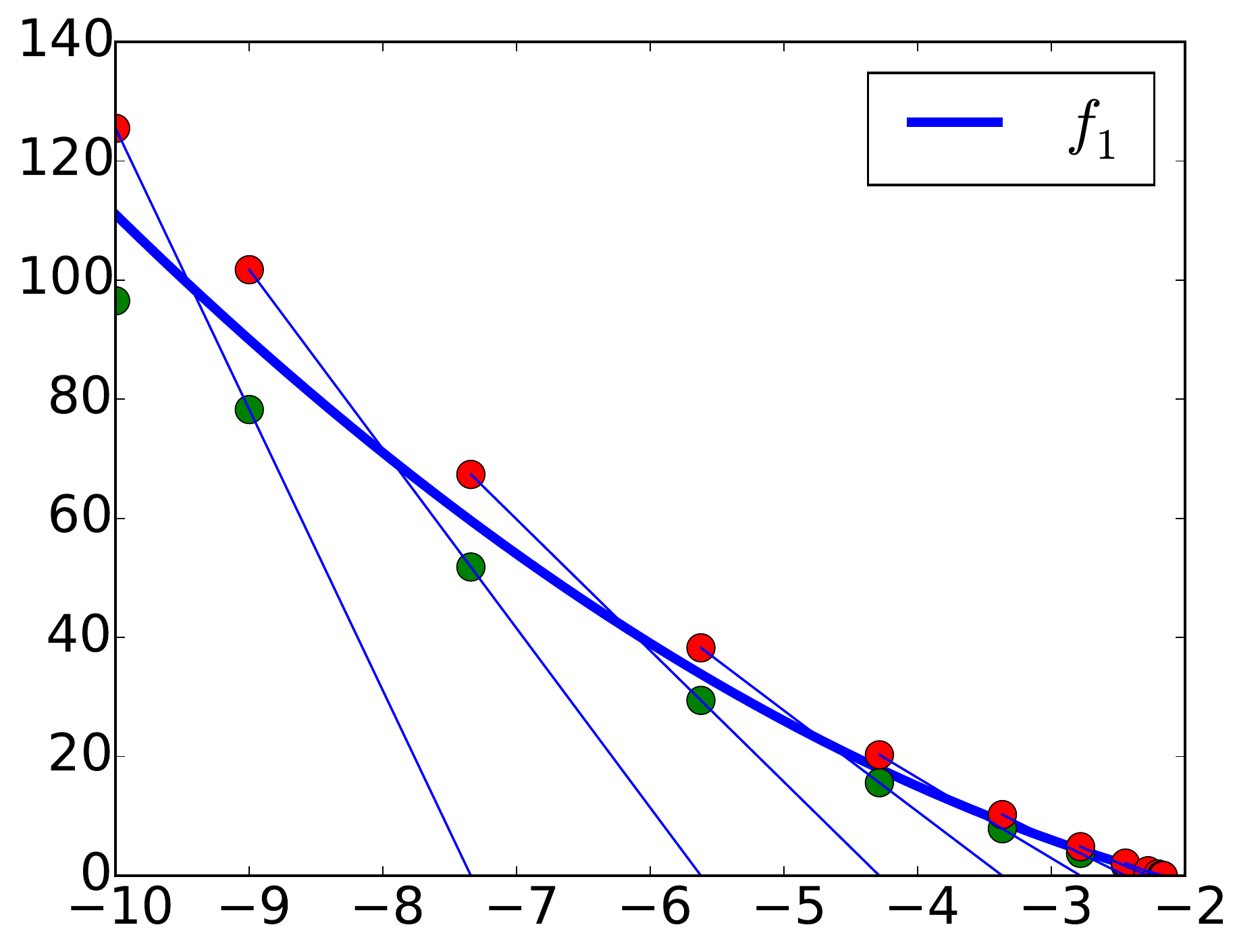}
  &\includegraphics[width=.31\textwidth]{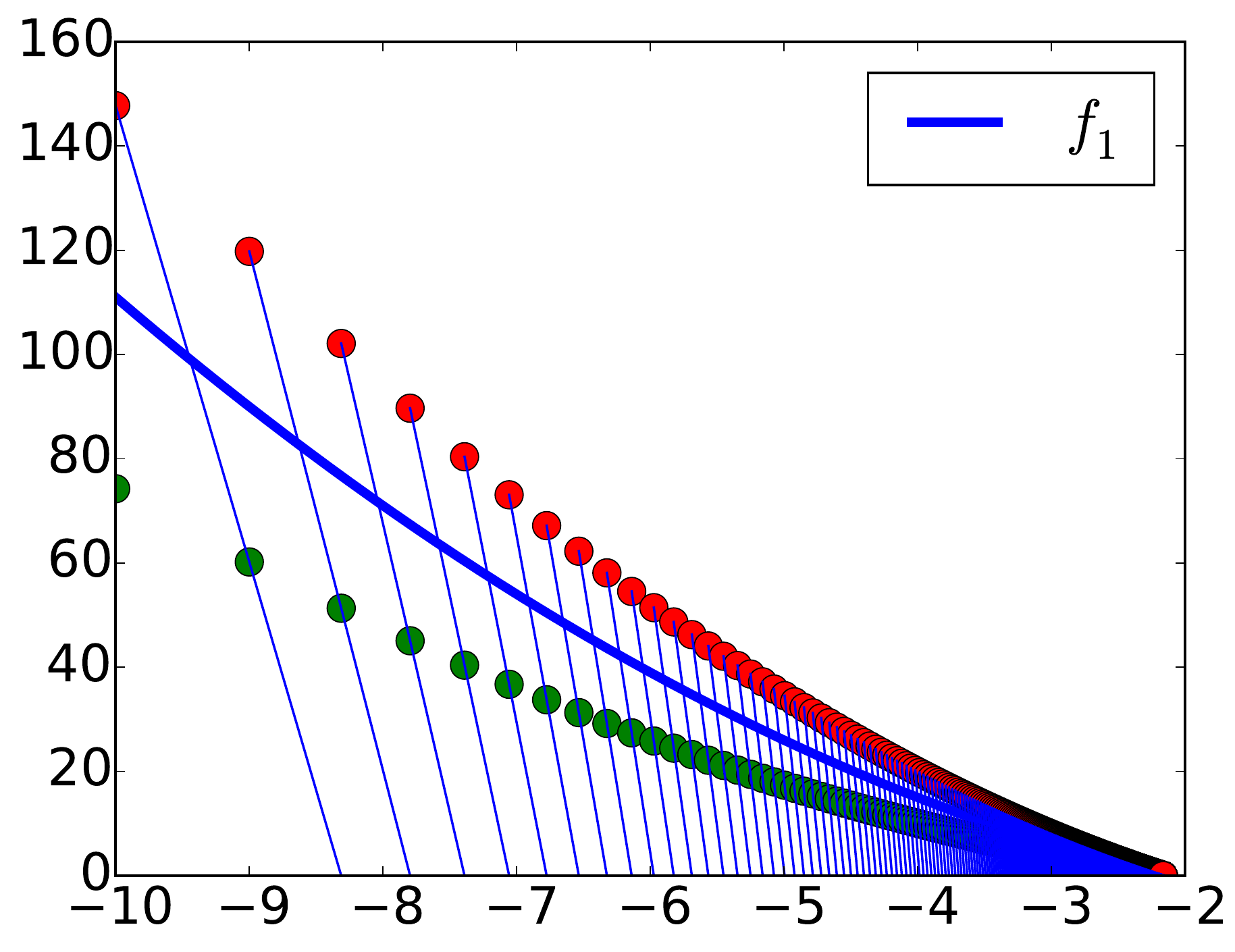}
  &\includegraphics[width=.31\textwidth]{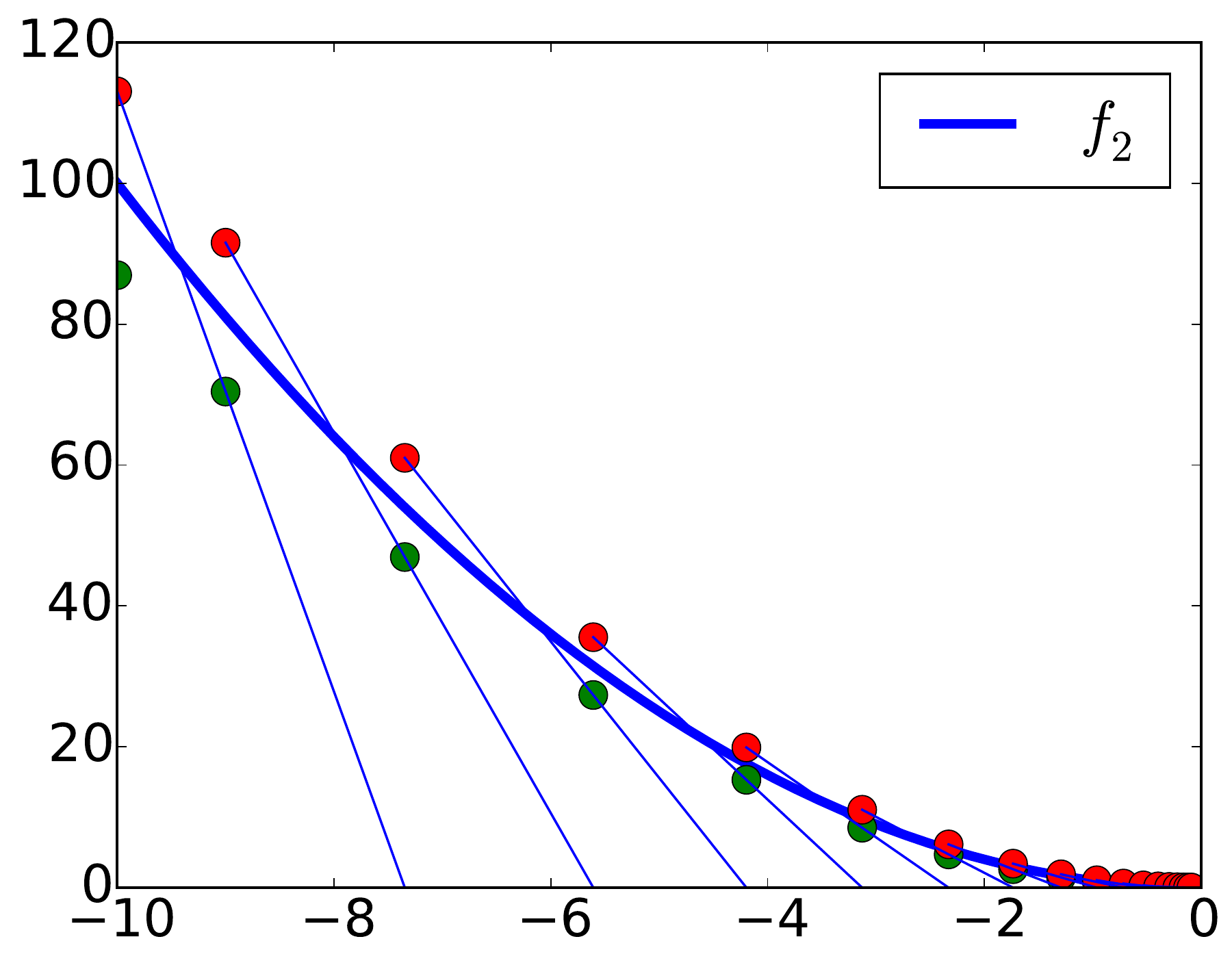}
 \\(a) $k=13,\ \alpha=1.3$ & (b) $k=770,\ \alpha=1.99$ & (c) $k=18,\ \alpha=1.3$
 \\\includegraphics[width=.31\textwidth]{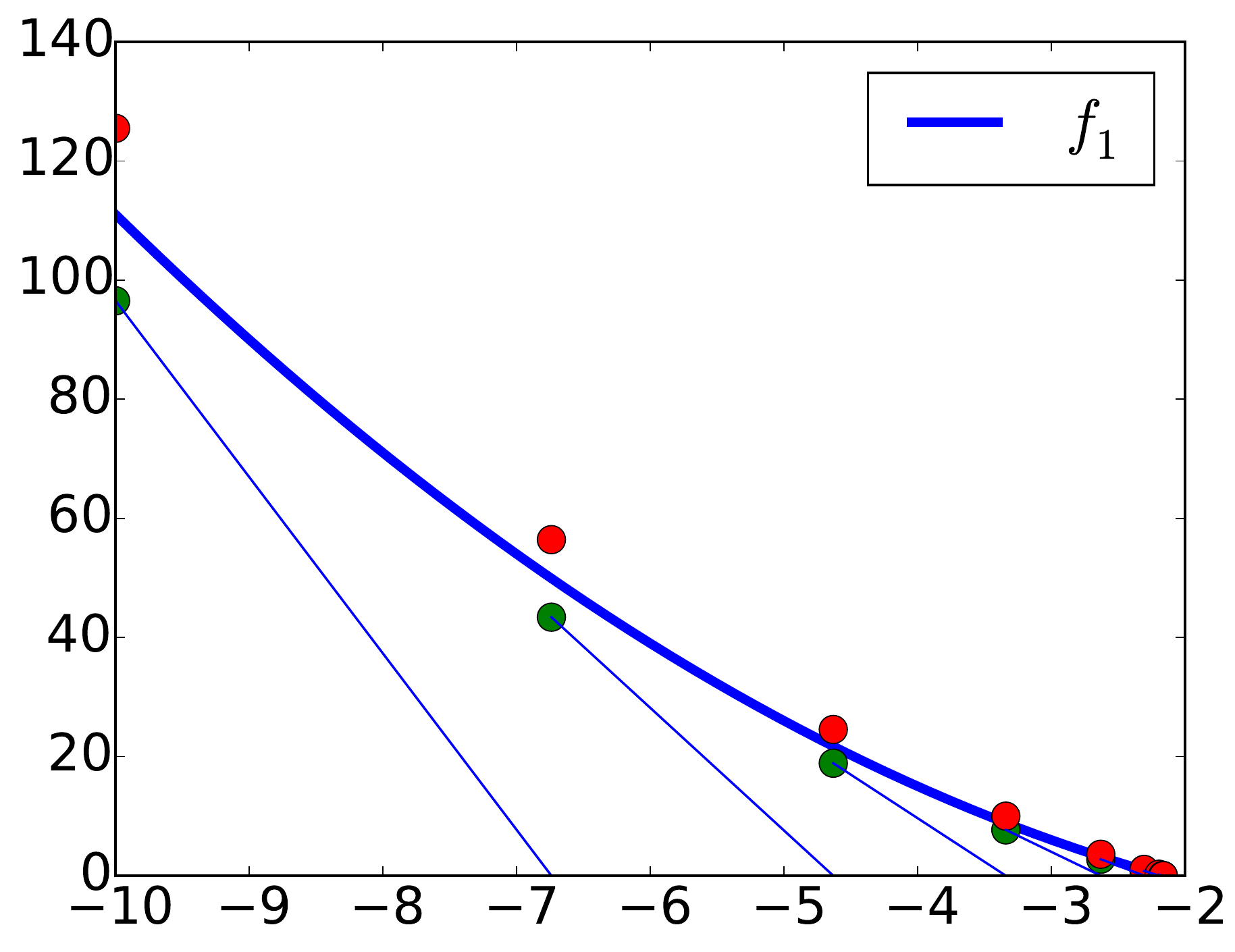}
  &\includegraphics[width=.31\textwidth]{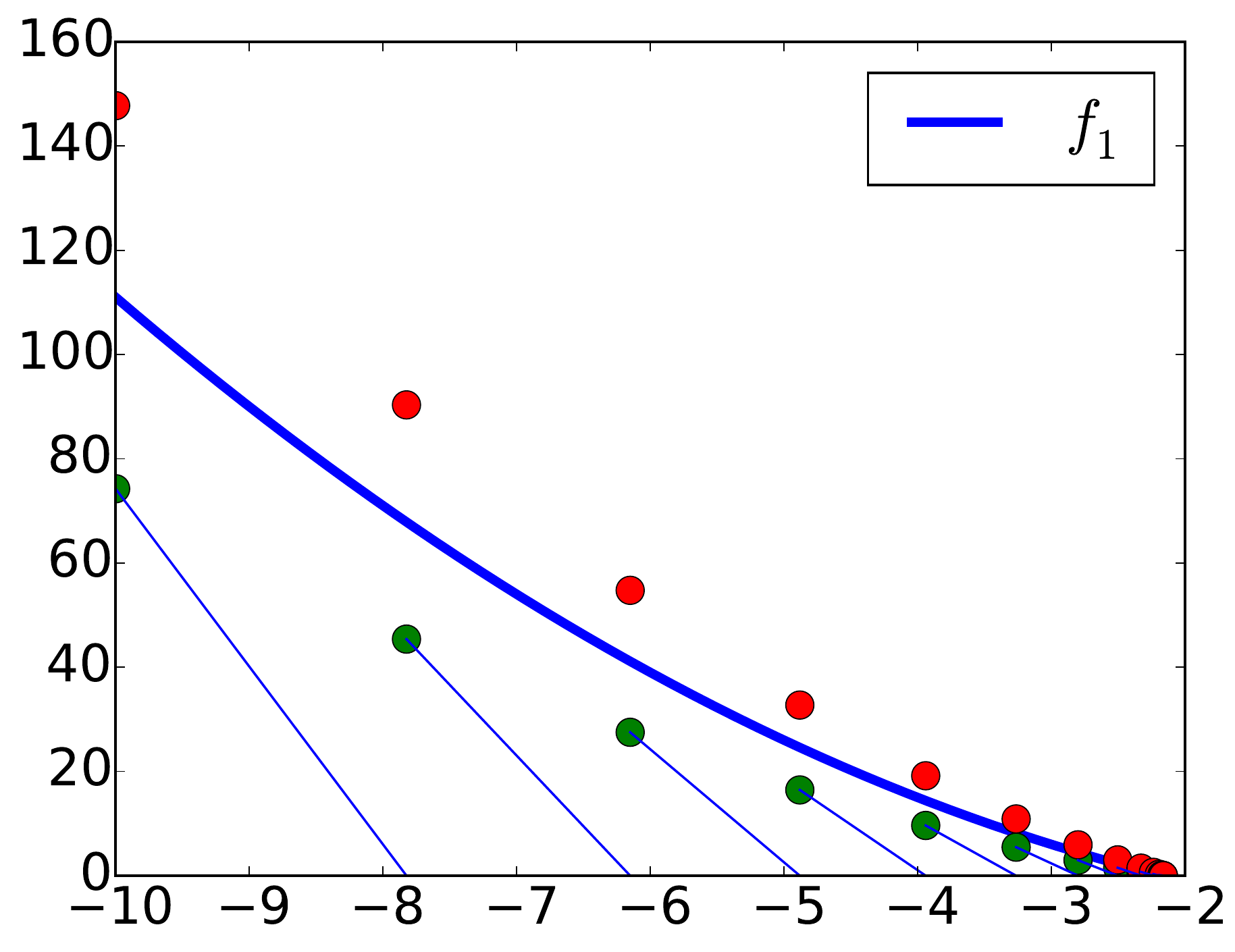}
  &\includegraphics[width=.31\textwidth]{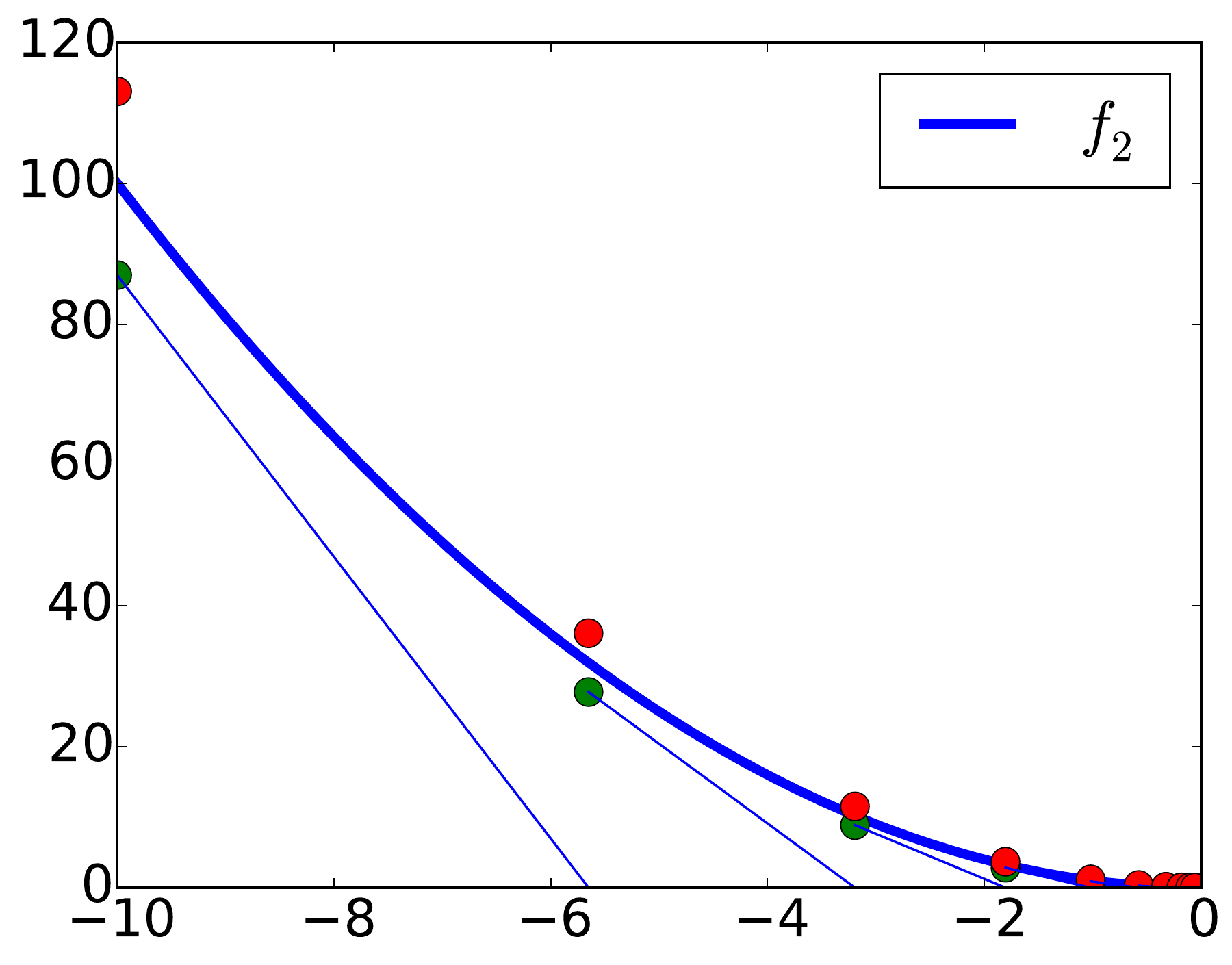}
 \\(d) $k=9,\ \alpha=1.3$ & (e) $k=15,\ \alpha=1.99$ & (f) $k=10,\ \alpha=1.3$
  \end{tabular}
  \caption{Inexact secant method (top row) and Newton method (bottom
    row) for root finding on the functions $f_1(\tau)=(\tau-1)^2-10$
    (first two columns) and $f_2(\tau)=\tau^2$ (last column).  Below
    each panel, $\alpha$ is the oracle accuracy, and $k$ is the number
    of iterations needed to converge, i.e., to reach
    $f_i(\tau_k)\le\epsilon$. For all problems,
    $\epsilon=10^{-2}$; the horizontal axis is $\tau$, and the vertical
    axis is $f_i(\tau)$.}
  \label{fig:sec_degen}
\end{figure}

\subsection{Inexact Newton}

The secant method can be improved by using approximate derivative
information (when available) to design a Newton-type method. We
design an inexact Newton method around an improved oracle that
provides global linear under-estimators of $f$. This approach has two main
advantages over the secant method. First, it is guaranteed to take
longer steps than the inexact secant method. Second, it locally
converges quadratically whenever $f$ is smooth, the values $f(\tau)$ 
are computed exactly, and the function has a nonzero (left) derivative at the
minimal root. To formalize these ideas, we use the following
strengthened version of an inexact evaluation oracle.

\begin{defn}[Affine minorant oracle]
  For a function $f:\bR_+ \to \bR$, an \emph{affine minorant oracle}
  is a mapping $\mathcal{O}_f$ that assigns to each pair
  $( t, \alpha ) \in [f > 0] \times [1,\infty)$ real numbers
  $(\ell,u,s)$ such that $0 < \ell \leq f(\tau)\leq u$ and
  $u/\ell \leq \alpha$, and the affine function
  $\tau' \mapsto \ell + s (\tau' - \tau)$ globally minorizes
  $f$.
\end{defn}

Algorithm~\ref{algo:newton} outlines a Newton method based on the
affine minorant oracle. The inexact Newton method enjoys global
convergence guarantees analogous to those of the inexact secant
method, as described by Theorem~\ref{thm:conv_inex_newt}; see Appendix~\ref{appendix:proofs} for the proof.

\begin{algorithm}[t]
  \DontPrintSemicolon 
  \KwData{Convex decreasing function
    $f\colon\bR_+\to\bR$ via an affine minorant oracle
    $\mathcal{O}_f$; target accuracy $\epsilon>0$; initial point
    $\tau_0$ with $f(\tau_0) > 0$; constant $\alpha\in (1,2)$.}
  $u_{-1}\gets +\infty$\;
  $k\gets0$\;
  \While{$u_k >\epsilon$}{
    $(\ell_k,u_k,s_k)\gets\cO_f(\tau_k,\alpha)$
      \Comment*{evaluate lower affine minorant oracle}
    $u_k\gets \min\{u_k,u_{k-1}\}$
      \Comment*{ensure upper bound decreases}
    $\tau_{k+1} \gets \tau_k - \ell_k/s_k$
      \Comment*{Newton iteration}
    $k\gets k+1$
  }
  \Return $\tau_{k}$\;
  \caption{Inexact Newton method}
  \label{algo:newton}
\end{algorithm}

\begin{theorem}[Linear convergence of the inexact Newton method]
  \label{thm:conv_inex_newt}
  The inexact Newton method (Algorithm ~\ref{algo:newton})
  terminates after at most
  \[ 
    k\leq \max \left\{ 1 + \log_{2/\alpha}\!
      \left( 2 C /\epsilon \right),\, 2 \right\}
  \]
  iterations, where $C := \max\{ |s_0|(\tau_*-\tau_0),\, \ell_0 \}$.
\end{theorem}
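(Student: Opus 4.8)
The plan is to carry out the standard monotone Newton-from-below analysis, adapted to the inexact affine-minorant oracle, and to reduce the iteration count to a geometric decay of the upper bounds $u_k$.

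First I would check that the scheme is well defined and monotone. Since $f$ is nonincreasing and possesses a root, any affine minorant has nonpositive slope, and a zero-slope minorant would contradict the existence of a root; hence each returned slope satisfies $s_k<0$, and the Newton step $\tau_{k+1}=\tau_k-\ell_k/s_k>\tau_k$ is a genuine rightward move. Because the affine function $\tau'\mapsto \ell_k+s_k(\tau'-\tau_k)$ minorizes $f$ and vanishes at $\tau_{k+1}$, we get $f(\tau_{k+1})\ge 0$, so $\tau_{k+1}\le\tau_*$; thus every iterate lies in $[f>0]$ (so each oracle call is legal) and $\tau_k$ increases monotonically with limit $\le\tau_*$, while the assignment $u_k\gets\min\{u_k,u_{k-1}\}$ makes $u_k$ nonincreasing. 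Along the way I would record the three one-step inequalities that drive everything: \emph{(i)} evaluating the minorant at $\tau_*$ gives $\ell_k\le |s_k|(\tau_*-\tau_k)$; \emph{(ii)} evaluating the minorant $g_k$ at the previous point $\tau_{k-1}$ and using $f(\tau_{k-1})\le u_{k-1}\le\alpha\ell_{k-1}$ together with $\tau_k-\tau_{k-1}=\ell_{k-1}/|s_{k-1}|$ yields the slope-growth bound $|s_k|\le\alpha|s_{k-1}|$; and \emph{(iii)} convexity of $f$ on $[\tau_k,\tau_*]$ with $f(\tau_*)=0$ gives the chord bound $f(\tau_{k+1})\le f(\tau_k)\,\tfrac{\tau_*-\tau_{k+1}}{\tau_*-\tau_k}$.

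The heart of the argument is a geometric decay of the upper bounds. I would introduce the potential $\Phi_k:=|s_k|(\tau_*-\tau_k)$, which by \emph{(i)} dominates $\ell_k$ and hence $u_k/\alpha$, and whose initial value is exactly $C=\max\{|s_0|(\tau_*-\tau_0),\ell_0\}$. Combining \emph{(i)} and \emph{(ii)} gives the clean recursion $\Phi_{k+1}\le \alpha(\Phi_k-\ell_k)$, while \emph{(iii)}, the relative-accuracy bound $u_k\le\alpha\ell_k$, and the identity $\tfrac{\tau_*-\tau_{k+1}}{\tau_*-\tau_k}=1-\ell_k/\Phi_k$ give $u_{k+1}\le \alpha\,u_k\,(1-\ell_k/\Phi_k)$. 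The goal is to combine these into
\[
u_k\le 2C\,(\alpha/2)^{\,k-1},
\]
so that termination ($u_k\le\epsilon$) is forced as soon as $2C(\alpha/2)^{k-1}\le\epsilon$, i.e. once $k\ge 1+\log_{2/\alpha}(2C/\epsilon)$; the hypothesis $\alpha<2$ is precisely what keeps the ratio $\alpha/2<1$, matching the blow-up of the bound at $\alpha=2$. The remaining steps are routine base-case bookkeeping for the first one or two iterations (using $u_{-1}=+\infty$ and $C=\Phi_0$), which produce the additive constant and the outer $\max\{\cdot,2\}$.

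The main obstacle is exactly this combination step. Neither $\Phi_k$ nor $u_k$ contracts by $\alpha/2$ at \emph{every} step: when the graph of $f$ is nearly linear across $[\tau_k,\tau_{k+1}]$ the slope ratio is close to $1$ (so $\Phi_k$ barely moves) but \emph{(iii)} forces $u_k$ to drop sharply, whereas when $f$ is sharply curved the reverse occurs ($u_k$ stalls while the slopes, and hence $\Phi_k$, collapse). The key structural fact to exploit is that these regimes alternate — a slow $\Phi$-step leaves the iterate in a steep portion of $f$ just before a bend, and the next Newton step necessarily leaps past the bend into a flatter region, which is fast — so one must set up an amortized accounting (tracking a combined quantity built from $\Phi_k$ and $u_k$, equivalently showing that at each step at least one of them contracts by $\alpha/2$) that survives the looseness of each individual inequality in its off-regime and still telescopes to the stated rate. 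Once this decay is established, converting it into the iteration count and checking the small-$k$ cases is straightforward.
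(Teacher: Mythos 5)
Your setup is sound --- the monotonicity and legality of the iterates, inequality \emph{(i)} $\ell_k\le|s_k|(\tau_*-\tau_k)$, and the recursion $\Phi_{k+1}\le\alpha(\Phi_k-\ell_k)$ are all correct, and the target $u_k\le 2C(\alpha/2)^{k}$ (up to index shifts) is indeed what one wants. But the crux of the proof is exactly the ``combination step'' that you leave open, and the ingredients you have assembled do not close it. The two recursions you propose to combine, $\Phi_{k+1}\le\alpha\Phi_k(1-r_k)$ and $u_{k+1}\le\alpha u_k(1-r_k)$ with $r_k=\ell_k/\Phi_k$, contract by the \emph{same} factor $\alpha(1-r_k)$, so when $r_k$ is small neither one contracts; the compensating fact $u_k\le\alpha r_k\Phi_k$ lets you show $u_{k+1}\le(\alpha^2/4)\Phi_k$ once, but iterating only yields $u_{k+m}\le(\alpha^2/4)\alpha^{m-1}\Phi_k$, which grows with $m$ for $\alpha>1$. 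Moreover your heuristic that the slow and fast regimes ``alternate'' is not true (one quantity can stall for many consecutive steps), so an argument built on alternation would fail.

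The missing idea is a \emph{coupled} one-step inequality, which your own derivation of \emph{(ii)} passes through and then discards: from $u_{k-1}\ge f(\tau_{k-1})\ge\ell_k+s_k(\tau_{k-1}-\tau_k)$ and $\tau_k-\tau_{k-1}=\ell_{k-1}/|s_{k-1}|$ you get not merely $|s_k|\le\alpha|s_{k-1}|$ but
\[
\frac{|s_k|}{|s_{k-1}|}+\frac{\ell_k}{\ell_{k-1}}\;\le\;\frac{u_{k-1}}{\ell_{k-1}}\;\le\;\alpha .
\]
Writing $\gamma_j=\ell_j/\ell_{j-1}=\alpha\beta_j$ and $\theta_j=|s_j|/|s_{j-1}|\le\alpha(1-\beta_j)$ with $\beta_j\in[0,1]$, the elementary bound $\beta_j(1-\beta_j)\le\tfrac14$ gives $\prod_j\gamma_j\cdot\prod_j\theta_j\le(\alpha^2/4)^k$, hence $\min\{\prod\gamma_j,\prod\theta_j\}\le(\alpha/2)^k$ --- this is the amortization, it holds unconditionally at every step, and no case analysis or alternation is needed. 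Feeding this into $\ell_k=\ell_0\prod\gamma_j$ on one hand and $\ell_k\le|s_k|(\tau_*-\tau_0)=|s_0|(\tau_*-\tau_0)\prod\theta_j$ on the other yields $\ell_k\le C(\alpha/2)^k$ and $u_k\le\alpha\ell_k$, from which the stated iteration count follows by the bookkeeping you describe. This is precisely the route taken in Appendix~\ref{appendix:proofs}: the proof of Theorem~\ref{thm:conv_inex_newt} reduces to that of Theorem~\ref{thm:lin_conv_inex_imp}, whose entire content is the two product bounds \eqref{eqn:esp_est1} and \eqref{eqn:eps_est2} and the estimate \eqref{eqn:product_est}.
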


When we compare the two algorithms, it is easy to see that the Newton
steps are never shorter than the secant steps. Indeed, let
$(\ell_{k-1},u_{k-1},s_{k-1})=\cO_f(\tau_{k-1},\alpha)$ and
$(\ell_{k},u_{k},s_{k})=\cO_f(\tau_{k},\alpha)$ be the triples
returned by an affine minorant oracle at $\tau_{k-1}$ and $\tau_{k}$,
respectively. Then
\[ u_{k-1}\geq f(\tau_{k-1})\geq \ell_k + s_k (\tau_{k-1} - \tau_k),\]
which implies
\[
  s^{\hbox{\scriptsize secant}}_k
  := (u_{k-1}-\ell_k)/(\tau_{k-1}-\tau_k)\leq s_k
  =: s^{\hbox{\scriptsize newton}}_k.
\]
Therefore, the Newton step length
$-\ell_k/s^{\hbox{\scriptsize newton}}_k$ is at least as large as
the secant step length $-\ell_k/s^{\hbox{\scriptsize secant}}_k$.

As might be expected, the Newton method often outperforms the secant method
in practice. The bottom row of panels in
Figure~\ref{fig:sec_degen} shows the progress of the Newton method on
the same degenerate and nondegenerate test problems discussed
earlier. Note in particular that the Newton method performs relatively
well even when $\alpha$ is near its upper limit of 2; compare panels
(b) and (e) in the figure. In this set of experiments, we chose an
oracle with the same quality lower and upper bounds as the experiments
with secant, but has the least favorable (i.e., steepest) slope that
still results in a global minorant.

\subsection{Lower minorants from duality}\label{sec:lower minorants via duality}

Under what circumstances are affine minorant oracles of the value
function $v$ readily available? Not surprisingly, duality delivers an
answer. Suppose we can express the value function in dual form
$$
  v(\tau)=\max_{y}\ \Phi(y,\tau),
$$
where $\Phi$ is concave in $y$ and convex in $\tau$.  For example,
appealing to Fenchel duality, we may write
\begin{equation*}
  \begin{aligned}
    v(\tau)
    &= \min_{x\in \cX}\ \set{\rho(Ax-b)}{\varphi(x)\leq \tau}
  \\&= \min_{x\in \bR^n}\ \rho(Ax-b)
       + \delta_{\mathcal{X}\cap\,[\varphi\leq \tau]}(x)
  \\&= \max_{y\in\bR^m} \ip{y}{b} - \rho^{\star}(-y)
       - \delta^{\star}_{\mathcal{X}\cap\,[\varphi\leq \tau]}(A\T y),
	\end{aligned}
\end{equation*}
where the last equality holds provided that either the primal or the
dual problem has a strictly feasible point \cite[Theorem
3.3.5]{cov_lift}.  Hence, the Fenchel dual objective
\begin{equation}\label{eqn:dual-rep}
\Phi(y,\tau):=\ip b y -\rho^{\star}(-y) -\delta^{\star}_{\mathcal{X}\cap
  [\varphi\leq \tau]}(A\T y)
\end{equation}
yields an explicit representation for $\Phi$.  Note that convexity of
$\Phi$ in $\tau$ is immediate; see Lemma~\ref{lem:conc_supp}.

Many standard first-order methods that might be used as an oracle for
evaluating $v(\bar\tau)-\sigma$, generate both a lower bound
$\bar\ell$ and a dual certificate $\bar{y}$ that satisfy the equation
$\bar\ell=\Phi(\bar{y},\bar{\tau})-\sigma$. Examples include
saddle-prox \cite{saddle_prox}, Frank-Wolfe~\cite{jaggi:2013,FW-alg}, some projected (sub)gradient methods
\cite{dual_frank}, and accelerated versions
\cite{tseng:2010,tseng_f_order,Nesterov2005c}.  Whenever such a dual
certificate $\bar{y}$ is available, we have
\begin{equation} \label{eqn:lowe-min}
  \begin{aligned}
    v(\tau)-\sigma\geq \Phi(\bar{y},\tau)-\sigma
    &= \big(\Phi(\bar{y},\bar{\tau})-\sigma\big)
       + \big(\Phi(\bar{y},\tau)-\Phi(\bar{y},\bar{\tau})\big)
  \\&\geq \bar{\ell}+\bar{s}(\tau-\bar{\tau}),
\end{aligned}
\end{equation}
where $\bar{s}$ is any subgradient of $\Phi$ at $(\bar{y},\bar{\tau})$
with respect to $\tau$. Hence, an inexact evaluation oracle that uses
dual certificates can always be upgraded to an affine minorant oracle
provided that an element of the subdifferential
$\partial_{\tau} \Phi(y,\tau)$ can be evaluated. In the context
of~\eqref{eqn:dual-rep}, this amounts to being able to compute 
an element of 
$\partial_{\tau}\delta^{\star}_{\mathcal{X}\cap\, [\varphi\leq \tau]}(A\T y)$. 
Reassuringly, such subdifferential formulas
are readily available for a huge class of contemporary
problems~\cite[Equations 4.1b, 6.5d,
6.20]{AravkinBurkeFriedlander:2013}, and, in particular, for all the
problems discussed in the rest of the paper.

In some instances, lower-bounds on the optimal value of $\Qtau$
provided by an algorithm are seemingly not related to a dual
solution. A notable example of such a scheme is the Frank-Wolfe
algorithm, which has recently received much attention.  Supposing that
the function $\rho$ is smooth, the Frank-Wolfe method applied to the
problem $\Qtau$ iterates the following two steps:
\begin{equation}\label{eqn:FW}
  \left\{
    \begin{aligned}
      z_{k}&\ =\ \argmin_{z\in \mathcal{X}\cap [\varphi\leq \tau]}~ \langle A^T\nabla \rho(Ax_k-b),z\rangle\\
      x_{k+1} &\ =\ x_k+t_k(z_k-x_k)
    \end{aligned}
  \right.
\end{equation}
for an appropriately chosen sequence of step-sizes
$t_k$ (e.g.,
$t_k=\frac{2}{k+2}$). As the method progresses, it generates the upper
bounds
\[
  u_k=\min_{i=1,\ldots,k} \rho(Ax_i-b)
\]
on the optimal value of
$\Qtau$.  Moreover, it is easy to deduce from convexity that the
following are valid lower bounds:
\[
  \ell_k=\max_{i=1,\ldots,k} \left\{\rho(Ax_i-b)+ \langle  A^T
    \nabla\rho(Ax_i-b) , z_i-x_i\rangle\right\}.
\]
Jaggi~\cite{jaggi:2013} provides an extensive discussion.  If the step
sizes $t_k$ are chosen appropriately, the gap satisfies
$u_k-\ell_k\leq \mathcal{O}(D^2L/k)$, where the diameter $D$ of the
feasible region and the Lipschitz constant $L$ of the gradient of the
objective function of \Qtau\ are measured in an arbitrary norm.
Harchaoui, Juditsky, and Nemirovski~\cite{cond_grad} observe how to
deduce from such lower bounds $\ell_k$ an affine minorant of the value
function $v$, leading to a level-set scheme based on Newton's method.

On the other hand, one can also show that the lower bounds $\ell_k$ are
indeed generated by an explicit candidate dual solution, and hence the
Frank-Wolfe algorithm (and its variants) fit perfectly in the above
framework based on dual certificates.  To see this, consider the
Fenchel dual
\[
  \maximize{y\in\bR^m}\quad \Phi(y,\tau)=\ip{y}{b} - \rho^{\star}(-y)
       - \delta^{\star}_{\mathcal{X}\cap
  [\varphi\leq \tau]}(A\T y)
\]
of \Qtau. Then for the candidate dual solutions
$y_i:=-\nabla \rho(Ax_i-b)$, we successively deduce
\begin{align*}
       \Phi(y_i,\tau)&=\langle y_i,b\rangle-\rho^{\star}(-y_i)-\langle A^Ty_i,z_i\rangle&\qquad \mbox{[definition of $z_i$]}\\
       &=\langle y_i,b\rangle+\Big(\rho(Ax_i-b)+\langle y_i, Ax_i-b\rangle\Big) -\langle A^Ty_i,z_i\rangle &\qquad\mbox{[Fenchel-Young inequality]}\\
       &=\rho(Ax_i-b)+\langle A^T\nabla \rho(Ax_i-b),z_i-x_i\rangle.
\end{align*}
Thus, the lower bounds $\ell_k$ are simply equal to
$\ell_k=\max_{i=1,\ldots,k} \Phi(y_i,\tau)$, and affine minorants on
the value function $v$ are readily computed from the dual iterates
$y_k$ and the derivatives
$\partial_{\tau}\delta^{\star}_{\mathcal{X}\cap\, [\varphi\leq
  \tau]}(A\T y_k)$.

\section{Refinements}\label{sec:refinement}
This section can be considered as an aside in our main
exposition. Here, we address two questions that arise in the
application of our root-finding approach: how best to apply the
algorithm to problems with linear least-squares constraints, and how
to recover a feasible point.

\subsection{Least-squares misfit and degeneracy}
\label{sec:ls-misfit}

Particularly important instances of problem \Psig\ arise when the
misfit between $Ax$ and $b$ is measured by the 2-norm, i.e.,
$\rho=\|\cdot\|_2$. In this case, the objective of the level-set
problem \Qtau\ is $\|Ax-b\|_2$, which is not differentiable whenever
$Ax=b$. Rather than applying a nonsmooth optimization scheme, an
apparently easy fix is to replace the constraint in \Psig\ with its
equivalent formulation $\tfrac12\norm{Ax-b}_2^2\le\tfrac12\sigma^2$,
 leading to the pair of problems
\begin{alignat}{4}
  &\minimize{x\in\cX} &\quad& \varphi(x)
                      &\quad& \st
                      &\quad& \tfrac12\|Ax-b\|_2^2\le\tfrac12\sigma^2,
    \tag{$\cP^2_\sigma$}
\\&\minimize{x\in\cX} && \tfrac12\|Ax-b\|^2
                      &&\st
                      && \varphi(x)\le\tau.
  \tag{$\cQ^2_\tau$}
\end{alignat}
Throughout this section, the problems \Psig\ and \Qtau\ continue to
define the original formulations without the squares.

This straightforward adaptation, however, presents some numerical
difficulties. Following the strategy outlined in the previous
sections, the root finding procedure for $\Psig^2$ would be
automatically applied to the function
\[
  f_2(\tau):=\half v^2(\tau)-\half\sigma^2,
\]
where $v$ is the value function corresponding to the original
(unsquared) level-set problem \Qtau. Clearly, the function $f_2$ is
degenerate at each of its roots. As a result, the secant and Newton
root-finding methods, respectively, would not converge locally
superlinearly or quadratically---even if the values $v(\tau)$ are
evaluated exactly. Moreover, we have observed empirically that this
issue can in some cases cause numerical schemes to stagnate.

A simple alternative avoids this pitfall: apply the root-finding
procedure to the function \[f_1(\tau) := v(\tau)-\sigma\]
corresponding to the value function of \Qtau, but solve $\Qtau^2$ to
approximately evaluate $f_2$ and consequently to approximately
evaluate $f_1$. The oracle definitions required for the secant
(Algorithm~\ref{algo:secant}) and Newton (Algorithm~\ref{algo:newton})
methods require suitable modification. For secant, the modifications
are straightforward, but for Newton, care is needed in order to obtain
the correct affine minorants of $f_1$ from those of $f_2$. The
required modifications are described in turn below.

\subsubsection*{Secant}

For the secant method applied to the function $f_1$, we derive an inexact
evaluation oracle from an inexact evaluation oracle for $f_2$ as
follows. Suppose that we have approximately solved $\Qtau^2$ by an inexact-evaluation oracle
\begin{equation}
  \label{eq:oracle-f2}
  \cO_{f_2}\big(\tau, \alpha^2\big)
  = \left(\half\ell^2-\half\sigma^2,\ \half u^2-\half\sigma^2\right),
\end{equation}
where we have specified the relative accuracy between the lower and
upper bounds to be $\alpha^2$. Assume, without loss of generality,
that $u,\ell\geq 0$. Then clearly $u$ and $\ell$ are upper and lower
bounds on $v(\tau)$, respectively.  It is now straightforward to
deduce 
\begin{equation} \label{eq:sec}
  0\leq \ell-\sigma\leq f_1(\tau) \leq u-\sigma
  \qquad \hbox{and}\qquad 
  \frac{ u-\sigma}{\ell-\sigma}
  \leq \sqrt{\frac{ u^2-\sigma^2}{ \ell^2-\sigma^2}}\leq \alpha.
\end{equation}
Hence an inexact function evaluation oracle for $f_2$ 
yields an inexact evaluation oracle for $f_1$.

\subsubsection*{Newton}

Newton's method in this setting is slightly more intricate: the nuance
is in obtaining a valid affine minorant of $f_1$.
We use the respective objectives of the dual problems corresponding to
$\Qtau$ and $\Qtau^2$, given by
\begin{align*}
  \Phi_1(y,\tau) &:=
  \ip b y
  - \delta^{\star}_{\cX\cap[\varphi\leq \tau]}(A^T y)
    - \delta_{\ball_2}(y),
\\
  \Phi_2(y,\tau) &:=
  \ip{b}{y}
  -\delta^{\star}_{\cX\cap[\varphi\leq \tau]}(A^T y)
   -\half\|y\|^2_2.
\end{align*}
As described by~\eqref{eq:oracle-f2}, an inexact solution of $\Qtau^2$
delivers values $\ell$ and $u$ that satisfy~\eqref{eq:sec}. Suppose
that the oracle additionally delivers a dual certificate $y$ that
satisfies $\Phi_2(y,\tau)=\half \ell^2$.  Let
$s\in \partial_{\tau} \Phi_2(y,\tau)$ be any subgradient. The
following result establishes that
\[
 (\hat\ell,\, u,\, s/\|y\|_2)
 \textt{with}
 \hat{\ell}:=\Phi_1\left(y/\|y\|_2,\,\tau\right),
\]
defines a valid affine minorant for $f_1$.

\begin{proposition}\label{cl:ineq_quad}
  The inequalities
  $$
  0\leq \hat\ell-\sigma\leq f_1(\tau) \leq u-\sigma
  \qquad\hbox{and}\qquad
  (u-\sigma)/(\hat\ell-\sigma) \leq \alpha
  $$ 
  hold, and the linear functional
  $\tau' \mapsto (\hat\ell-\sigma)-(s/\|y\|_2)(\tau'-\tau)$
  minorizes $f_1$.
\end{proposition}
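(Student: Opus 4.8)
The plan is to collapse the whole proposition onto one homogeneity identity relating the two dual objectives evaluated at the \emph{normalized} certificate. First I would note that $y\neq 0$: were $y=0$, then $\Phi_2(0,\tau)=0$, incompatible with the hypothesis $\Phi_2(y,\tau)=\tfrac12\ell^2$ since $\ell>0$. Consequently $y/\|y\|_2$ lies on the boundary of $\ball_2$, so $\delta_{\ball_2}(y/\|y\|_2)=0$, and positive homogeneity of the support function in its argument gives $\delta^{\star}_{\cX\cap[\varphi\leq\tau']}\!\big(A\T(y/\|y\|_2)\big)=\|y\|_2^{-1}\delta^{\star}_{\cX\cap[\varphi\leq\tau']}(A\T y)$. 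Comparing the definitions of $\Phi_1$ and $\Phi_2$ term by term then yields, for every $\tau'$,
\[
  \Phi_1\!\left(y/\|y\|_2,\ \tau'\right)=\frac{1}{\|y\|_2}\,\Phi_2(y,\tau')+\frac{\|y\|_2}{2}.
\]
Every assertion in the proposition is a corollary of this identity, weak duality $v(\tau')\geq\Phi_1(w,\tau')$ (valid for any $w$), and the bounds \eqref{eq:sec} already in hand.

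Next I would specialize the identity at $\tau'=\tau$ and substitute $\Phi_2(y,\tau)=\tfrac12\ell^2$ to obtain $\hat\ell=(\ell^2+\|y\|_2^2)/(2\|y\|_2)$. The single inequality carrying the argument is $\hat\ell\geq\ell$, which is just $(\ell-\|y\|_2)^2\geq 0$ after clearing the denominator. Granting this, the two-sided estimate is immediate: $\hat\ell-\sigma\geq 0$ follows from $\hat\ell\geq\ell\geq\sigma$; the bound $\hat\ell-\sigma\leq f_1(\tau)$ is weak duality $v(\tau)\geq\Phi_1(y/\|y\|_2,\tau)=\hat\ell$; and $f_1(\tau)\leq u-\sigma$ merely restates $v(\tau)\leq u$. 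For the relative-accuracy claim, $\hat\ell\geq\ell$ only enlarges the denominator, so $(u-\sigma)/(\hat\ell-\sigma)\leq(u-\sigma)/(\ell-\sigma)\leq\alpha$, the last step being exactly \eqref{eq:sec}; the ratio is well defined because $\hat\ell-\sigma\geq\ell-\sigma>0$ on the oracle's domain.

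It remains to produce the affine minorant, and here I would rerun the same chain at a free variable $\tau'$. Weak duality gives $f_1(\tau')=v(\tau')-\sigma\geq\Phi_1(y/\|y\|_2,\tau')-\sigma$; rewriting the right-hand side through the identity and then inserting the subgradient inequality $\Phi_2(y,\tau')\geq\Phi_2(y,\tau)+s(\tau'-\tau)$---legitimate because $\Phi_2$ is convex in $\tau$ (Lemma~\ref{lem:conc_supp}) and $s\in\partial_\tau\Phi_2(y,\tau)$---linearizes the bound. Folding the constant term back into $\hat\ell-\sigma$ leaves $f_1(\tau')\geq(\hat\ell-\sigma)+(s/\|y\|_2)(\tau'-\tau)$ for all $\tau'$, which is the asserted minorant carrying the slope $s/\|y\|_2$ recorded in the oracle triple $(\hat\ell,u,s/\|y\|_2)$. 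The hard part is really just spotting the identity: normalizing $y\mapsto y/\|y\|_2$ is precisely what restores dual feasibility for the \emph{unsquared} constraint while preserving an exact algebraic tie to $\Phi_2$, and the $(\ell-\|y\|_2)^2\geq 0$ trick converts the squared-misfit lower bound into an at-least-as-good bound on $v$. Beyond keeping that normalization and the $\tfrac12\ell^2$ convention straight, I anticipate no serious obstacle.
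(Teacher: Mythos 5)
Your proof is correct, and it follows the same basic strategy as the paper's --- normalize the dual certificate, compare $\Phi_1$ with $\Phi_2$, and extract the affine minorant from the duality computation \eqref{eqn:lowe-min} --- but you close the key inequality by a genuinely different and cleaner route. The paper verifies $(u-\sigma)/(\hat\ell-\sigma)\le\alpha$ head-on: it writes $\hat\ell-\sigma=(\ell^2+\|y\|_2^2-2\sigma\|y\|_2)/(2\|y\|_2)$ and completes a square in $u,\sigma,\alpha,\|y\|_2$ to show this dominates $\alpha^{-1}(u-\sigma)$. You instead isolate the homogeneity identity $\Phi_1(y/\|y\|_2,\tau')=\|y\|_2^{-1}\Phi_2(y,\tau')+\half\|y\|_2$, read off $\hat\ell=(\ell^2+\|y\|_2^2)/(2\|y\|_2)$, and reduce everything to the single AM--GM inequality $\hat\ell\ge\ell$, after which the ratio bound is inherited verbatim from \eqref{eq:sec}. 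This shortens the argument and makes the chain $0\le\hat\ell-\sigma\le f_1(\tau)$, as well as the slope relation $s/\|y\|_2\in\partial_\tau\Phi_1(y/\|y\|_2,\tau)$, transparent consequences of one identity plus weak duality; your explicit check that $y\ne0$ (needed to normalize) is a detail the paper leaves implicit and is worth keeping. One caveat: you obtain the minorant in the form $f_1(\tau')\ge(\hat\ell-\sigma)+(s/\|y\|_2)(\tau'-\tau)$, with a plus sign, whereas the statement (and the paper's own proof) writes a minus. Your sign is the one consistent with the subgradient inequality for the convex function $\tau\mapsto\Phi_2(y,\tau)$, with \eqref{eqn:lowe-min}, and with the affine-minorant oracle's convention $\tau'\mapsto\ell+s(\tau'-\tau)$; the minus in the proposition appears to be a sign slip rather than a gap in your argument.
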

The proof is given in Appendix~\ref{appendix:proofs}. In summary, if
we wish to obtain a super-optimal and $\epsilon$-feasible solution to
$\Psig$, in each iteration of the Newton method we must evaluate
$f_2(\tau)$ up to an absolute error of at most
$\frac{1}{2}(1-1/\alpha)^2\epsilon^2$. Indeed, suppose that in the
process of evaluation, the oracle $\cO_{f_2}\big(\tau, \alpha^2\big)$
achieves $u$ and $l$ satisfying
\[
  \half u^2-\half \ell^2\leq
  \half (1-1/\alpha)^2\epsilon^2.
\]
Then we obtain the inequality
\[
  u-\ell= \sqrt{(u-\ell)^2}
  \leq \sqrt{u^2-\ell^{2}} \leq (1-1/\alpha)\epsilon,
\]
Thus, by the discussion following Definition~\ref{defn:oracle}, either
the whole Newton scheme can now terminate with
$f_1(\tau)\leq \epsilon$ or we have achieved the relative accuracy
$(u-\sigma)/(\ell-\sigma)\leq \alpha$ for the oracle.

\subsection{Recovering feasibility}\label{sec:rec_feas}

A potential shortcoming of the level-set approach is that the computed
solutions are only $\epsilon$-feasible. Some applications may demand
feasible solutions. A straightforward remedy is to project the
computed $\epsilon$-feasible point onto the original constraint set
\( \set{x\in\cX}{\rho(Ax-b)\le\sigma}.  \)
However, this operation can be computationally impractical; for
example, access to the matrix $A$ is often only available through
matrix vector products. An alternative is provided by
Renegar~\cite{ren_conic}, who suggests an inexpensive
radial-projection scheme for conic optimization that generates a
feasible point while still preserving some notion of optimality. The
approach requires knowledge of a point $e$ strictly feasible for the
original problem, and obtains a feasible point $x$ whose optimality is
measured with respect to $e$, i.e.,
\[
  \frac{\varphi(x)-\OPT}{\varphi(e)-\OPT}\le\delta
\]
for some small positive parameter $\delta$.

To explain the approach, fix some target $\delta< 1$ and suppose that $e\in\cX$ is strictly feasible
for \Psig, i.e.,
\[
\rho(Ae-b)<\sigma.
\]
Suppose also that a point $z\in \mathcal{X}$ is super-optimal and $\epsilon$-feasible
for \Psig:
\[\varphi(z)\leq \textrm{OPT} \quad \textrm{ and }\quad
  \sigma < \rho(Az-b)\le\sigma+\epsilon,
  \textt{with}
  \epsilon := \delta\big[\sigma-\rho(Ae-b)\big].
\]
 These relationships imply the inequality
\[
  \alpha :=
  \frac{\rho(Az-b) - \sigma}{\rho(Az-b) - \rho(Ae-b)}
  \leq \delta.
\]
Set $x := z + \alpha ( e - z)$, which is the radial projection of $z$
towards the feasible point $e$.  It follows from convexity that
$\varphi(x) \leq (1-\alpha) \varphi(z) + \alpha \varphi(e)$. Subtract
$\OPT$ from both sides and rearrange terms to obtain
\[
 \frac{ \varphi(x) - \OPT }{ \varphi(e) - \OPT} \leq (1-\alpha) \frac{
  \varphi(z) - \OPT }{\varphi(e) - \OPT} + \alpha \le \alpha\le \delta.
\]
It only remains to show that the radial projection $x$ is
feasible. The inclusion $x\in \mathcal{X}$ follows from convexity of
$\cX$. Use the definition
of $\alpha$, together with the convexity of $\rho$, to obtain
\[
  \rho(Ax-b)\le\rho(Az-b)-\alpha[\rho(Az-b)-\rho(Ae-b)] = \sigma,
\]
which establishes feasibility of $x$.

\section{Some problem classes}
\label{sec:pr_class}

There is a surprising variety of useful problems that can be treated
by the root-finding approach. These include problems from sparse
optimization, with applications in compressed sensing and sparse
recovery, generalized linear models, which feature prominently in
statistical applications, and conic optimization, which includes
semidefinite programming. The following sections are in some sense a
``cookbook'' that describes how features of particular problems can be
combined to apply the root-finding approach. In some cases, such as
with conic optimization, we have the opportunity to derive unexpected
algorithms.

\subsection{Conic optimization}

The general conic problem (CP) has the form
\begin{equation}
  \label{eq:conic-problem}
    \minimize{x} \quad \ip c x \quad\st\quad \cA x=b,\ x\in\cK,
    \tag{CP}
\end{equation}
where $\cA: E_1 \to E_2$ is a linear map between Euclidean spaces, and
$\cK \subset E_1$ is a proper, closed, convex cone. The familiar forms
of this problem include linear programming (LP), second-order cone
programming (SOCP), and semidefinite programming (SDP). Ben-Tal and
Nemirovski~\cite{mode_lec} survey an enormous number of applications
and formulations captured by conic programming.

There are at least two possible approaches for applying the level-set framework.
The first exchanges the roles of the original objective $\ip c x$ with
the linear constraint $Ax=b$, and brings a least-squares term into the
objective; the second approach moves the cone constraint $x\in\cK$
into the objective via a kind of distance function. This yields two
distinct algorithms for the conic problem. The two approaches are
summarized in Table~\ref{tab:cp_flip}. Note that it is possible
to consider conic problems with the more general constraint
$\rho(Ax-b)\le\sigma$, but here we restrict our attention to the
simpler affine constraint, which conforms to the standard form of
conic optimization.

\begin{table}[t]
\centering\small
\begin{tabular}{p{.75in}lll}
  \toprule
  Problem
& \multicolumn{1}{c}{\Psig}
& \multicolumn{1}{c}{\Qtau}
& \multicolumn{1}{c}{Dual of \Qtau}
\\\midrule
 \vtop{CP\par least-squares\par level} &
    $\begin{array}[t]{cl}
       \displaystyle\min_{x} & \ip c x
       \\\hbox{s.t.} & \begin{aligned}[t]
           \cA x&=b \\ x&\in\cK
       \end{aligned}
     \end{array}$
    & $\begin{array}[t]{cl}
       \displaystyle\min_{x} & \|\cA x-b\|_{2}
         \\\hbox{s.t.} & \begin{aligned}[t]
            \ip c x&\le\tau\\x&\in \cK
            \end{aligned}
          \end{array}$
    & $\begin{array}[t]{cl}
         \displaystyle\max_{y,\ \mu\ge0} & \ip{b}{y} - \mu\tau
         \\\hbox{s.t.} & \begin{aligned}[t]
           \|y\|_{2}&\le1\\\mu c - \cA^{*}y&\in \mathcal{K}^*
         \end{aligned}
       \end{array}$
\\ \midrule
 \vtop{CP\par cone\par level} &
   $\begin{array}[t]{cl}
              \displaystyle\min_{x} & \ip c x
            \\\textrm{s.t.} & \begin{aligned}[t]\cA x&=b\\x&\in \mathcal{K}\end{aligned}
              \end{array}$
         & $\begin{array}[t]{cl}
              \displaystyle\min_{x} & -\lambda_{\min}(x)
            \\\textrm{s.t.} & \begin{aligned}[t]\cA x&=b\\\ip c  x&\le\tau\end{aligned}
              \end{array}$
         & $\begin{array}[t]{cl}
              \displaystyle\max_{y,\ \mu\ge0} & \ip{b}{y}-\mu\tau
            \\\textrm{s.t.} & \begin{aligned}[t]\ip {\mu c - \mathcal{A}^* y}{e} &=1\\ \mu c - \cA^{*}y&\in \mathcal{K}^*\end{aligned}
              \end{array}$
\\ \bottomrule
\end{tabular}
\caption{Least-squares and conic level-set problems for conic optimization. In these
  examples, we require $\cA x=b$.}
\label{tab:cp_flip}
\end{table}

\subsubsection{First approach: least-squares level set}
\label{sec:least-squares-level}

To get started with this approach, we make the blanket assumption that
we know a {\em strictly feasible} vector $\yhat$ for the dual
of~\eqref{eq:conic-problem}:
\begin{equation*}
\label{eq:conic-problem_dual}
\maximize{y} \quad \ip b y \quad\st\quad c-\mathcal{A}^*y\in\cK^*.
\end{equation*}
Thus $\yhat$ satisfies 
$\chat:=c - \cA^* \yhat \in \interior \mathcal{K}^*$. %
A simple calculation shows that minimizing
the new objective $\ip{\chat}{x}$ only
changes the objective of CP by a constant: for all $x$ feasible for
CP, we now have
\[
\ip{\chat}{x}=\ip{c}{x}-\ip{\cA x}{\yhat}=\ip{c}{x}-\ip{b}{\yhat}.
\]
In particular, we may assume $b \neq 0$, since otherwise, the
origin is the trivial solution for the shifted problem.  Note that in
the important case $c\in \interior \mathcal{K}$, we can simply set
$\yhat=0$, which yields the equality $c=\chat$.

We now illustrate the computational complexity of applying the
root-finding approach to solve (CP) using the level-set problem
\begin{equation}
  \label{eq:lp-qtau}
  \minimize{x} \quad \|\cA x - b\|_2
    \quad\st\quad \ip{\chat}{x} \le \tau,\ x\in\cK.
\end{equation}
Our aim is
then to find a root of \eqref{eqn:root}, where $v$ is the value
function of~\eqref{eq:lp-qtau}.  The top row of
Table~\ref{tab:cp_flip}, gives
the corresponding dual
\[
  \maximize{y,~\,\mu\geq 0} \quad \ip b y - \mu \tau
  \quad\st\quad \|y\|_2\le 1,\ \mu c - \cA^* y \in \mathcal{K}^*
\]
of the level-set problem.  We use $\tau_0=0$ as the initial
root-finding iterate. Because of the inclusion
$\chat\,\in\interior \mathcal{K}^*$, we deduce that $x=0$ is the only
feasible solution to \eqref{eq:lp-qtau}, which yields $v(0) = \|b\|_2$
and the exact lower bound $\ell_0=\|b\|_2$.  The corresponding dual
certificate is $(\bar{y},\bar{\mu}) = (b/\|b\|_2, \bar \mu)$, where
\begin{equation}\label{eqn:mu}
  \bar{\mu}:=\min_\mu\left\{\mu \chat-\frac{\mathcal{A}^*b}{\|b\|_2}\in \cK^*\right\}.
\end{equation}
Note the inequality $\bar{\mu}>0$, because otherwise we would deduce
$\mathcal{A}^*b\in -\mathcal{K}^*$, implying the inequality
$\|b\|^2_2=\langle b,\mathcal{A}x\rangle=\langle\mathcal{A}^*b,
x\rangle\leq 0$
for any feasible $x$. This contradicts our assumption that $b$ is
nonzero. In the case where $\cK$ is the nonnegative orthant and
$\chat=e$, the number $\bar \mu$ is simply the maximal coordinate of
$\mathcal{A}^*b/\|b\|_2$; if $\cK$ is the semidefinite cone and
$\chat=I$, the number $\bar \mu$ is the right-most eigenvalue of
$\mathcal{A}^*b/\|b\|_2$.
With these
values, Theorem~\ref{thm:conv_inex_newt} asserts that within
$\cO\big(\log_{2/\alpha} 2C/\epsilon\big)$ inexact Newton iterations,
where $\alpha$ is the accuracy of each subproblem solve and
\[
  C =
  \max
  \left\{
        \bar{\mu}\cdot\left(\text{OPT}- \langle b,\yhat\rangle\right)
        ,\|b\|_2
      \right\},
\]
the point $x\in \mathcal{K}$ that yields the final upper bound in
\eqref{eq:lp-qtau} is a super-optimal and $\epsilon$-feasible solution
of the shifted CP, i.e.,
$$\langle \chat,x\rangle\leq \OPT-\langle \yhat,b\rangle\qquad \textrm{ and } \qquad \|\mathcal{A}x-b\|_2\leq \epsilon.$$
To see how good the obtained point $x$ is for the original CP (without
the shift), note that
\[
  \langle \chat,x \rangle =\langle c,x\rangle-\langle
  \mathcal{A}^*\yhat,x\rangle=\langle c,x\rangle-\langle
  \yhat,\mathcal{A}x-b\rangle-\langle \yhat,b\rangle\geq \langle
  c,x\rangle-\langle \yhat,b\rangle-\epsilon\|\yhat\|_2,
\]
and hence $\langle c,x\rangle \leq \OPT+\epsilon \|\yhat\|_2$. In particular, in the important case where $c\in \interior \cK^*$, we deduce super-optimality $\langle c,x\rangle \leq \OPT$ for the target problem CP.

Each Newton root-finding iteration requires an approximate solution
of~\eqref{eq:lp-qtau}. As described in \S\ref{sec:ls-misfit}, we
obtain this approximation by instead solving its smooth formulation
with the squared objective $\half\|\cA x-b\|_2^2$.  Let $L:=\|\cA\|_2^2$ be the
Lipschitz constant for the gradient $\cA^T(\cA x-b)$, and let $D$ be the
diameter of the region $\{x\mid\ip \chat x = 1,\
x\in \cK\}$, which is finite by the inclusion $\chat\in \interior \cK^*$. %
Thus, in order to evaluate
$v$ to an accuracy $\epsilon$, we may apply an accelerated
projected-gradient method on the squared version of the problem to an
additive error of $\frac{1}{2}(1-1/\alpha)^2\epsilon^2$ (see end of \S\ref{sec:ls-misfit}), which
terminates in at most
\[
  \cO \left(
        \frac{\sqrt{L} \cdot\tau D}{\epsilon(1-1/\alpha)}
      \right)
 = \cO \left(
        \frac{\|A\|_2\cdot D\cdot \left(\hbox{OPT}-\langle b,\yhat \rangle\right)  }{\epsilon(1-1/\alpha)}
      \right)
\]
iterations~\cite[\S6.2]{bertsekas:2015}. Here, we have used the
monotonicity of the root finding scheme to conclude 
$\tau\leq \hbox{OPT}-\langle b,\yhat \rangle$. When $\mathcal{K}$ is
the non-negative orthant, each projection can be accomplished with
$\cO(n)$ floating point operations~\cite{BRUCKER1984163}, while for
the semidefinite cone each projection requires an eigenvalue
decomposition. More generally, such projections can be quickly found
as long as projections onto the cone $\mathcal{K}$ are available; see
Remark~\ref{rem:proj_base}. We note that an improved complexity bound
can be obtained for the oracles in the LP and SDP cases by replacing
the Euclidean projection step with a Bregman projection derived from
the entropy function; see e.g., Beck and
Teboulle~\cite{beck2003mirror} or Tseng~\cite[\S3.1]{tseng:2010}. We
leave the details to the reader.

\begin{figure}[t]
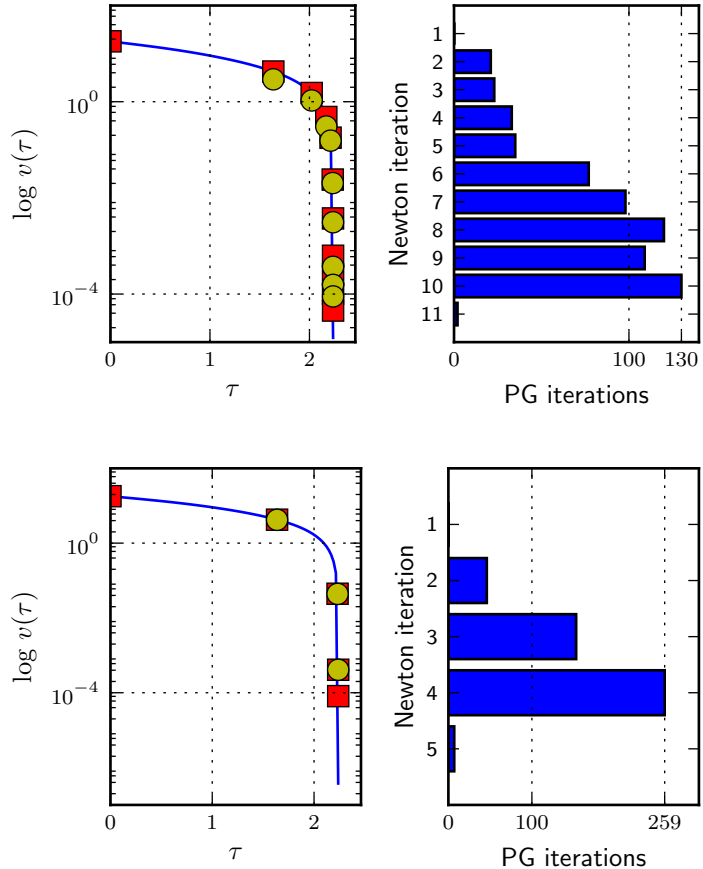

  \centering
  \input{figs/nnlp_1_8}
\\\input{figs/nnlp_1_01}
\caption{Progress of the root-finding method for a linear program. The
  panels on the left depict the graph of $v(\tau)$ (solid line),
  and the squares and circles, respectively, show the upper and
  lower bounds computed using an optimal projected-gradient
  method. The horizontal log scale results in a value function that
  appears nonconvex. The panels on the right show the number of
  projected-gradient iterations for each Newton step. Top panels:
  $\alpha=1.8$. Bottom panels: $\alpha=1.01$.}
\end{figure}

In summary, we can obtain a point $x\in \mathcal{K}$ that satisfies
$$\langle c,x\rangle\leq \OPT+\epsilon \|\yhat\|_2 \qquad \textrm{ and }\qquad \|\cA x-b\|_2\leq \epsilon $$
in at most
\[
  \cO \left(
  \frac{\|A\|_2\cdot D\cdot \left(\hbox{OPT}-\langle b,\yhat \rangle\right)  }{\epsilon(1-1/\alpha)}
  \right)
  \cdot
  \cO
  \left( \log_{2/\alpha} \frac{\max
  	\left\{
  	\bar{\mu}\cdot\left(\text{OPT}- \langle b,\yhat\rangle\right)
  	,\|b\|_2
  	\right\}}{\epsilon }
  \right)
\]
iterations of an accelerated projected-gradient method, where $\bar{\mu}$ is defined in \eqref{eqn:mu}.  Reassuringly,
the complexity bound depends on all the expected quantities.%

\subsubsection{Second approach: conic level set}

Renegar's recent work~\cite{ren_conic} on conic optimization inspires
a possible second level-set approach based on interchanging the roles
of the affine objective and the conic constraint
in~\eqref{eq:conic-problem}. A key step is to define a convex function
$\kappa$ that is nonnegative on the cone $\cK$, and positive
elsewhere, so that it acts as a surrogate for the conic constraint,
i.e.,
\begin{equation} \label{eq:8}
  \kappa(x)\le0 \textt{if and only if} x\in\cK.
\end{equation}
The conic optimization problem then can be expressed equivalently in
entirely functional form as
\begin{equation} \label{eq:2}
  \minimize{x} \quad\ip c x \quad\st\quad\cA x=b,\ \kappa(x)\le0,
\end{equation}
which allows us to define the level-set problem
\begin{equation} \label{eq:9}
  \minimize{x}\quad\kappa(x)\quad\st\quad\cA x=b,\ \ip c x \le\tau.
\end{equation}

Renegar gives a procedure for constructing a suitable surrogate
function $\kappa$ under the assumption that $\cK$ has a nonempty
interior: choose a point $e\in\interior\cK$ and define
$\kappa(x)=-\lambda\submin(x)$, where
\[
  \lambda\submin(x)
  := \inf\, \{ \lambda \mid x - \lambda e \not \in \cK \}.
\]
In the case of the PSD cone, we may take $e = I$, and then
$\lambda\submin$ yields the minimum eigenvalue function, which
explains the notation. As is shown in
\cite[Prop.~2.1]{ren_conic}, the function $\lambda_{\min}$ is
Lipschitz continuous (with modulus one) and concave, as would be
necessary to apply a subgradient method for minimizing $\kappa$. Renegar derives a novel algorithm along with complexity bounds for CP using the $\lambda_{\min}$ function. 
 A rigorous methodology for applying the level-set scheme, as described in the current paper, requires further research. It is an intriguing research agenda to unify Renegar's explicit complexity bounds with the proposed level-set approach.
We note in passing 
that the dual of the resulting level-set problem, needed to apply the lower
affine-minorant root-finding method, is shown in the second row of
Table~\ref{tab:cp_flip}, and can be derived using the conjugate of
$\lambda\submin$; see Lemma~\ref{rem:conjugate_lamdbdaMin}.

In principle, the main requirement of our level-set approach
is that the surrogate function that satisfies~\eqref{eq:8} yields the
equivalent formulation~\eqref{eq:2}.  Depending on the algorithms
available for solving the level-set problem~\eqref{eq:9}, it may be
convenient to define a function $\kappa$ with certain useful
properties. For example, we might choose to define the differentiable
surrogate function
\[
  \kappa = \half\di_\cK^2,
  \textt{where}
  \di_\cK(x):=\inf_{z\in\cK}\ \norm{x-z}
\]
measures the distance to the cone $\cK$.

Note the significant differences between the least-squares and conic
level-set problems~\eqref{eq:lp-qtau} and \eqref{eq:9}. For the sake
of discussion, suppose that $\cK$ is the positive semidefinite cone.
The least-squares level-set problem has a smooth objective whose
gradient can be easily computed by applying the operator $\cA$ and its
adjoint, but the constraint set still contains the explicit
cone. Projected-gradient methods, for example, require a full
eigenvalue decomposition of the steepest-descent step, while the
Frank-Wolfe method requires only a single rightmost eigenpair
computation.  The latter level-set problem, however, can require a
potentially more complex procedure to compute a gradient or
subgradient, but has an entirely linear constraint set. In this case,
projected (sub)gradient methods require a least-squares solve for the
projection step.

\subsection{Gauge optimization}
\label{sec:nonsmooth_reg}
In this section, we illustrate the general applicability of the
level-set approach to regularized data-fitting problems by restricting
the convex functions $\varphi$ and $\rho$ to be {\em gauges}---i.e.,
functions that are additionally nonnegative, positively homogeneous,
and vanish at the origin. Throughout, we assume that the side
constraint $x\in \mathcal{X}$ is absent from the formulation \Psig.  A
large class of problems of this type occurs in sparsity
optimization. Basis pursuit (and its ``denoising'' variant
BP$_\sigma$)~\cite{CheDonSau:99} was our very first example in
\S\ref{sec:intro}, and many related problems can be similarly
expressed.
The first two columns
of Table~\ref{tab:ns_reg} describe various formulations of current
interest, including basis pursuit denoising (BPDN), low-rank matrix
recovery~\cite{Fazel:2002,CandTao:2009}, a sharp version of the
elastic-net problem~\cite{EN_2005}, and gauge optimization \cite{FriedlanderMacedoPong:2014}
in its standard form. %
The third column shows the level-set problem
\ref{eqn:sp} needed to evaluate the value function $v(\tau)$, while the fourth column shows the slopes needed to implement the Newton scheme.

The dual representation~\eqref{eqn:dual-rep} can be specialized for
this family, and requires some basic facts regarding a gauge function
$f$ and its {\em polar}
\[
  f^\circ(y):=\inf\set{\mu>0}{\ip x y \le \mu f(x) \mbox{ for all } x}.
\]
When $f$ is a norm, the polar $f^\circ$ is simply the familiar dual
norm.  There is a close relationship between gauges, their polars, and
the support functions of their sublevel sets, as described by the
 identities~\cite[Prop.~2.1(iv)]{FriedlanderMacedoPong:2014}
\[
  f^{\circ} = \delta^{\star}_{[f\le1]}
  \quad\hbox{and}\quad
  f^{\star}=\delta_{[f^{\circ}\le1]}.
\]
We apply these
identities to the quantities involving $\rho$ and $\varphi$ in the
expression for the dual representation $\Phi$ in \eqref{eqn:dual-rep},
and deduce 
\[
  \delta^{\star}_{[\varphi\le\tau]}
  = \tau\delta^{\star}_{[\varphi\le1]}
  = \tau\varphi^\circ
  \quad\hbox{and}\quad
  \rho^{\star} = \delta_{[\rho^\circ\le1]}.
\]
Substitute these into $\Phi$ to obtain the equivalent
expression
\[
  \Phi(y,\tau) = \ip b y
  - \delta_{[\rho^\circ\le1]}(-y)
                 - \tau\varphi^\circ(A^T y).
\]
We can now write an explicit dual for the level-set problem~\ref{eqn:sp}:
\begin{equation}
  \label{eq:3}
  \maximize{y}\quad\ip b
  y-\tau\varphi^{\circ}(A^{T}y)
  \quad\st\quad
  \rho^{\circ}(-y)\le1.
\end{equation}
In the last three rows of the table, we set
$\rho=\norm{\cdot}_2$, which is self polar. For BPDN, we use the vector 1-norm
$\varphi=\norm{\cdot}_1$, whose polar is the dual norm
$\varphi^\circ=\norm{\cdot}_\infty$. For matrix completion, the function
$\varphi=\norm{\cdot}_*:=\sum_{i=1}^{\min\{m,n\}}\sigma_i(\cdot)$ is
the nuclear norm of a $n$-by-$m$ matrix, which is polar to the spectral
norm $\varphi^\circ= \sigma_{\submax}(\cdot)$. For the sharp elastic net,
we use Lemma~\ref{lem:sum-gauges} to deduce
\[
 (\alpha\norm{\cdot}_1 + \beta\norm{\cdot}_2)^\circ
 = (\gamma_{\frac1\alpha\bB_1} + \gamma_{\frac1\beta\bB_2})^\circ
 = \gamma_{\left(\frac1\alpha\bB_1\right)^\circ + \left(\frac1\beta\bB_2\right)^\circ}
 = \gamma_{\alpha\bB_\infty+\beta\bB_2}.
\]

A distinctive feature of all of the problems stated in
Table~\ref{tab:ns_reg} is the nondifferentiability of the objective
of~\ref{eqn:sp}. The choice seems especially peculiar when $\rho$ is
the 2-norm, since in that case, it is obvious that an
\emph{equivalent} smooth problem can be obtained by simply
squaring the objective \ref{eqn:sp} and the corresponding constraint
in the original problem \ref{eqn:ps}. Of course, we do not prescribe
the method for solving the level-set problem, and depending on the
application and solvers available, it may be more convenient or
efficient to solve a smooth variant of \ref{eqn:sp} in order to obtain
a solution of the nonsmooth version; cf. \S\ref{sec:ls-misfit}.

\begin{table}[t]
\centering\small
\begin{tabular}{l@{}l@{}l@{}l}
  \toprule
  Problem
& \multicolumn{1}{c}{\Psig}
& \multicolumn{1}{c}{\Qtau}
& \multicolumn{1}{c}{$\partial_{\tau} \Phi(y,\tau)$}
\\\midrule
 $\begin{array}[t]{@{}ll@{}}
    \rm gauge
  \\\rm optimization
  \end{array}$
  & $\begin{array}[t]{cl@{}}
      \displaystyle\min_{x} & \varphi(x)
    \\\textrm{s.t.} & \rho(Ax-b)\le\sigma
      \end{array}$
  & $\begin{array}[t]{cl@{}}
      \displaystyle\min_{x} & \rho(Ax-b)
    \\\textrm{s.t.} & \varphi(x)\le\tau
      \end{array}$
  & $-{\varphi^{\circ}(A^T y)}$
\\\midrule
     BPDN
    & $\begin{array}[t]{cl@{}}
         \displaystyle\min_{x} & \|x\|_{1}
       \\\textrm{s.t.} & \|Ax-b\|_{2}\le\sigma
         \end{array}$
    & $\begin{array}[t]{cl@{}}
         \displaystyle\min_{x} & \|Ax-b\|_{2}
       \\\textrm{s.t.} & \|x\|_{1}\le\tau
         \end{array}$
    & ${-\|{A^T y\|_\infty}}$
\\\midrule
$\begin{array}[t]{@{}ll@{}}
    \hbox{sharp}
  \\\hbox{elast-net}
  \end{array}$
  & $\begin{array}[t]{cl@{}}
       \displaystyle\min_{x} & \alpha\|x\|_{1}+\beta\|x\|_{2}
     \\\textrm{s.t.} & \|Ax-b\|_{2}\le\sigma
       \end{array}$
  & $\begin{array}[t]{cl@{}}
       \displaystyle\min_{x} & \|Ax-b\|_{2}
     \\\textrm{s.t.} & \alpha\|x\|_{1}+\beta\|x\|_{2}\le\tau
       \end{array}$
  & $-{\gam_{\alpha\bB_{\infty}+\beta\bB_{2}}(A^{T}\!y)}$
\\\midrule
 $\begin{array}[t]{@{}ll@{}}
    \rm matrix
  \\\rm completion
  \end{array}$
  & $\begin{array}[t]{cl@{}}
      \displaystyle\min_{X} & \|X\|_{*}
    \\\textrm{s.t.} & \|\cA X-b\|_{2}\le\sigma
      \end{array}$
  & $\begin{array}[t]{cl@{}}
      \displaystyle\min_{x} & \|\cA X-b\|_{2}
    \\\textrm{s.t.} & \|X\|_{*}\le\tau
      \end{array}$
  & $-{\sigma_1(\cA^{*}y)}$
\\\bottomrule
\end{tabular}
\caption{Nonsmooth regularized data-fitting. 
  }
\label{tab:ns_reg}
\end{table}

\subsection{Generalized linear models}
\label{subsec:GLM and robust regression}

In all the examples we have seen so far, we have encountered only two
types of misfit functions $\rho$, namely the squared 2-norm and the
various gauges listed in Table~\ref{tab:ns_reg}.
In this section, we broaden the scope by exploring several examples
arising from statistical modeling.  In particular, we consider the
broad class of {\it generalized linear models} (GLMs)~\cite{McNel},
which capture non-Gaussian data---including non-negative, count,
boolean and multinomial variables---and robust log-concave densities.

GLMs assume that the observed data is distributed according to a
member of the exponential family, and postulate a linear predictive
model for key parameters.  Suppose we are given data pairs
$\{(b_i,a_i)\}^n_{i=1}\subset \mathbb{R}\times\mathbb{R}^m$, where
$b_i$ is an observation associated with the covariate vector $a_i$ for
individual $i=1,\ldots,n$. GLMs assume that the postulated density for
each response $b_i$ is the function
\begin{equation}\label{glm with c}
  p(b_i;\, \theta_i) = C(b_i, \phi)\exp\left(\frac{b_i\theta_i - c(\theta_i)}{\phi}\right),
\end{equation}
where $\phi$ is the dispersion parameter, $\theta_i$ is the mean
parameter, $c(\cdot)$ is a function that specifies the distribution,
and $C(b_i,\phi)$ is a normalization constant that can depend on the data and
$\phi$.  To simplify the exposition, we focus only on the canonical
parameter $\theta_i$, and assume that the dispersion parameter $\phi$
is known and present in its simplest form; see McCullah and
Nelder~\cite{McNel} for more general cases.  Whenever the function $c$
is convex, it is clear that the resulting density is log-concave.  To
complete the GLM specification, one now assumes that $b_i$ is
distributed according to the GLM in~\eqref{glm with c} with
\[
\theta_i = a_i^Tx,
\]
where $x$ is an unknown vector that is uniform across the population
from which the data is selected. The task is to infer the vector $x$
from the given data.

A technical concept in GLM modeling is the {\it link function}---an
invertible function that maps likelihood parameters to the canonical
parameter $\theta_i$. For example, when working with count data, one
encounters the Poisson distribution, which is proportional to
$\exp(b_i\log \lambda_i - \lambda_i)$.  We identify~\eqref{glm with c}
with this distribution using the log link function, and set
$\theta_i = \log\lambda_i$. It necessarily follows that
$c(\theta_i) = \exp(\theta_i)$.

Assuming that the data is chosen independently from the population,
the negative log-likelihood function for this model is given by
\[
  L(b;Ax)=\sum^n_{i=1} -{\ln p(b_i; a_i^Tx)},
\]
where $a_i$ the $i$th row of the matrix $A$.  The
likelihood-constrained formulation \Psig\ for the regularized
GLM is thus given by the problem
\begin{equation}
  \label{eq:4}
  \minimize{x}\quad\varphi(x)\quad\text{subject to}\quad L(b;Ax)\le\sigma,
\end{equation}
where $\varphi$ is a given regularizer. For example, the 1-norm
regularizer may be used to induce sparsity in the parameter $x$.  A
reasonable choice for $\sigma$ is a proportion of the expectation:
\begin{equation}
\label{eq:genDist}
\sigma \propto \mathbb{E}\, L(b;\,Ax).
\end{equation}
When an estimate of the expectation is not available, $\sigma$ can be
selected by using an expected variance-reduction scheme, so that
$\sigma \propto L(b;0)$, where the proportionality constant is
chosen based on practitioner-prior experience.

\subsubsection*{Applying the level-set approach.}

We now describe the various ingredients needed to apply the level-set approach to
the GLM family.  For simplicity, we assume that $\varphi$ is a gauge,
which captures a broad range of regularizers
(cf.~\S\ref{sec:nonsmooth_reg}).  (Non-gauge regularizers are
considered in \S\ref{sec:mixed_norm}.) The corresponding level-set
problem \Qtau\ is given by
\begin{equation}
  \label{eq:5}
  \minimize{x}\quad L(b;Ax) \quad\st\quad \varphi(x)\le\tau.
\end{equation}
In order to derive global affine minorants, we require the
corresponding dual problem (cf.~\S\ref{sec:lower minorants via
  duality}). Set $L_b(\cdot):=L(b;\cdot)$, and apply Fenchel duality
to obtain
\begin{equation}
  \label{eq:6}
  \maximize{y}\quad - L_b^{\star}(-y) - \tau\varphi^{\circ}(A^{T}y).
\end{equation}
When $p$ is as given in~\eqref{glm with c}, we have
$L_b(z)=K+\sum_i\phi^{-1}(c(z_i)-b_iz_i)$, where
$K:=-\sum_i \ln C(b_i;\phi)$.  Hence, the dual problem takes the form
\begin{equation}
  \label{eq:7}
  \maximize{y}\quad K - \frac{1}{\phi} \sum_i c^{\star}(b_i-\phi  y_i)- \tau\varphi^{\circ}(A^{T}y).
\end{equation}
Table~\ref{table:glmBasic} lists common exponential distributions and
the link functions needed to represent them in the form of a
GLM~\eqref{glm with c}. The table also lists the resulting functions
$c$ and their conjugates needed for the dual.

\begin{table}[t]
\centering\small
\begin{tabular}{l@{\hskip 1cm}c@{\hskip 1cm}c@{\hskip 1cm}c}
  \toprule
   \multicolumn{1}{@{}c}{Distribution} 
 & $c(\theta)$ 
 & link function
 &  $c^{\star}(z)$
\\  \toprule
Gaussian
    & $\frac{1}{2}\theta^2$ 
    & $\rm{id}$
    &  $\frac{1}{2}z^2 $ 
\\
Huber~\cite{JMLR:aravkin13a}
    & $\rho_\kappa(\theta)$ 
    & $\rm{id}$
    & $\frac{1}{2}z^2 + \delta_{\kappa\bB_\infty}(z)$ 
\\
Poisson 
& $\exp(\theta)$
& $\log$
& $z\log z - z + \delta_{\bR_+}(z)$
\\
Bernoulli
& $\log(1+\exp(\theta))$ 
&$\rm{logit}$
& $z\log z + (1-z)\log(1-z) + \delta_{[0,1]}(z)$
\\
Gamma
& $-{\log}(-\theta)$ 
&$(\cdot)^{\ \mathclap{-1}}$
&$1-{\log}(-{z}) + \delta_{\bR_-}(z)$
\\\bottomrule
\end{tabular}
\caption{Parameters of the GLM family, including required conjugates
  for their dual representation. The interpretation of  coercive 
  PLQ penalties (such as the Huber) as kernels of statistical 
  distributions is developed in~\cite[Section 2]{JMLR:aravkin13a}.}\label{table:glmBasic}
\end{table}

\subsubsection{A fair comparison of regularizers}
\label{sec:learning}

Multiple experiments that involve different regularization functions
can be easily compared at the same admissible levels of misfit using the
formulation \Psig. This feature of \Psig\ is unique
among the alternative formulations.

As an example, consider classification using logistic regression
(corresponding to the Bernoulli distribution) with either 1- or 2-norm
regularization:
\begin{equation}
  \label{eq:lrForm}
  \minimize{x}\quad\|x\|_i \quad\st\quad L(b;Ax) \leq  \sigma,
\end{equation}
for $i = 1,2$. We set $\sigma:=L(b;0)/\eta$, where $\eta$ is a
specified proportionality constant. 
The likelihood of observing a Bernoulli random variable $b_i \in \{0, 1\}$ is 
given by 
\[
P(b_i) = \eta_i^{b_i} (1-\eta_i)^{1-b_i},
\]
where $\eta_i$ is the probability of observing $b_i = 1$. 
Rewriting to match~\eqref{glm with c}
gives 
\[
\begin{aligned}
P(b_i) & = \exp\left( b_i \log(\eta_i) + (1-b_i) \log(1-\eta_i)\right)\\
& = \exp\left( b_i \log\left(\frac{ \eta_i}{1-\eta_i}\right) +  \log(1-\eta_i)\right),
\end{aligned}
\]
which identifies the link function from Table~\ref{table:glmBasic}
with the canonical parameter
$\theta_i = \log\left(\frac{ \eta_i}{1-\eta_i}\right)$, and determines
$c(\theta_i) = \log(1+\exp(\theta_i))$. Composing with the linear
model $\theta_i =  a_i^T x$, we obtain the negative log
likelihood objective (ignoring the constant term)
\[
L(b; Ax) = \sum_{i=1}^n \log\left(1 + \exp( a_i^T x )\right) - b_i (a_i^T x).
\]

\begin{table}[tb]
  \centering
\begin{tabular}{lccccc}
\toprule
\multicolumn{1}{c}{$\eta$} & $1.1$ & $1.5$ & $1.9$ & $2.0$ & $2.1$
\\ \midrule
2-norm correct $+$ &0&0.07&0.46&0.52&0.57\\ 
1-norm correct $+$ &0&0.17&0.51&0.52&0.57\\ 
2-norm correct $-$ &.96&0.98&0.94&0.94&0.93\\ 
1-norm correct $-$ &.96&0.98&0.94&0.94&0.93\\ 
2-norm nonzero features &89&116&112&122&122\\ 
1-norm nonzero features &1&5&16&22&42\\ \bottomrule
\end{tabular}
\caption{Recovery results for likelihood-regularized \Psig\ logistic 
  regression formulations. Fractions of correctly identified cases and controls in {\it test} set are
  shown for classifiers corresponding to optimal 2-norm and 1-norm solutions of~\eqref{eq:lrForm}, fitting the data in terms of the proportionality constant 
  $\eta$.} \label{tab:lr} 
\end{table}
We run the approach on the Adult dataset~\cite{Lichman:2013}, which
aims to predict whether people make more than \$50K a year. The
challenge is that there are fewer positive than negative answers. The
full dataset has $m=122$ features and 48,844 individuals. We split this
group into $n=32562$ training and 16,282 test cases.  In the test set,
there are 3,846 individuals who make more than \$50K a year, and 12,436
who do not.
Table~\ref{tab:lr} shows that the 1-norm regularization 
has as good or better generalizability at all tested levels of $\eta$. 
The 1-norm does as well or better than 2-norm with the cases (people
earning more than \$50K), and gives a sparser model, while matching
identification of controls (people earning less than \$50K).

\subsubsection{Robust regression}

As another example, we consider log-concave robust penalties---an important subclass of GLMs.  We
illustrate the modeling possibilities of this subclass, using the
Huber penalty and its asymmetric extension, the quantile Huber (see
Figure~\ref{fig:hubs}).  The quantile Huber is parameterized by
$(\kappa, \tau)$, which control the transition between quadratic and
linear pieces, as well as the asymptotic slopes:
\begin{eqnarray}
\rho_{\kappa,\tau}(r) 
\label{quantileHuber}
&=& \begin{cases}
\tau |r| - \frac{\kappa \tau^2}{2} & \text{if} \text{ } r < -\tau\kappa,\\
\frac{1}{2\kappa}r^2 & \text{if} \text{ } r\in [-\kappa \tau, (1-\tau)\kappa],\\ 
(1-\tau) |r| - \frac{\kappa(1-\tau)^2}{2} & \text{if} \text{ } r > ~ (1-\tau)\kappa.
\end{cases}
\end{eqnarray} 
The quantile Huber generalizes both the quantile loss and the Huber
loss. We recover Huber when $\tau = 0.5$, and the quantile Huber
converges to the quantile loss (known as the \emph{check function}) as
$\kappa \rightarrow 0$. When $r$ is an $m$-vector instead of scalar,
we write $\rho_{\kappa,\tau}(r):=\sum_{j=1}^m\rho_{\kappa,\tau}(r_j)$,
and for simplicity we write $\rho_\kappa:=\kappa\rho_{2\kappa,0.5}$ to
denote the scaled Huber.

The Huber penalty figures prominently
 in high-dimensional regularized 
{\em robust} regression, as a %
measure of data misfit \cite{Hub,Mar,Bube2007,Dutt,Clark85,LiW98}.
High dimensional extensions (with sparse regularization) have been
studied by Sun and Zhang~\cite{sun2012scaled} with applications to
face recognition~\cite{yang2011robust} and signal
processing~\cite{kekatos2011sparse}. %
The quantile Huber, shown in Figure~\ref{fig:HuberQ}, was recently
introduced by Aravkin et al.~\cite{aravkin2014qh} as an alternative to
quantile regression---an asymmetric variant of the 1-norm used to
analyze heterogeneous datasets~\cite{KB78,Buchinsky:1994}, such as
those in computational biology~\cite{Zou08}, survival
analysis~\cite{KG01}, and
economics~\cite{KH01,Koenker:2005}. %

The methods of \S\ref{algo:newton} allow one to easily explore robust
regularization with the Huber penalty in the context of
sparsity. Specifically, consider the \BPsig\ problem, but with the
Huber penalty replacing the norm-squared error:
\begin{equation}\label{eq:Huber problem}
\minimize x \quad\norm{x}_1 \quad\st\quad \rho_{\kappa,\tau}(b-Ax) \leq \sigma.  
\end{equation}
It is well known that the Huber loss function is much less sensitive
(i.e., robust) to outliers in the data than the norm-squared. %
\begin{figure*}[t]
\centering
	\begin{subfigure}[t]{0.4\textwidth}
		\centering
		\begin{tikzpicture}
		\begin{axis}[
		thick,
		height=3cm,
		xmin=-2,xmax=2,ymin=0,ymax=1,
		no markers,
		samples=50,
		axis lines*=left, 
		axis lines*=middle, 
		scale only axis,
		xtick={-1,1},
		xticklabels={},
		ytick={0},
		] 
		\addplot[red,domain=-2:-1,densely dashed]{-x-.5};
		\addplot[blue, domain=-1:+1]{.5*x^2};
		\addplot[red,domain=+1:+2,densely dashed]{x-.5};
		\addplot[blue,mark=*,only marks] coordinates {(-1,.5) (1,.5)};
		\end{axis}
		\end{tikzpicture}
		\caption{\label{fig:Huber}Huber, $\kappa = 1$}
	\end{subfigure}    
	\begin{subfigure}[t]{0.4\textwidth}
		\centering
		\begin{tikzpicture}
		\begin{axis}[
		thick,
		height=3cm,
		xmin=-2,xmax=2,ymin=0,ymax=1,
		no markers,
		samples=100,
		axis lines*=left, 
		axis lines*=middle, 
		scale only axis,
		xtick={-.24,.56},
		xticklabels={},
		ytick={0},
		] 
		\addplot[red,domain=-2:-2*0.3*0.4,densely dashed]{0.3*abs(x) - 0.4*0.3^2};
		\addplot[blue,domain=-2*0.3*0.4:2*(1-0.3)*0.4]{0.25*x^2/0.4};
		\addplot[red,domain=2*(1-0.3)*0.4:2,densely dashed]{(1-0.3)*abs(x) - 0.4*(1-0.3)^2};
		\addplot[blue,mark=*,only marks] coordinates {(-.24,0.0550) (0.56,0.20)};
		\end{axis}
		\end{tikzpicture}
		\caption{\label{fig:HuberQ} quantile Huber}
	\end{subfigure}  
	\caption{Huber penalty and its asymmetric extension, quantile Huber. \label{fig:hubs}}
\end{figure*}
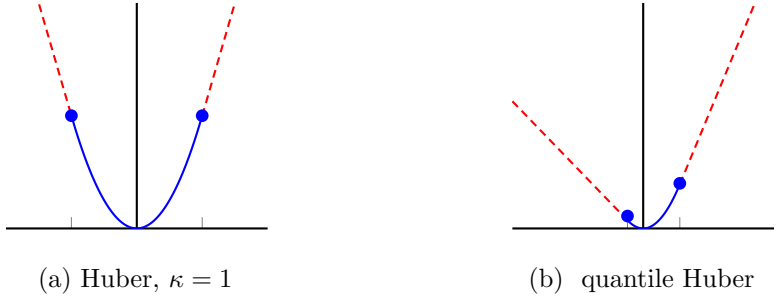

\begin{exa}[Robust sparse regression]\label{ex:rsr}
  As a proof of concept, we illustrate the level-set framework on the
  following example. We generate a $k$-sparse signal of dimension
  $n\gg k$, measure it with $m = 5k$ Gaussian random vectors, and
  contaminate the measurements with asymmetric outliers.  The results
  are shown in Figure~\ref{fig:HuberResults}. In the experiment,
  $n = 400$, $m = 100$, and $k=10$.  True measurements are obtained,
  and small Gaussian noise is added.  The measurements are then
  contaminated by six positive outliers generated by sampling
  uniformly from $[0, 0.5]$. The 2-norm, symmetric Huber, and quantile
  Huber are compared using our proposed level-set framework; all
  models are fit to a level $\sigma = 0.05\rho(b)$, where $b$ is the
  (contaminated) measurement vector. Both symmetric and quantile Huber
  show superior performance to the 2-norm. The advantage of the
  asymmetric Huber is fully evident in the residual plot. All the
  outliers in the example are positive, and using $\tau = 0.9$ for the
  quantile Huber, we identify all the outliers in the residual.

\begin{figure*}[h!]
   \begin{subfigure}[t]{0.47\textwidth}
     \includegraphics[width=\textwidth]{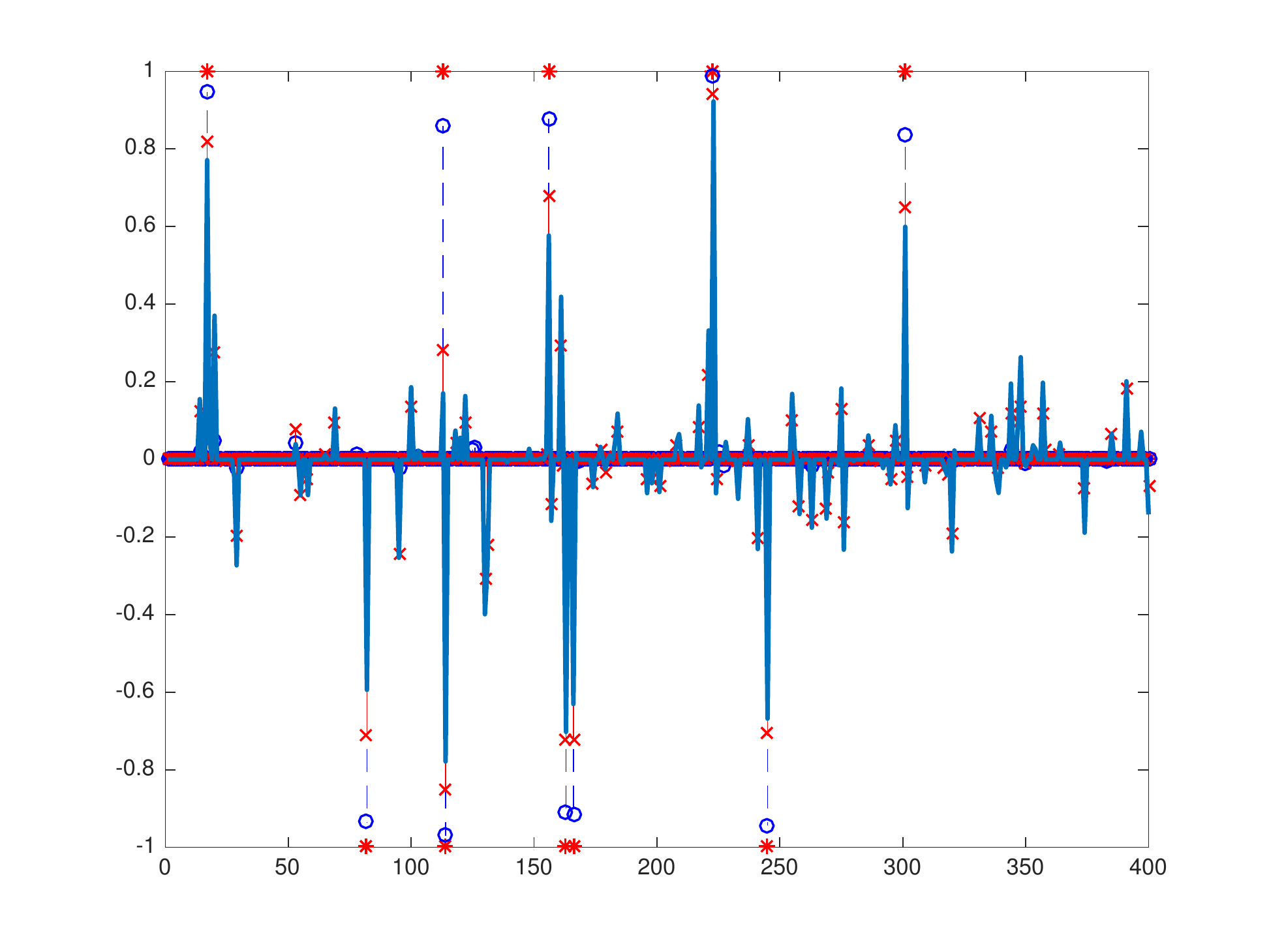}
 \caption{\label{fig:Signal} True and Fitted signals. Red asterisks show true sparse signal; blue solid line shows LS estimate; 
 solid dashed line ending in `x' shows Huber estimate; thin dashed line ending with `o' marker shows quantile Huber estimate. }
\end{subfigure}    
\hfill
   \begin{subfigure}[t]{0.47\textwidth}
     \includegraphics[width=\textwidth]{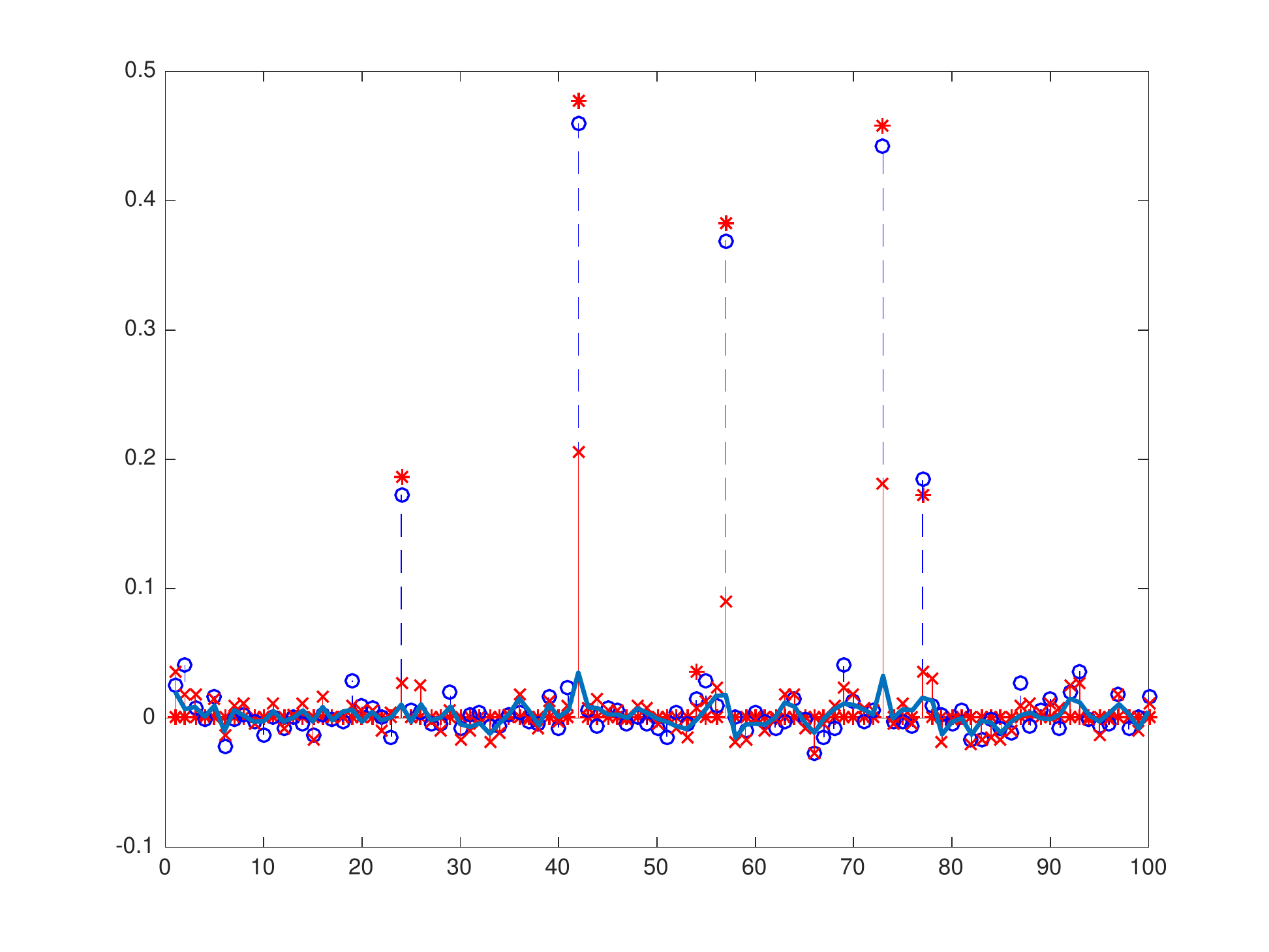}
\caption{\label{fig:Res} True and Fitted Residuals. Red asterisk shows true outliers; blue solid line shows LS residual; 
 solid dashed line ending in `x' shows Huber residual; thin dashed line ending with `o' marker shows quantile Huber residual. }
\end{subfigure}  
    \caption{\label{fig:HuberResults}Robust Asymmetric Recovery, comparing 2-norm, Huber, and quantile Huber penalties. 
    $\kappa=0.1$ for both Huber penalties; quantile Huber has $\tau = 0.9$ 
    to capture the fact that outliers are expected to be positive. }
\end{figure*}
    
    \end{exa}

\section{Case studies}
\label{case:stud}

\subsection{Low-rank matrix completion}
\label{subsec:low-rank-matrix}

A range of useful applications can be modeled as matrix completion
problems. Important examples include applications in recommender
systems and system identification (Recht, Fazel,
Parillo~\cite{RecFazPar:10}). The general principle extends to robust
principal component analysis (RPCA), where we decompose a signal into
low rank and sparse components, and its variants, including its stable
version, which allows for noisy measurements.  Applications include
alignment of occluded images~\citep{PenGanWri:12}, scene
triangulation~\citep{ZhaLiaGan:11}, model selection~\citep{CPW12},
face recognition, and document indexing~\citep{CanLiMa:11}.

These problems can be formulated generally as
\begin{equation}\label{eq:mat-comp-generic}
  \minimize{X\in\bR^{m\times n}}\quad\varphi(X)
  \quad\st\quad\rho(\cA X- b)\le\sigma,
\end{equation}
where $b$ is a vector of observations, the linear operator $\cA$
encodes information about the measurement process, and the objective
$\varphi$ encourages the required structure in the solution, e.g.,
low-rank. The function $\rho$ measures the misfit between the linear
model $\cA X$ and the observations $b$. If we wish to require $\cA
X=b$, we can simply set $\sigma=0$ and choose any nonnegative convex function $\rho$ with
$\rho^{-1}(0)=\{0\}$, e.g., $\rho=\twonorm{\cdot}$.
We categorize the problems of interest into two broad classes:
\emph{symmetric} and \emph{asymmetric} problems. For each case, we
outline how the level-set approach leads to implementable algorithms
with computational kernels that scale gracefully with problem size.

The first class of problems aims to recover a low-rank PSD matrix, and
in that case, the linear operator $\cA$ maps between the space of
symmetric $n\times n$ matrices and vectors, and we define the objective $\varphi$ by
\[
  \varphi_1(X) = \tr(X) + \delta_{\mathcal{S}^n_+}(X).
\]
 Problem~\eqref{eq:mat-comp-generic} then reduces to finding a
minimum-trace, PSD matrix that satisfies the measurements specified by
$\cA X=b$. There are analogs for optimization over complex Hermitian matrices; we focus on the real case only for simplicity. The formulation above captures, for example, the
\emph{PhaseLift} approach to the phase-retrieval problem, which aims
to recover phase information about a signal (e.g., an image) by using
only a series of magnitude
measurements~\cite{candes2012phaselift}. Important applications include
optical wavefront reconstruction for astrophysical imaging~\cite{BLL02}
and the 
imaging of the molecular structure of a crystal via X-ray
crystallography, which gives rise to such magnitude-only measurements;
see Waldspurger, d'Aspremont, and Mallat~\cite{waldspurger2015phase}
for a more complete description, including a number of other
applications.

The second class of matrix-recovery problems does not require
definiteness of $X$. In this case, the linear operator
$\cA$ on $\mathbb{R}^{m\times n}$ is not restricted to symmetric
matrices, and we define $\varphi$ as the nuclear norm:
\[
  \varphi_2(X) = \nucnorm{X}
  := \sum_{i=1}^{\mathclap{\min\{m,n\}}}\sigma_i(X),
\]
where $\sigma_i(X)$ is the $i$th singular value of $X$. This
formulation captures, for example, the bi-convex compressed sensing
problem~\cite{ling:2015}.

\begin{exa}[Robust PCA]\label{ex:RPCA}
  The second class captures a range of problems that are not
  immediately of the form \eqref{eq:mat-comp-generic}.  For example,
  the stable version of the RPCA problem~\cite{RPCA_algo_Wright} aims
  to decompose a matrix $Y$ as a sum of a low-rank matrix and a sparse
  matrix via the problem
\begin{equation} \label{eq:sum-SPCP-QP} 
   \minimize{L,S}\quad \lambda \|L\|_* + \kappa \|S\|_1
     + \half\|\mathcal{A}(L-Y)-S\|_F^2.
\end{equation}
Here the operator $\mathcal{A}$ is often a mask  for the known elements of $Y$.
The goal is to obtain a low-rank approximation to $Y$ where the deviation from the
known elements of $Y$ is as sparse as possible.
The parameters $\lambda$ and $\kappa$ are are chosen to balance
the rank of $L$ against the sparsity of the residual $S$ while
minimizing the least-squared misfit.  
This model can be given a statistical interpretation that fits nicely into the context
of robust regression as presented in Section \ref{subsec:GLM and robust regression}.

We proceed by eliminating $S$ in \eqref{eq:sum-SPCP-QP} by first minimizing
the objective over $S$ alone---an overlooked algorithmic technique for this problem. 
Observe that, as a function of $S$, the objective 
is the Moreau
envelope of the 1-norm evaluated at $\cA(L-Y)$, or, equivalently,
the Huber function $\rho_\kappa$ 
on $\bR^{m\times n}$~\eqref{quantileHuber}:
\[
\begin{aligned}
\inf_S \left\{\kappa \|S\|_1\! + \half\|\mathcal{A}(L-Y)\!-\!S\|_F^2\right\} 
=\! \rho_{\kappa} (\mathcal{A}(L\!-\!Y)).
\end{aligned}
\]
Problem~\eqref{eq:sum-SPCP-QP} can now be written in terms of $L$
alone:
\begin{equation*} \label{eq:sum-SPCP-QP-RED} 
   \minimize{L}\quad \lambda \|L\|_* + \rho_{\kappa }(\mathcal{A}(L-Y)).
\end{equation*}
This is the Lagrangian form of the robust estimation problem
\eqref{eq:Huber problem}. 
Arguably, we can now interpret the goal of this problem as one
of finding the lowest rank approximation to $Y$ over its known elements
subject to a bound on a robust measure of misfit. This yields the problem
\begin{equation} \label{eq:sum-SPCP-RED} 
   \minimize{L}\quad \nucnorm{L} \quad \st\quad \rho_{\kappa}(\mathcal{A}(L-Y)) \leq \sigma,
\end{equation}
for some choice  of parameter $\sigma\ge 0$. 
Various principled choices for
$\sigma$ are discussed in \S\ref{subsec:GLM and robust regression}.
\end{exa}

\subsubsection*{Level-set approach and the Frank-Wolfe oracle}

We apply the level-set approach, and exchange the roles of the
regularizing function $\varphi$ and the misfit $\rho(\cA X-b)$. Note
that the objective function $\varphi_1$ for the symmetric case
vanishes at the origin, and is convex and positively homogeneous It is
thus a gauge.  The second objective function $\varphi_2$ is simply a
norm. Therefore, for both cases, we may use the first row of
Table~\ref{tab:ns_reg} to determine the corresponding level-set
subproblem and affine minorants based on dual certificates. In particular, the
corresponding level-set subproblem \ref{eqn:sp}, which defines the
value function, is
\[
v(\tau) :=
     \min_{X}\set{ \rho(\cA X-b)}{\varphi(X) \le \tau}.
\]
We use the polar calculus described by Friedlander et
al. \cite[\S7.2.1]{FriedlanderMacedoPong:2014} and the definition of
the dual norm to obtain the required polar functions
\[
\varphi_1^\circ(Y) = \max\{0,\ \lambda_1(Y) \}
\textt{and}
\varphi_2^\circ(Y) = \sigma_1(Y)
\]
for the symmetric and asymmetric cases, respectively.

The evaluation of the affine minorant oracle requires an approximate
solution of the optimization problem that defines the value function
$v$, and computation of either an extreme eigenvalue or singular value
to determine an affine minorant. As numerous authors have observed, the Frank-Wolfe
algorithm \cite{jaggi:2013,FW-alg} is therefore especially well suited for 
evaluating the required quantities, and here we describe how to apply
the algorithm to this setting.

The Frank-Wolfe subproblem \eqref{eqn:FW}, used to generate search
directions at each iteration, takes the form
\begin{equation}\label{eq:fw-subprob}
  \maximize{S}\ \ip{G}{S}
  \quad\st\quad \varphi(S)\le\tau.
\end{equation}
where $G:=\cA^*\nabla\rho(\cA X-b)$ is the gradient of $\rho(\cA X-b)$
evaluated at the current primal iterate $X$. Note that the steplength
in this case is easily obtained as the minimizer of the quadratic
objective along the intersection of $[\varphi\leq \tau]$ and the ray
$X+\mathbb{R}_+(S-X)$.

Solutions for the linearized subproblems can be obtained by computing
extreme eigenvalues or singular values of
$G$~\cite[\S4.2]{jaggi:2013}. For the symmetric case, the constraint
\[
\varphi_1(S)\le\tau
\textt{is equivalent to}
\tr(S)\le\tau,\ S\succeq0.
\]
The linearized subproblem~\eqref{eq:fw-subprob} is then solved by any
matrix of the form
\[
  S = U \Diag(\xi_i) U^T
  \quad\hbox{with}\quad
  \sum_{i=1}^{k}\xi_i=\tau,\ \xi_i\ge0,
\]
where $U\in\mathbb{R}^{n\times k}$ is the matrix that collects the $k$
eigenvectors of $G$ corresponding to
$\lambda_1(G)$.  For the non-symmetric case, the constraint
$\varphi_2(S)\le\tau$ is simply $\nucnorm{S}\le\tau$, and the
linearized subproblem is solved by any matrix of the form
\[
  S = U\Diag(\xi_i)V^T
  \textt{with}
  \sum_{i=1}^{k}\xi_i=\tau, \ \xi_i\ge0,
\]
where $U\in\mathbb{R}^{m\times k}$ and $V\in\mathbb{R}^{n\times k}$
are the matrices that collect the $k$ singular vectors of $G$
corresponding to the leading singular value $\sigma_1(G)$.  In both
cases, Krylov-based eigensolvers, such as
ARPACK~\cite{lehoucq1998arpack} can be used for the required
eigenvalue and singular-value computation. If matrix-vector products
with the matrix $\mathcal{A}^*y$ and its adjoint are computationally
inexpensive, the computation of a few rightmost eigenvalue/eigenvector
pairs (resp., maximum singular value/vector pairs) is much cheaper
than the computation of the entire spectrum, as required by a method
based on projections onto the feasible region. Such circumstances are
common, for example when the operator $\mathcal{A}$ is sparse or it is
accessible through a Fast Fourier Transform (FFT). The following
example illustrates exactly this scenario.

\begin{exa}[Euclidean distance completion]
\label{ex:edcp}

A common problem in distance geometry is the inverse problem: given
only local pairwise Euclidean distance measurements  among a set of
points, recover their location in space.
Formally, given a weighted
undirected graph $G=(V,E,\omega)$ with a vertex set
$V=\{1,\ldots,n\}$, and a target dimension $r$, the {\em Euclidean
  distance completion problem} asks to determine a collection of
points $p_1,\ldots, p_n$ in $\bR^r$ approximately satisfying
$$
 \|p_i-p_j\|^2=\omega_{ij} \textt{for all edges} ij\in E.
$$
In literature, this problem is also often called $\ell_2$ graph
embedding and appears in wireless networks, statics, robotics, protein
reconstruction, and manifold learning; see the recent survey
\cite{surv}.

A popular convex relaxation for this problem was introduced by
Weinberger et al.~\cite{Weinberger}, and extensively studied by a
number of authors \cite{Bsen,bis_ye_toh_wang,noisy}:
\begin{equation}\label{eqn:maxtrace}
  \begin{array}{ll}
    \maxim & \tr X
 \\ \st    & \begin{aligned}[t]
                &\|\mathcal{P}_E\circ\mathcal{K}(X)-\omega\| \leq \sigma,
              \\&Xe=0,\ X\succeq 0,
             \end{aligned}
  \end{array}
\end{equation}
where $\mathcal{K}\colon \mathcal{S}^n\to\mathcal{S}^n$ is the mapping  $[\mathcal{K}(X)]_{ij}=X_{ii}+X_{jj}-2X_{ij}$
and $\mathcal{P}_E(D)$ is the canonical projection of a matrix $D$ onto entries indexed by the edge set $E$.
Indeed, if $X$ is a rank $r$ feasible matrix, we may factor it into $X=PP^T$, where $P$ is an $n\times r$ matrix. It is then easy to see that the rows of $P$ are the points $p_1,\ldots,p_n\in \mathbb{R}^r$ we seek. The constraint 
$Xe=0$ simply ensures that the points $p_i$ are centered around the origin.
Notice, that this formulation directly contrasts the usual min-trace regularizer in compressed sensing; nonetheless, it is very natural. An easy computation shows that in terms of any factorization $X=PP^T$, the equality $\tr(X)= \frac{1}{2n}\sum^n_{i,j=1} \|p_i-p_j\|^2$ holds. Thus trace maximization serves to ``flatten'' the realization of the graph. 

It is known that for $\sigma=0$, the problem formulation
\eqref{eqn:maxtrace} notoriously fails strict feasibility
\cite{noisy,coord, nath_sen}. In particular, for small $\sigma\geq 0$
the feasible region is very thin and the solution to the problem is
unstable. As a result, algorithms maintaining feasibility are likely
to exhibit some difficulties. In contrast, following the theme of this
paper, we employ an {\em infeasible} method, and hence the poor
conditioning of the underlying problem does not play a major role.
The least-squares level-set problem that corresponds to the
minimization formulation of \eqref{eqn:maxtrace} is
\begin{equation}\label{eqn:complex_bd}
  \begin{aligned}
    \begin{array}[t]{ll}
      \minim & \|\mathcal{P}_E\circ\mathcal{K}(X)-\omega\|
    \\\st & \tr X\geq \tau,\ Xe=0,\  X\succeq 0.
    \end{array}
  \end{aligned}
\end{equation}
Note the direction of the inequality $\tr X\geq \tau$, which takes
into account that the original formulation \eqref{eqn:maxtrace} is a
\emph{maximization} problem. As a result, the root-finding method on
the value function will approach the optimal value $\tau_*=\OPT$ from
the right. In particular, to initialize the approximate Newton scheme,
we need an upper bound $\tau_0$ on the objective function. Such upper
bounds are easily available from the diameter of the graph. See
Figure~\ref{fig:prob1_6_2} for an illustration.

\begin{figure}[t]
  \centering
  \centering
  \includegraphics[trim=0cm 1cm 0cm 0.5cm,scale=0.65]{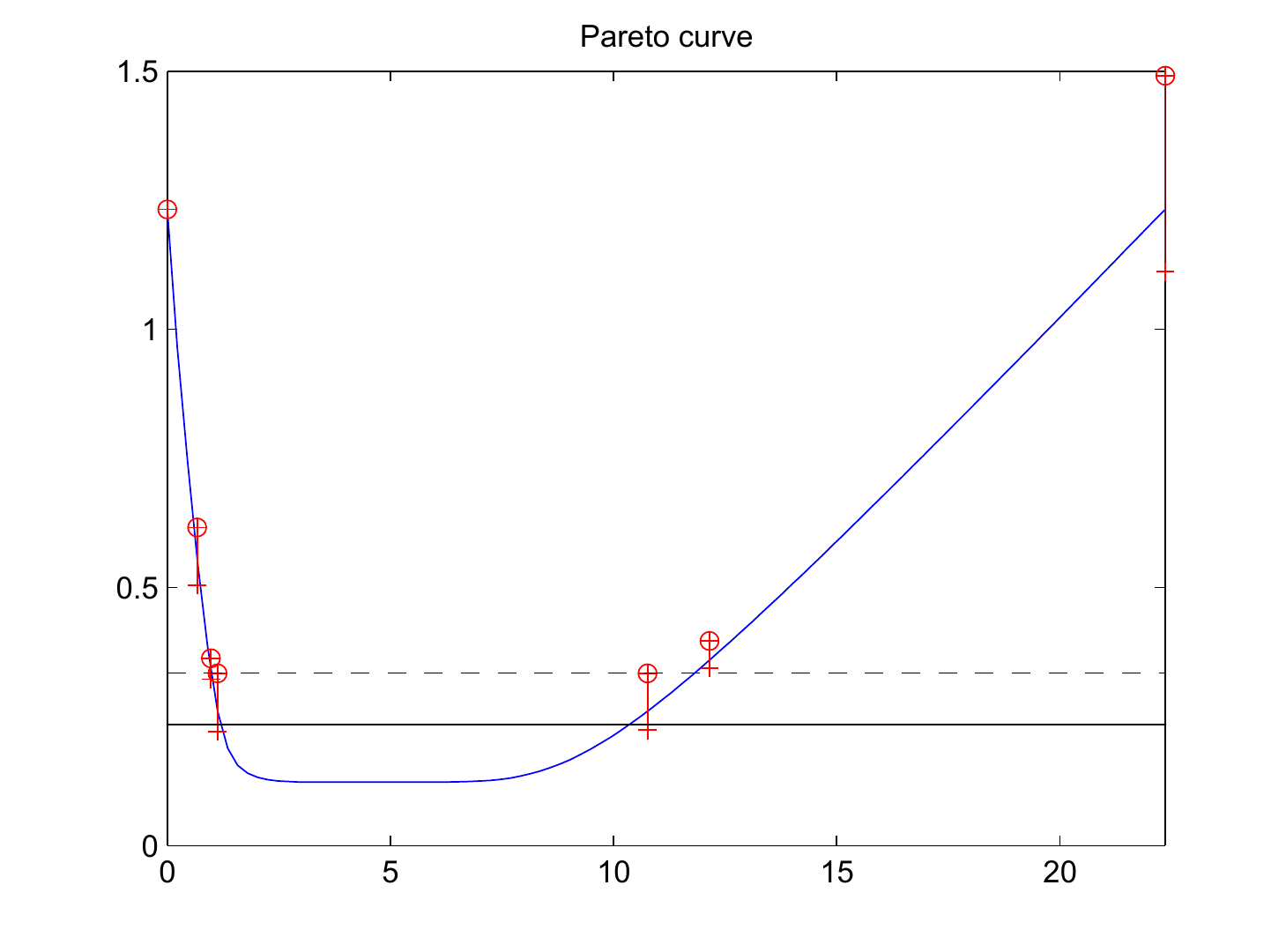}
  \caption{The value function
    $v(\tau):=\inf\set{\|\mathcal{P}_E\circ\mathcal{K}(X)-\omega\|}{
      \tr X= \tau,\ Xe=0,\ X\succeq 0}$.
    Newton's method converges to either the minimum- or maximum-trace
    solution, depending on if it is started with an iterate to the
    left of the minimal root, or to the right of the maximal
    root. For a solution of~\eqref{eqn:maxtrace}, we require the
    maximal root.  In this experiment, $\sigma =0.25$.}
  \label{fig:prob1_6_2}
\end{figure}

Note that the gradient of the
objective function is typically very sparse (as sparse as the edge set
$E$). Moreover, the linear subproblem over the feasible region is analogous to the ones considered in Section~\ref{subsec:low-rank-matrix}, requiring only a maximal eigenvalue computation on a sparse matrix (the gradient of the objective function); for more details see \cite{noisy}.
This makes the problem \eqref{eqn:complex_bd}
ideally suited for the Frank-Wolfe algorithm, as discussed in \S \ref{subsec:low-rank-matrix}.
We note that the dual problem of \eqref{eqn:complex_bd} takes the form
\[
  \maximize{y\in \bR^E,\ \|y\|_2\leq 1}
  \quad \langle y,\omega \rangle
  -2\tau\lambda^{e^{\perp}}\submax(\Diag(Ye)-Y).
\]
The matrix $Y=\mathcal{P}^*_E(y)$ is the vector $y$ padded with zeros
and then  $2(\Diag(Ye)-Y)=\mathcal{K}^*\mathcal{P}^*_E(y)$. The
symbol $\lambda^{e^{\perp}}\submax(A)$ is the maximal eigenvalue of the
restriction of the matrix $A$ to $e^{\perp}$. Hence, affine minorants are
immediate to read off from the dual certificates generated by the
Frank-Wolfe algorithm.
An extensive numerical investigation of this approach is made by
Drusvyatskiy et al.~\cite{noisy}.
\end{exa}

\subsection{Robust elastic net regularization}
\label{sec:mixed_norm}
In this final section, we explore an important data fitting problem
where the regularizer $\varphi$ is not a gauge, unlike our previous
examples. Zou and Hastie~\cite{EN_2005} introduced the {\em elastic
  net regularizer}
\[
\vpen(x):=\alf\onorm{x}+\frac{1-\alf}{2} \tnorm{x}^2\qquad (0\le \alf\le 1)
\]
for situations where there are multiple groups of covariates that are
strongly correlated within each group.  In this setting, the LASSO
typically picks one member from each of the most important groups
whereas the elastic net can pick out both the important groups and
their members.  As is the case with the Huber function, $\vpen$ is a
member of the PLQ family~\cite{JMLR:aravkin13a,
  AravkinBurkeFriedlander:2013}.

Zou and Hastie only consider the \LStau\ and \QPlam\ formulations of
the 1-norm regularized problem discussed in \S\ref{sec:intro}, but
with $\onorm{\cdot}$ replaced by $\vpen$. Furthermore, they focus on
the Lagrangian formulation \QPlam\ for computational reasons. The
problem corresponding to \BPsig\ is not investigated.  In this
section, we provide a guide to the implementation of the methods of
\S\ref{sec:alg_fram} for this version of the elastic net problem, but
generalized to the case where the residual term is replaced by the
Huber function $\rho_{\kappa}$ in~\eqref{quantileHuber} for robust
inference.
This gives the three formulations described in Table
\ref{tab:elastic-net}, which we call the \emph{robust elastic net
  problem}.

\begin{table}[tb]
\small
\centering
\begin{tabular}{lll}
\toprule
   \multicolumn{1}{c}{\Psig}
 & \multicolumn{1}{c}{\Qtau}
 & \multicolumn{1}{c}{Dual of \Qtau}
\\  \midrule
$\begin{array}[t]{@{}c@{\ }l}
         \displaystyle\min_{x} & \vpen(x) %
         \\\textrm{s.t.} & \rho_\kappa({Ax-b})\le\sigma %
        \end{array}$
& $\begin{array}[t]{c@{\;}l}
         \displaystyle\min_{x} & \rho_\kappa(Ax-b) %
       \\\textrm{s.t.} &  \vpen(x)\le\tau %
         \end{array}$
    &  $\begin{array}[t]{c@{\;}l@{}}
         \displaystyle\max_{y\in \kappa\bB_\infty} & \ip{b}{y}-\frac{\kappa}{2}\tnorm{y}^2
         \ -\del^{\star}_{[\vpen\le\tau]}(A^Ty) %
         \end{array}$
\\\bottomrule
\end{tabular}
\caption{Elastic net \label{tab:elastic-net}}
\end{table}

\subsubsection*{Inexact oracle for the value function}
From Table~\ref{tab:elastic-net}, we determine the value function
\[
v(\tau):= \min\set{\rho_\kappa(Ax-b)}{\vpen(x) \le\tau}
\]
to which we apply the root-finding procedure.  We solve \Qtau\ via an
optimal gradient-projection algorithm, as described in
\S\ref{sec:least-squares-level}. The methods require at each iteration
a projection onto the level sets $[\vpen\le \tau]$, which is given as
the solution of the problem
\begin{equation}\label{project en}
  \minimize{x} \quad \half \tnorm{x-z}^2\quad\st\quad \vpen(x)\le \tau.
\end{equation}

The projection problem can be solved as follows. Assume without loss
of generality $z\notin [\varphi_{en}\le \tau]$ since otherwise $x:=z$
solves \eqref{project en}. We may also assume---possibly after a
coordinate sign change---that $z\geq 0$. Observe then that any optimal
solution $x$ satisfies $x\geq 0$.  Thus, a feasible point $x$
solves~\eqref{project en} if and only if there exists a scalar
$\lam> 0$ that satisfies
\[
0  \in (x - z) + \lambda(1-\alpha) x + \lambda\alpha \partial \|\cdot\|_1 (x).
\]
Equivalently,
\[
z  \in (1 + \lambda(1-\alpha))x + \lambda\alpha \partial \|\cdot\|_1 (x) ,
\]
which amounts to the coordinate-wise inclusion 
\[
  z_i \in (1 + \lambda(1-\alpha))x_i + \lambda\alpha \partial |\cdot|
  (x_i) \quad\mbox{for each $i=1,\dots, n$}.
\]
In the case $x_i=0$, simple arithmetic shows
$(z_i - \lambda\alpha \sign{(z_i)})_+=0$.  Otherwise when $x_i\neq 0$,
the numbers $x_i$ and $z_i$ are both strictly positive, and
\begin{equation}\label{two conditions 1}
x_i = \frac{(z_i - \lambda\alpha \sign{(z_i)})_+}{1+\lambda(1-\alpha)} = \frac{(z_i - \lambda \alpha)_+}{1+\lambda(1-\alpha)}.
\end{equation}
Hence, regardless of whether $x_i$ is zero or not, \eqref{two conditions 1} holds for all
$i=1,\dots,n$.
Plugging this into the relation $\varphi_{en}(x)=\tau$ gives 
\begin{equation}\label{tau equal 1}
\begin{aligned}
\tau & = \alpha\sum_{i} \frac{(z_i - \lambda \alpha)_+}{1+\lambda(1-\alpha)} 
+ \frac{(1-\alpha)}{2} \sum_i \frac{(z_i - \lambda \alpha)_+^2}{(1+\lambda(1-\alpha))^2}\\
& = \frac{\alpha}{1+\lambda(1-\alpha)}\sum_{i} (z_i - \lambda \alpha)_+ 
+  \frac{(1-\alpha)}{2(1+\lambda(1-\alpha))^2} \sum_i (z_i - \lambda \alpha)_+^2.
\end{aligned} 
\end{equation}
The strong convexity of the objective in $x$ implies that there is a
unique positive $\lam$ that solves this equation. In addition, for
$\lam \ge \alf^{-1}\inorm{z}$, the right-hand side of \eqref{tau equal
  1} is zero, while for $\lam=0$, the right-hand side is
$\varphi(z)>\tau$.  So the unique optimal $ \lam$ resides in the open
interval $(0,\alf^{-1}\inorm{z})$.  Finally, since
$(1+\lambda(1-\alpha))>0$ for all $\lam\ge 0$, equation \eqref{tau
  equal 1} is equivalent to
\[
0=\tau(1+\lambda(1-\alpha))^2
- \alpha(1+\lambda(1-\alpha))\sum_{i} (z_i - \lambda \alpha)_+ 
- \frac{(1-\alpha)}{2} \sum_i (z_i - \lambda \alpha)_+^2.
\]
The root $\lambda$ is found by sorting coordinates of $z$ and then solving a
quadratic polynomial in $\lam$. Substituting $\lambda$ back into \eqref{two conditions 1}, we find the optimal $x$. 

\subsubsection*{Affine minorant oracle for the value function}

Following the approach of \S\ref{sec:lower minorants via duality}, for
each candidate value of $\tau$ in Algorithm \ref{algo:newton}, we
generate a dual certificate $y$ that yields a lower-bound on the value
function $v(\tau)$. Such dual iterates are generated automatically by
fast gradient methods on the primal problem~\cite{tseng_f_order}. To
obtain an affine minorant of $v$, we then need a method for evaluating
the function
\[
\Phi(y,\tau) :=\ip{b}{y}-\half\tnorm{y}^2-\del^{\star}_{[\vpen\le\tau]}(A^Ty),
\]
and a subgradient $s\in \partial_\tau \del^{\star}_{[\vpen\le\tau]}(A^Ty)$.
To this end, we use the representation
\[
\del^{\star}_{[\vpen\le \tau]}(z)=\inf_{\mu>0}[\tau\mu+\mu\varphi_{en}^{\star}(\mu^{-1}z)].
\]
See, for example, Aravkin et al.~\cite[Equation
6.5c]{AravkinBurkeFriedlander:2013}.  Since $\vpen$ is the sum of two
finite-valued convex functions, its conjugate is the infimal
convolution
\[
\vpen^{\star}(z)=\inf_{v\in\alf\bB_\infty}\frac{1}{2(1-\alf)}\tnorm{z-v}^2
=\frac{1}{2(1-\alf)}\di^2_{\alpha \bB_{\infty}}(z)=\frac{1}{2(1-\alf)}\tnorm{(\abs{z}-\alf e)_+}^2.
\]
 Hence, for $\mu>0$, we have
\begin{equation}\label{opt mu for ren}
\del^{\star}_{[\vpen\le\tau]}(z)=\inf_{\mu>0}~\left\{\tau\mu+\frac{1}{2(1-\alf)\mu}\tnorm{(\abs{z}-\mu \alf\e)_+}^2\right\},
\end{equation}
and the derivative of $\del^{\star}_{[\vpen\le\tau]}(z)$ with respect to $\tau$ is given by the optimal
$\mu$ when it exists.
Note that if $\mu\ge\alf^{-1}\inorm{z}$, then 
$[\tau\mu+\frac{1}{2(1-\alf)\mu}\tnorm{(\abs{z}-\mu \alf\e)_+}^2]=\tau\mu$, while for $\mu\to 0$
we have $[\tau\mu+\frac{1}{2(1-\alf)\mu}\tnorm{(\abs{z}-\alf\mu \e)_+}^2]\to +\infty$. Hence,
an optimal $\mu$ exists when $\tau>0$. It is also unique due to the convex piecewise quadratic
nature of the objective. Consequently, the optimal $\mu$ in \eqref{opt mu for ren} can be obtained by 
sorting $|z|$ and then writing in closed form the solution
of a sequence of elementary univariate convex functions over an interval.

\appendix
\section{Proofs}
\label{appendix:proofs}

\begin{theorem}[Superlinear convergence of Newton and secant methods]
\label{Superlinear}
Let $f\colon \Rn\to\bR$ be a non-increasing, convex function on the
interval $[a,b]$. Suppose that the point $\tau_*:=\inf\{\tau: f(\tau)\leq 0\}$
lies in $(a,b)$ and the non-degeneracy condition
$g_*:=\inf\set{g}{g\in\partial f(\tau_*)}<0$ holds. Fix two points
$\tau_{-1}, \tau_1\in (a,b)$ satisfying $\tau_0<\tau_1< \tau_*$ and consider the
following two iterations:
\begin{equation}
\label{Newton}
\tag{Newton}
\tau_{k+1}:=
\begin{cases}
\tau_k& \mbox{if $f(\tau_k)=0$,}\\
\tau_k-\frac{f(\tau_k)}{g_k}\qquad \mbox{[for $g_k\in\partial f(\tau_k)$]} & \mbox{otherwise;}
\end{cases}
\end{equation}
and
\begin{equation}
\label{Secant}
\tag{Secant}
\tau_{k+1}:=
\begin{cases}
\tau_k&\mbox{if $f(\tau_k)=0$,}\\
\tau_k-\frac{\tau_k-\tau_{k-1}}{f(\tau_k)-f(\tau_{k-1})}f(\tau_k)&\mbox{otherwise.}
\end{cases}
\quad
\end{equation}
If either sequence terminates finitely at some $\tau_k$, then it must be the case $\tau_k=\tau_*$.
If the sequence $\{\tau_k\}$ does not terminate finitely, then
$|\tau_*-\tau_{k+1}|\le (1-\frac{g_*}{\gam_k})|\tau_* -\tau_k|,\ k=1,2,\dots$,
where $\gam_k=g_k$ for the Newton sequence and 
$\gam_k$ is any element of $\partial f(\tau_{k-1})$ for the secant sequence.
In either case, $\gam_k\uparrow g_*$
and $\tau_k\uparrow \tau_*$ globally $q$-superlinearly.
\end{theorem}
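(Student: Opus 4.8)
The plan is to extract everything from three structural consequences of convexity: that $\tau_*$ is a genuine root with $f(\tau_*)=0$, that supporting lines (Newton) and chords (secant) of $f$ underestimate $f$ to the \emph{right} of their interpolation nodes, and that a subgradient selection is monotone and increases to $g_*$ as $\tau\uparrow\tau_*$. First I would record the preliminaries. Since $f$ is convex, non-increasing, and continuous on the interior of $[a,b]$, the definition of $\tau_*$ forces $f(\tau)>0$ for $\tau<\tau_*$ and $f(\tau_*)=0$; the non-degeneracy hypothesis identifies $g_*=f'_-(\tau_*)=\inf\partial f(\tau_*)<0$. Monotonicity of $\partial f$ then gives $g\le g_*<0$ for every $g\in\partial f(\tau)$ with $\tau<\tau_*$, so the slopes used by both methods are strictly negative, and any subgradient selection increases to $g_*$ as $\tau\uparrow\tau_*$.

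Next I would show the iterates are trapped in $(\tau_k,\tau_*]$. The Newton update is the root of the supporting line $L_k(\tau)=f(\tau_k)+g_k(\tau-\tau_k)\le f(\tau)$; as $L_k$ is strictly decreasing with $L_k(\tau_*)\le f(\tau_*)=0$, its root $\tau_{k+1}$ obeys $\tau_k<\tau_{k+1}\le\tau_*$. For the secant method the same conclusion follows from the chord through $(\tau_{k-1},f(\tau_{k-1}))$ and $(\tau_k,f(\tau_k))$, which for convex $f$ lies below the graph to the right of $\tau_k$ and is therefore $\le0$ at $\tau_*$; its slope $s_k$ is squeezed as $g_{k-1}\le s_k\le g_k<0$. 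Claim~1 is then immediate: if the recursion freezes at $\tau_k$ then $f(\tau_k)=0$, and with $\tau_k\le\tau_*$ and positivity of $f$ to the left of $\tau_*$ this forces $\tau_k=\tau_*$.

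For the contraction estimate I would use the single inequality $f(\tau_k)=f(\tau_k)-f(\tau_*)\ge g_*(\tau_k-\tau_*)=|g_*|(\tau_*-\tau_k)$, which is the subgradient inequality at $\tau_*$. Writing $e_k:=\tau_*-\tau_k\ge0$ and substituting into the Newton update gives $e_{k+1}=e_k-f(\tau_k)/|g_k|\le e_k(1-|g_*|/|g_k|)=(1-g_*/g_k)e_k$, the stated bound with $\gam_k=g_k$. The identical computation with $s_k$ in place of $g_k$ yields $e_{k+1}\le(1-g_*/s_k)e_k$ in the secant case; since $\gam_k\le s_k$ for any $\gam_k\in\partial f(\tau_{k-1})$ and all slopes are negative, $1/\gam_k\ge1/s_k$, whence $1-g_*/s_k\le1-g_*/\gam_k$ and the claimed bound follows. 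Both factors lie in $[0,1)$ because $\gam_k\le g_*<0$ forces $g_*/\gam_k\in(0,1]$.

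Finally, for global convergence and the rate I would argue by contradiction that the monotone bounded sequence $\tau_k\uparrow\bar\tau\le\tau_*$ satisfies $\bar\tau=\tau_*$: if $\bar\tau<\tau_*$ then $f(\bar\tau)>0$, while the slopes $g_k$ (resp.\ $s_k$) stay bounded since subgradients of a finite convex function are locally bounded in the interior, so the step $\tau_{k+1}-\tau_k=f(\tau_k)/|g_k|$ is bounded below by a positive constant---contradicting convergence. With $\tau_k\uparrow\tau_*$ and $\tau_{k-1}\uparrow\tau_*$, the selections obey $\gam_k\uparrow g_*$, so the contraction factor $1-g_*/\gam_k\to0$, and $e_{k+1}/e_k\le1-g_*/\gam_k$ delivers $q$-superlinear convergence valid for every $k$ (hence global). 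The main obstacle I anticipate is the secant analysis: keeping the convex geometry straight---the chord underestimates to the right while $s_k$ is squeezed between consecutive subgradients---and converting the $s_k$-contraction into the subgradient form $1-g_*/\gam_k$ with the correct inequality direction among negative quantities; the stalling-exclusion step likewise hinges on the standard but essential local boundedness of $\partial f$ in the interior.
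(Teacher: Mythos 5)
Your proposal is correct and follows essentially the same route as the paper's proof: the one-step contraction comes from the subgradient inequality $f(\tau_k)-f(\tau_*)\ge g_*(\tau_k-\tau_*)$ at the root combined with monotonicity of the subdifferential (which traps the iterates in $(\tau_k,\tau_*]$ and squeezes the secant slope between consecutive subgradients), and superlinearity follows from $\gam_k\uparrow g_*$. The only difference is cosmetic: you make explicit the stalling-exclusion step showing $\tau_k\to\tau_*$ (which the paper leaves implicit, since $1-g_*/\gam_k\le 1-g_*/\gam_1<1$ already forces linear convergence), but this is a detail, not a change of method.
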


\begin{proof}
Since $f$ is convex, the subdifferential $\partial f(\tau)$ is nonempty for all $\tau\in(a,b)$. The claim concerning finite termination is easy to deduce from convexity; we leave the details to the reader.
Suppose neither sequence terminates finitely at $\tau_*$.
Let us first consider the Newton iteration.
Convexity of $f$ immediately implies that the sequence $\tau_i$ is well-defined and satisfies $\tau_0<\tau_1<\tau_2<\dots< \tau_*$. Monotonicity of the subdifferential then implies $g_0\le g_1\le g_2\le \dots \le  g_*<0$.
Due to the inequalities $f(\tau_*)+\bar g(\tau_k-\tau_*)\le f(\tau_k)$ and $g_k<0$, we have
\[
\frac{f(\tau_k)-f(\tau_*)}{g_k}\le -\frac{g_*}{g_k}(\tau_*-\tau_k),
\]
and so
\[
0<\tau_*-\tau_{k+1}= \tau_*-\tau_k+\frac{f(\tau_k)-f(\tau_*)}{g_k}
\le \left(1-\frac{g_*}{g_k}\right)(\tau_*-\tau_k).
\]
Upper semi-continuity of $\partial f$ on its domain implies 
$g_k\uparrow g_*$. 
Hence $\tau_k$ converge $q$-superlinearly to $\tau_*$.

Now consider the secant iteration. As in the Newton iteration, it is immediate from convexity that the sequence $\tau_i$ is well-defined and satisfies $\tau_0<\tau_1<\tau_2<\dots<\tau_*$. Monotonicity of the subdifferential then implies $g_0\le g_1\le g_2\le \dots \le  g_*<0$.
We have \[0<  g_*(\tau_k- \tau_*)\le f(\tau_k)-f(\tau_*),\] and
$f(\tau_{k-1})+g_{k-1}(\tau_k-\tau_{k-1})\le f(\tau_k)$, 
and hence
\[
\frac{\tau_k-\tau_{k-1}}{f(\tau_k)-f(\tau_{k-1})}(f(\tau_k)-f(\tau_*))
\le\frac{f(\tau_k)-f(\tau_*)}{g_{k-1}}<0.
\]
Combining the two inequalities yields
\[
\frac{f(\tau_k)-f(\tau_*)}{f(\tau_k)-f(\tau_{k-1})}(\tau_k-\tau_{k-1})
\le\frac{f(\tau_k)-f(\tau_*)}{g_{k-1}}\le \frac{g_*}{g_{k-1}}(\tau_k-\tau_*)<0.
\]
Consequently, we deduce
\[
0<\tau_* -\tau_{k+1}=\tau_*-\tau_k+
\frac{f(\tau_k)-f(\tau_*)}{f(\tau_k)-f(\tau_{k-1})}(\tau_k-\tau_{k-1})
\le \left(1-\frac{g_*}{g_{k-1}}\right)(\tau_*-\tau_k).
\] The result follows.
\end{proof}

\begin{proof}[Proof of Theorem~\ref{thm:lin_conv_inex_imp}]
	It is easy to see by convexity that the iterates $\tau_k$ are strictly increasing and satisfy $f(\tau_k) >0$.
	For each index $j \geq 2$, define the following quantities:
	$$h_{j}:=\tau_j-\tau_{j-1},\quad \quad \theta_j:=\frac{s_{j}}{s_{j-1}},\quad \textrm{ and } \quad\gamma_j:=\frac{\ell_j}{\ell_{j-1}}.$$
	Note that using the equation $\tau_{j-1}-\tau_j = \frac{\ell_{j-1}}{s_{j-1}}$, we can write $\theta_j=\frac{u_{j-1} - \ell_j}{\ell_{j-1}}$.
	Clearly then the bound, $0\leq \theta_j\leq \alpha-\gamma_j$, is valid.
        Define now constants $\beta_j\in [0,1]$ by the equation $\gamma_j = \beta_j \alpha$.
        Suppose $k \geq 2$ is an index at which the algorithm has not terminated, i.e., $u_k > \epsilon$. 
	Taking into account the inequality $\ell_k \geq \frac{u_k}{\alpha}> \frac{\epsilon}{\alpha}$, we deduce
	\begin{equation} \label{eqn:esp_est1}
	\frac{\epsilon}{\alpha} \leq \ell_k = \ell_1 \prod_{j=2}^k \gamma_j \leq C \alpha^{k-1} \prod_{j=2}^k \beta_j .\
	\end{equation}
	The defining equation for $\tau_{k+1}$ and the definition of $\theta_j$ yield the equality
	\[ h_{k+1} = \frac{\ell_k}{|s_k|} = \frac{\ell_k}{|s_1|} \cdot\prod_{j=2}^k \theta_j^{-1}. \]
	The bounds $\tau_* - \tau_1 \geq h_{k+1}$, $\ell_k \geq \frac{\epsilon}{\alpha}$, and $\theta_j \leq \alpha - \gamma_j$ imply
	\[ \tau_*-\tau_1 \geq \frac{\ell_k}{|s_1|} \cdot\prod_{j=2}^k \theta_j^{-1} \geq \frac{\epsilon}{\alpha |s_1|} (\alpha^{-1})^{k-1} \prod_{j=2}^k (1-\beta_j)^{-1},\]
	and rearranging gives
	\begin{equation} \label{eqn:eps_est2}
	\epsilon \leq (\tau_*-\tau_1) | s_1 | \alpha^k \prod_{j=2}^k (1 - \beta_j) \leq C \alpha^k \prod_{j=2}^k (1 - \beta_j).
	\end{equation}
	Combining \eqref{eqn:esp_est1} and \eqref{eqn:eps_est2}, we get
	\begin{equation}\label{eqn:key_eq} 
	\epsilon \leq C \alpha^k \min \left\{ \prod_{j=2}^k \beta_j, \ \prod_{j=2}^k (1 - \beta_j) \right\}. 
	\end{equation}
	One the other hand, observe 
	$$\left(\prod_{j=2}^k \beta_j\right) \left(\prod_{j=2}^k ( 1- \beta_j)\right)= \prod_{j=2}^k \beta_j( 1- \beta_j) \leq 0.5^{2(k-1)},$$
	and hence
	\begin{equation} \label{eqn:product_est}
	\min \left\{ \prod_{j=2}^k \beta_j, \ \prod_{j=2}^k (1 - \beta_j) \right\} \leq 0.5^{k-1}.
	\end{equation}
	Combining equations \eqref{eqn:product_est} and \eqref{eqn:key_eq}, the claimed estimate
	$ k-1 \leq \log_{2/\alpha} \left( \frac{ \alpha C }{ \epsilon } \right) $
	follows.
	\end{proof}

\begin{proof}[Proof of Theorem~\ref{thm:conv_inex_newt}]
  The proof is identical to the proof of
  Theorem~\ref{thm:lin_conv_inex_imp}, except for some minor
  modifications. The
  only nontrivial change is how we arrive at the bound
  $\theta_j \leq \alpha - \gamma_j$.  For this, observe
  $\tau_{j-1} - \tau_j = \ell_{j-1}/s_{j-1}$, and because the function
  $\tau \mapsto \ell_j + s_j ( \tau - \tau_j)$ minorizes $v$, we see
	\begin{align*}
	u_{j-1} &\geq \ell_j + s_j( \tau_{j-1} - \tau_j ) = \ell_j + s_j \inpar{ \frac{\ell_{j-1}}{s_{j-1}} } = \ell_j + \theta_j \ell_{j-1}.
	\end{align*}
	After rearranging, we get the desired upper bound on $\theta_j$:
	\begin{align*}
	\theta_j &\leq \frac{u_{j-1} - \ell_j}{\ell_{j-1}} \leq \alpha - \gamma_j.
	\end{align*}
Finally, we remark that with the approximate Newton method, we can start indexing at $j = 0$ instead of $j = 1$.
This explains the different constants in the convergence result.
\end{proof}

\begin{lemma}[Concavity of the parametric support function]
  \label{lem:conc_supp}
  For any convex function $f\colon\Rn\to\overline{\bR}$ and vector
  $z\in\bR^n$, the univariate function
  $t\mapsto \delta^*_{[f\leq t]}(z)$ is concave.
\end{lemma}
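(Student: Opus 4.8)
The plan is to verify the defining inequality for concavity directly from $\delta^*_{[f\le t]}(z)=\sup\set{\ip z x}{f(x)\le t}$. Fixing $t_1,t_2\in\bR$ and $\lam\in[0,1]$ and writing $t:=\lam t_1+(1-\lam)t_2$, the goal is
\[
  \delta^*_{[f\le t]}(z)\ \ge\ \lam\,\delta^*_{[f\le t_1]}(z)+(1-\lam)\,\delta^*_{[f\le t_2]}(z).
\]
The entire argument rests on the fact that sublevel sets of a convex function respect convex combinations: if $f(x_1)\le t_1$ and $f(x_2)\le t_2$, then convexity of $f$ gives $f(\lam x_1+(1-\lam)x_2)\le\lam t_1+(1-\lam)t_2=t$, so the point $\lam x_1+(1-\lam)x_2$ lies in $[f\le t]$. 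Hence $\lam\ip z{x_1}+(1-\lam)\ip z{x_2}=\ip{z}{\lam x_1+(1-\lam)x_2}\le\delta^*_{[f\le t]}(z)$.

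The steps, in order, would be: (i) run this inclusion for arbitrary fixed $x_1\in[f\le t_1]$ and $x_2\in[f\le t_2]$; (ii) take the supremum over $x_1$, and then over $x_2$ — because the two feasible sets are independent and the weights $\lam,1-\lam$ are nonnegative, the two suprema separate additively and produce the displayed inequality. A tidier, coordinate-free variant I would also note uses the identity $[f\le t]=\set{x}{(x,t)\in\epi f}$, which recasts $\delta^*_{[f\le t]}(z)$ as the partial maximization $\sup_x\,[\,\ip z x-\delta_{\epi f}(x,t)\,]$; since $f$ is convex, $\epi f$ is convex, the integrand is jointly concave in $(x,t)$, and partial maximization of a concave function is concave \cite[Thm~5.7]{RTR}. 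This delivers the claim in one stroke, modulo the same boundary bookkeeping.

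The main obstacle, and the only part requiring genuine care, is the extended-real-valued bookkeeping rather than the algebra. I would phrase concavity in the hypograph sense so that the degenerate cases are handled automatically: when $[f\le t_1]$ or $[f\le t_2]$ is empty, the corresponding support value is $-\infty$ and the right-hand side is $-\infty$, making the inequality vacuous; when some support value equals $+\infty$, I would extract a sequence $x_1^n\in[f\le t_1]$ with $\ip z{x_1^n}\to+\infty$, pair it with any fixed feasible $x_2$, and drive the same inclusion argument to force $\delta^*_{[f\le t]}(z)=+\infty$. With these conventions settled, the finite case is exactly step (ii), so the write-up should foreground the conventions and keep the computation brief.
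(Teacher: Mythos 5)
Your argument is correct and is essentially the paper's proof: the elementwise step (convex combinations of points in the two sublevel sets land in the interpolated sublevel set) is exactly the paper's Minkowski-sum inclusion $\lambda\cdot[f\leq a]+(1-\lambda)\cdot[f\leq b]\subseteq[f\leq \lambda a+(1-\lambda)b]$, and separating the two suprema is exactly the support-function identity $\delta^*_{\lambda A+(1-\lambda)B}(z)=\lambda\delta^*_A(z)+(1-\lambda)\delta^*_B(z)$ combined with monotonicity under inclusion. The extended-real bookkeeping and the epigraph/partial-maximization variant are fine additions but not needed beyond what the paper records.
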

\begin{proof}
  Convexity of $f$ immediately yields the inclusion
  $$\lambda \cdot[f\leq a]+(1-\lambda) \cdot[f\leq b]\subseteq [f\leq
  \lambda a+(1-\lambda) b] \qquad\forall a,b\in\bR
  \text{ and } \lambda\in [0,1].$$
  We deduce
  $\lambda\cdot\delta^*_{[f\leq a]}(z)+(1-\lambda)
  \cdot\delta^*_{[f\leq b]}(z)=\delta^*_{\lambda \cdot[f\leq
    a]+(1-\lambda) \cdot[f\leq b]}(z)\leq \delta^*_{[f\leq \lambda
    a+(1-\lambda) b]}(z)$, and the result follows.
\end{proof}

\begin{proof}[Proof of Proposition~\ref{cl:ineq_quad}]
  For this proof only, let $\|\cdot\|$ denote the 2-norm.  Note the
  inclusion
  $s/\|y\|\in \partial_{\tau}\Phi_1\left(y/\|y\|,\,\tau\right)$. Use
  the same computation from~\eqref{eqn:lowe-min} to deduce that the
  affine function
  \[\tau'\mapsto (\hat\ell-\sigma)-\frac{s}{\|y\|}(\tau'-\tau)\]
  minorizes $f_1$.

  From the definition of $\hat\ell$, $\Phi_1$, and $\Phi_2$, it
  follows that
  \begin{equation}\label{eqn:equiv_pos}
    \begin{aligned}
        \frac{u-\sigma}{\hat{\ell}-\sigma}
      = \frac{(u-\sigma)\|y\|}{\Phi_2(y,\tau) + \frac{1}{2}\|y\|^2 - \sigma\|y\|} 
      = \frac{2(u-\sigma)\|y\|}{\ell^2 + \|y\|^2 - 2\sigma\|y\|}.
    \end{aligned}
  \end{equation}
  Taking into account the equivalence
  \[
    \frac{u-\sigma}{\ell-\sigma} \leq \alpha
    \quad\iff\quad
    \frac{u + (\alpha-1)\sigma}{\alpha} \leq \ell,
  \]
  we deduce
  \[
    \ell^2+\|y\|^2-2\sigma\|y\|
    \geq
    \alpha^{-2}
    \Big(
      (u+(\alpha-1)\sigma)^2+\|\alpha y\|^2-2\sigma\alpha\|\alpha y\|
    \Big)
    \geq 2\alpha^{-1}(u-\sigma)\|y\|,
  \]
  where the rightmost inequality follows from the computation
  \[
    \begin{aligned}
      (u + [\alpha- 1]\sigma)^2 &+ \|\alpha y\|^2 -
      2\alpha\sigma\|\alpha y\| - 2(u-\sigma)\|\alpha y\|
    \\& = \left(u + [\alpha-1]\sigma\right)^2 + \|\alpha y\|^2 -
      2\|\alpha y\|(u + [\alpha-1]\sigma)
    \\&= \left(u + [\alpha-1]\sigma - \|\alpha y\|\right)^2  \geq 0.
    \end{aligned}
  \]
  Because the right-hand side of~\eqref{eqn:equiv_pos} is
  non-negative, we can deduce that $\hat{\ell}\geq \sigma$. Finally,
  the required inequality $(u-\sigma)/(\hat{\ell}-\sigma)\leq \alpha$
  also follows from~\eqref{eqn:equiv_pos}.
\end{proof}

\begin{lemma}
	$(-\lambda\submin)^{\star}(y) = \delta_\cS(-y)$, where
	$\cS= \cK^{*} \cap \left\{ x \mid \ip e x = 1 \right\}$.
	\label{rem:conjugate_lamdbdaMin}
\end{lemma}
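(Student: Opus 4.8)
The plan is to exploit the positive homogeneity of $\lambda\submin$ to collapse the conjugate to an indicator function, and then to identify the effective domain of that indicator with $-\cS$ using only the definition of $\lambda\submin$ and the structure of $\cK$. First I would record that $\lambda\submin$ is positively homogeneous of degree one: for $t>0$, the identity $tx-\lambda e = t\bigl(x-(\lambda/t)e\bigr)$ together with $\cK$ being a cone gives $\lambda\submin(tx)=t\lambda\submin(x)$. Consequently $x\mapsto \ip{x}{y}+\lambda\submin(x)$ is positively homogeneous, so its supremum is either $0$ (attained at $x=0$) when the map is everywhere nonpositive, or $+\infty$ otherwise. Writing $w:=-y$, this yields
\[
(-\lambda\submin)^{\star}(y)=\sup_x\bigl\{\ip{x}{y}+\lambda\submin(x)\bigr\}=
\begin{cases}
0 & \text{if } \lambda\submin(x)\le \ip{x}{w} \text{ for all } x,\\
+\infty & \text{otherwise.}
\end{cases}
\]
Thus it suffices to show that $\lambda\submin(x)\le \ip{x}{w}$ for all $x$ is equivalent to $w\in\cS$, i.e. $w\in\cK^{*}$ and $\ip{e}{w}=1$.

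Next I would establish the elementary facts about $\lambda\submin$ that drive the argument. Since $\cK$ is closed and $e\in\interior\cK$ belongs to a pointed cone, the set $\{\lambda: x-\lambda e\in\cK\}$ is a nonempty interval of the form $(-\infty,\lambda\submin(x)]$: it is nonempty because $e$ is interior, downward closed because $(x-\lambda e)+(\lambda-\lambda')e\in\cK$ whenever $\lambda'<\lambda$, and bounded above because otherwise $-e=\lim_{\lambda\to\infty}\lambda^{-1}(x-\lambda e)\in\cK$ would contradict pointedness; by closedness the endpoint is attained, so $x-\lambda\submin(x)e\in\cK$. A direct evaluation gives $\lambda\submin(e)=1$ and $\lambda\submin(-e)=-1$, and $x\in\cK$ forces $\lambda\submin(x)\ge 0$ since $x-0\cdot e\in\cK$.

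With these in hand the equivalence is short. For the backward direction, if $w\in\cK^{*}$ and $\ip{e}{w}=1$, then pairing $w$ with $x-\lambda\submin(x)e\in\cK$ gives $0\le \ip{x}{w}-\lambda\submin(x)\ip{e}{w}=\ip{x}{w}-\lambda\submin(x)$. For the forward direction, applying $\lambda\submin(x)\le\ip{x}{w}$ to arbitrary $x\in\cK$ gives $\ip{x}{w}\ge\lambda\submin(x)\ge 0$, so $w\in\cK^{*}$; applying it at $x=e$ and $x=-e$ gives $1\le\ip{e}{w}$ and $\ip{e}{w}\le 1$, hence $\ip{e}{w}=1$. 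This proves $\{w:\lambda\submin(x)\le\ip{x}{w}\ \text{for all}\ x\}=\cS$, and since $w=-y$ we conclude $(-\lambda\submin)^{\star}(y)=\delta_{\cS}(-y)$.

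The main obstacle is the careful verification of the structural facts in the second paragraph, namely that the defining infimum is attained (so that $x-\lambda\submin(x)e\in\cK$, which powers the backward inclusion) and the exact values $\lambda\submin(\pm e)=\pm 1$. These rest on $\cK$ being closed and pointed with $e$ interior; once they are established, the conjugate computation is purely formal.
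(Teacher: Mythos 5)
Your proof is correct, and it takes a genuinely different route from the paper's. The paper starts from Renegar's formula for $\partial(-\lambda_{\min})(x)$, observes that $\range \partial(-\lambda_{\min})=-\cS$ and that $\ip{x}{y}=-\lambda_{\min}(x)$ for $y$ in the subdifferential, and then invokes the equality case of the Fenchel--Young inequality to get $(-\lambda_{\min})^{\star}=0$ on $-\cS$; the value $+\infty$ off $-\cS$ is obtained by evaluating the conjugate along the rays $te$ (when $\ip{y}{e}\neq -1$) and $tx$ for a suitable $x\in\cK$ (when $y\notin-\cK^{*}$) and letting $t\to\infty$. You instead exploit positive homogeneity of $\lambda_{\min}$ to collapse the conjugate to an indicator in one step, and then identify its domain with $-\cS$ using only three elementary facts: attainment $x-\lambda_{\min}(x)e\in\cK$ (which powers the inclusion $\cS\subseteq\{w:\lambda_{\min}(\cdot)\le\ip{\cdot}{w}\}$), the values $\lambda_{\min}(\pm e)=\pm1$, and nonnegativity of $\lambda_{\min}$ on $\cK$. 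Your version is more self-contained --- it avoids citing the subdifferential formula and the (asserted, not proved) identity $\range\partial(-\lambda_{\min})=-\cS$ --- and it makes explicit exactly where closedness and pointedness of $\cK$ and interiority of $e$ are used; the paper's version is shorter on the page because it outsources the structural work to \cite{ren_conic}. Both arguments are sound in the setting of a proper cone $\cK$ with $e\in\interior\cK$.
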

\begin{proof}
The following formula is established in \cite{ren_conic}:
\[
  \partial (-\lambda_{\min}) ( x )
  =
  \left\{ -y \mid \ip{y}{e} = 1, \
    \ip{ y}{ z - (x - \lambda_{\min}(x) e ) } \geq 0 \text{ for
      all } z \in \mathcal{K} \right\}
\]
or equivalently
\begin{align*}
\partial (-\lambda_{\min}) ( x ) 
&= \set{ -y }{ \ip{y}{e} = 1,  -y\in N_{\mathcal{K}}\left(x - \lambda_{\min}(x) e\right) }\\
&= \set{ -y}{ \ip{y}{e} = 1, \ y \in \mathcal{K}^*, \  0 = \lambda_{\min}(x) - \ip{y}{x} }.
\end{align*}
Here the symbol $N_{\mathcal{K}}$ denotes the normal cone to $\mathcal{K}$.
Now for any $y\in \partial (-\lambda_{\min}) ( x )$, we have 
$\langle x,y\rangle=-\lambda_{\min}(x)$. Observe $\range \partial (-\lambda_{\min})=-\mathcal{S}$. Hence by the equality in the Fenchel-Young inequality, for any $y\in -\mathcal{S}$, we have $(-\lambda_{\min})^{\star}(y)=0$.
On the other hand, for any $y$ with $\langle y,e\rangle\neq  -1$, we have
$(-\lambda_{\min})^{\star}(y)\geq \langle te,y\rangle -(-\lambda_{\min})(te)=t(\langle y,e\rangle+1)$ for any $t \geq 0$. Letting $t\to \infty$, we deduce $(-\lambda_{\min})^{\star}(y)=+\infty$. Similarly, consider $y\notin -\mathcal{K}^*$. Then we may find some $x\in \mathcal{K}$ satisfying $\langle x,y\rangle>0$. 
We deduce $(-\lambda_{\min})^{\star}(y)\geq \langle tx,y\rangle -(-\lambda_{\min})(tx)=t(\langle y,x\rangle-(-\lambda_{\min})(x))$ for any $t \geq 0$. Letting $t\to \infty$, we deduce $(-\lambda_{\min})^{\star}(y)=+\infty$. We deduce that $(-\lambda_{\min})^{\star}$ is the indicator function of $-\mathcal{S}$, as claimed.
\end{proof}

\begin{lemma} \label{lem:sum-gauges}
  Let $\cD_1$ and $\cD_2$ be two nonempty closed convex sets that
  contain the origin. Then
  $\gamma_{\cD_1} + \gamma_{\cD_2} =
  \gamma_{(\cD_1^\circ+\cD_2^\circ)^\circ}$.
  If additionally $0\in\interior{\cD_1}$, then
  $(\gamma_{\cD_1} + \gamma_{\cD_2})^\circ =
  \gamma_{\cD_1^\circ+\cD_2^\circ}.$
\end{lemma}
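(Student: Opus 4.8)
The plan is to reduce both identities to the interplay between gauges, support functions, and polar sets, together with the bipolar theorem. Throughout I will use two facts about a nonempty closed convex set $\cD$ containing the origin: its gauge is the support function of the polar, $\gamma_\cD = \delta^{\star}_{\cD^\circ}$, and the polar operation on gauges mirrors the polar operation on sets, $\gamma_\cD^\circ = \gamma_{\cD^\circ}$. Both follow from the identities $f^{\circ} = \delta^{\star}_{[f\le1]}$ and $f^{\star}=\delta_{[f^{\circ}\le1]}$ recorded in \S\ref{sec:nonsmooth_reg} (applied through $[\gamma_\cD\le1]=\cD$), combined with the bipolar theorem $\cD^{\circ\circ}=\cD$.

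For the first identity, I would write each summand as a support function and invoke additivity of support functions under Minkowski sums:
\begin{equation*}
\gamma_{\cD_1} + \gamma_{\cD_2} = \delta^{\star}_{\cD_1^\circ} + \delta^{\star}_{\cD_2^\circ} = \delta^{\star}_{\cD_1^\circ + \cD_2^\circ}.
\end{equation*}
Setting $E := \cD_1^\circ + \cD_2^\circ$, which is convex and contains the origin, the support function $\delta^{\star}_E$ is again the gauge of its polar, $\delta^{\star}_E = \gamma_{E^\circ}$, and this is precisely $\gamma_{(\cD_1^\circ + \cD_2^\circ)^\circ}$, yielding the claim. Note that closedness of $E$ is irrelevant here, since a convex set and its closure share the same support function and the same polar.

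For the second identity, I would take the gauge-polar of the first. Writing $\cK := (\cD_1^\circ + \cD_2^\circ)^\circ$, the first identity gives $\gamma_{\cD_1}+\gamma_{\cD_2} = \gamma_\cK$, and since $\cK$ is closed, convex, and contains the origin,
\begin{equation*}
(\gamma_{\cD_1}+\gamma_{\cD_2})^\circ = \gamma_\cK^\circ = \gamma_{\cK^\circ} = \gamma_{(\cD_1^\circ + \cD_2^\circ)^{\circ\circ}}.
\end{equation*}
By the bipolar theorem applied to the convex, origin-containing set $\cD_1^\circ + \cD_2^\circ$, the double polar equals its closure, so it remains only to check that $\cD_1^\circ + \cD_2^\circ$ is itself closed.

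This last point is where the extra hypothesis $0\in\interior{\cD_1}$ enters, and it is the only genuine obstacle. The assumption forces $\cD_1^\circ$ to be bounded (indeed $r\bB_2\subseteq\cD_1$ implies $\cD_1^\circ\subseteq r^{-1}\bB_2$), hence compact; and the Minkowski sum of a compact set and a closed set is closed, so $\overline{\cD_1^\circ + \cD_2^\circ} = \cD_1^\circ + \cD_2^\circ$ and therefore $(\cD_1^\circ + \cD_2^\circ)^{\circ\circ} = \cD_1^\circ + \cD_2^\circ$. I expect the remaining verifications (that a sum of gauges is again a gauge, and the cited gauge/support-function identities) to be immediate; the care is concentrated entirely in tracking which sets are closed so that the bipolar collapses, which is exactly what the interiority assumption secures.
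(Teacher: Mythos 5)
Your proposal is correct and follows essentially the same route as the paper's proof: express each gauge as the support function of its polar, use additivity of support functions over Minkowski sums, and convert back; then, for the polar identity, invoke the bipolar theorem and use the compactness of $\cD_1^\circ$ (forced by $0\in\interior\cD_1$) to conclude that $\cD_1^\circ+\cD_2^\circ$ is closed. The only differences are cosmetic — you spell out the compactness of $\cD_1^\circ$ directly via $r\bB_2\subseteq\cD_1\Rightarrow\cD_1^\circ\subseteq r^{-1}\bB_2$ where the paper cites Rockafellar, and you make explicit the observation that the support function and polar do not distinguish a set from its closure, which the paper also uses.
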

\begin{proof}
  Theorem~14.5 of \cite{RTR} contains most of the needed tools. In
  particular, the gauge of any closed convex function containing the
  origin is the support function of the polar. Thus,
  \[
    \gamma_{\cD_1} + \gamma_{\cD_2}
    = \delta^*_{\cD_1^\circ} + \delta^*_{\cD_2^\circ}.
  \]
  By \cite[Cor.~3.2.5]{hiriart-urruty01}, we have
  \[
    \delta^*_{\cD_1^\circ} + \delta^*_{\cD_2^\circ}
    = \delta^*_{\cl{\cD_1^\circ+\cD_2^\circ}}
    = \delta^*_{\cD_1^\circ+\cD_2^\circ},
  \]
  where the last equality holds because the support function does not
  distinguish a set from its closure. Again using the polarity
  correspondence between the gauge and support functions, we have
  $\gamma_{\cD_1} + \gamma_{\cD_2} =
  \gamma_{(\cD_1^\circ+\cD_2^\circ)^\circ}$,
  as required. We now prove the second part of the lemma. Use the
  first part of the result and \cite[Thm.~15.1]{RTR} to deduce that
  \begin{equation}\label{eq:1}
    (\gamma_{\cD_1}+\gamma_{\cD_2})^\circ
    = \gamma_{(\cD_1^\circ+\cD_2^\circ)^{\circ\circ}}.
  \end{equation}
  Because $0\in\interior{\cD_1}$, the set $\cD_1^\circ$ is compact
  \cite[Cor.~14.5]{RTR}, and because $\cD_2^\circ$ is closed,
  $\cD_1^\circ+\cD_2^\circ$ is also closed. Thus,
  $(\cD_1^\circ+\cD_2^\circ)^{\circ\circ}=\cD_1^\circ+\cD_2^\circ$. It
  then follows from~\eqref{eq:1} that
  $(\gamma_{\cD_1}+\gamma_{\cD_2})^\circ=\gamma_{(\cD_1^\circ+\cD_2^\circ)}$,
  as required.
\end{proof}

\begin{rem}[Projection onto a conic slice sets]\label{rem:proj_base}
This remark is standard. Fix a proper convex cone $\cK$ and consider the projection problem
\[
  \min_{x}\ \set{\half\|x-z\|^2}{\langle c,x\rangle=1,\, x\in
  \mathcal{K}}.
\]
Equivalently, we can consider the univariate concave maximization problem 
\begin{align*}
\max_{\beta}\min_{x\in \cK}\, L(x,\beta)&= \max_{\beta}\min_{x\in \cK}\,  \half\|x-z\|^2+\beta(\langle c,x\rangle-1)\\
&=\max_{\beta}\min_{x\in \cK}\, \half\|x-(z-\beta c)\|^2+\beta(\langle c,z\rangle -1)-\half\beta^2\|c\|^2\\
&=\max_{\beta}~ \half\di^2_{\mathcal{K}}(z-\beta c)+\beta(\langle c,z\rangle -1)-\half\beta^2\|c\|^2. 
\end{align*}
We can solve this problem for example by bisection, provided projections onto $\cK$ are available.
\end{rem}

\bibliographystyle{abbrvnat}
\bibliography{dima_bib,references_sasha}

\end{document}